\theoremstyle{plain}
\newtheorem{theorem}{Theorem}[section]
\newtheorem{prop}[theorem]{Proposition}
\newtheorem{lemma}[theorem]{Lemma}
\newtheorem{coro}[theorem]{Corollary}
\newtheorem{fact}[theorem]{Fact}
\theoremstyle{definition}
\newtheorem{defin}[theorem]{Definition}
\newtheorem{remark}[theorem]{Remark}
\newtheorem{example}[theorem]{Example}
\newcommand{\ts}{\hspace{0.5pt}}
\newcommand{\nts}{\hspace{-0.5pt}}
\newcommand{\CC}{\mathbb{C}\ts}
\newcommand{\RR}{\mathbb{R}\ts}
\newcommand{\ZZ}{{\ts \mathbb{Z}\ts}}
\newcommand{\TT}{\mathbb{T}}
\newcommand{\MM}{\mathbb{M}}
\newcommand{\NN}{\mathbb{N}}
\newcommand{\XX}{\mathbb{X}}
\newcommand{\YY}{\mathbb{Y}}
\newcommand{\cA}{\mathcal{A}}
\newcommand{\cB}{\mathcal{B}}
\newcommand{\cE}{\mathcal{E}}
\newcommand{\cH}{\mathcal{H}}
\newcommand{\cI}{\mathcal{I}}
\newcommand{\cO}{\mathcal{O}}
\newcommand{\cR}{\mathcal{R}}
\newcommand{\cT}{\mathcal{T}}
\newcommand{\vL}{\varLambda}
\newcommand{\vS}{\varSigma}
\newcommand{\vU}{\varUpsilon}
\newcommand{\aaa}{n_{a}}
\newcommand{\one}{\mathbbm{1}}
\newcommand{\ii}{\ts\mathrm{i}\ts}
\newcommand{\ee}{\mathrm{e}}
\newcommand{\dd}{\, \mathrm{d}}
\newcommand{\bs}{\boldsymbol}
\newcommand{\exend}{\hfill $\Diamond$}
\newcommand{\defeq}{\mathrel{\mathop:}=}
\newcommand{\myfrac}[2]{\frac{\raisebox{-2pt}{$#1$}}
      {\raisebox{0.5pt}{$#2$}}}
\DeclareMathOperator{\dens}{dens}
\DeclareMathOperator{\diag}{diag}
\DeclareMathOperator{\vol}{vol}
\DeclareMathOperator{\Mat}{Mat}
\DeclareMathOperator{\card}{card}
\DeclareMathOperator{\supp}{supp}
\begin{document}

\title[Pair correlations via renormalisation] {Renormalisation of pair
  correlation measures \\[1mm] for primitive inflation rules and 
  absence \\[1mm]   of absolutely continuous diffraction}

\author{Michael Baake}

\author{Franz G\"{a}hler}

\author{Neil Ma\~{n}ibo}
\address{Fakult\"at f\"ur Mathematik, Universit\"at Bielefeld, \newline
\hspace*{\parindent}Postfach 100131, 33501 Bielefeld, Germany}
\email{$\{$mbaake,gaehler,cmanibo$\}$@math.uni-bielefeld.de}

\begin{abstract}
  The pair correlations of primitive inflation rules are analysed via
  their exact renormalisation relations. We introduce the inflation
  displacement algebra that is generated by the Fourier matrix of the
  inflation and deduce various consequences of its structure.
  Moreover, we derive a sufficient criterion for the absence of
  absolutely continuous diffraction components, as well as a necessary
  criterion for its presence. This is achieved via estimates for the
  Lyapunov exponents of the Fourier matrix cocycle of the inflation
  rule. We also discuss some consequences for the spectral measures of
  such systems.  While we develop the theory first for the classic
  setting in one dimension, we also present its extension to primitive
  inflation rules in higher dimensions with finitely many prototiles
  up to translations.
\end{abstract}

\maketitle

\section{Introduction}

The spectral structure of substitution systems gives valuable insight
into such systems and their mutual relations. However, with our
present knowledge, it is still rather far to a classification in
sufficient generality. While the general Pisot substitution
conjecture, despite great progress in recent years (see \cite{Aki,KLS}
and references therein), is still open, the class of constant-length
substitutions is essentially understood, at least on an algorithmic
level. In fact, building on \cite{Q}, Bartlett \cite{Bart} presented a
general method how to determine the spectral measure of maximal type
computationally, for any given primitive constant-length
substitution. Remarkably, this approach also works in higher
dimensions, and various general results have been derived from it.

From the viewpoint of diffraction theory, it is also possible to
derive the spectral type, because the required spectral measures can
be realised as the restriction of certain diffraction measures to a
fundamental domain \cite{BLvE}. More generally, as long as one works
with systems with pure point spectrum, it does not matter whether one
considers the dynamical or the diffraction spectrum, as pure
pointedness of one implies the other, without restriction to
constant-length substitutions; see \cite{LMS,BL,BLvE}.

In general, the situation is less favourable when considering
substitutions that are not of constant length.  For instance, beyond
the much-studied Pisot substitution case, one additional topological
obstacle emerges when the substitution matrix has eigenvalues of
modulus $\lvert \lambda \rvert \geqslant 1$ other than the leading
one.  Here, it generally matters \cite{CS} whether one considers the
symbolic dynamical system, under the $\ZZ$-action of the shift, or the
geometric one, the latter defined via tiles (intervals) of natural
length and studied under the natural translation action of $\RR$. As
was realised and demonstrated in \cite{renex}, the geometric version
possesses an exact renormalisation identity for the pair correlation
measures of the system. This gives access to some spectral properties
that, to our knowledge, are presently not available on the symbolic
level. Substitutions of constant length are special in the sense that
the two viewpoints, symbolic and geometric, coincide, which ultimately
is the reason for their better accessibility.

In this paper, we develop the renormalisation approach for the
geometric setting of primitive substitutions in more generality,
building on previous work on several classes of examples
\cite{renex,Neil,BGM}. Moreover, we extend the approach to inflation
tilings of finite local complexity (FLC) in Euclidean spaces of
arbitrary dimension. In fact, we also take first steps to go beyond
the FLC case. We employ the diffraction theory approach and derive
consequences for the spectral measures where presently possible.  Our
particular interest is the derivation of sufficient criteria for the
absence of absolutely continuous diffraction and spectral
measures, as well as necessary criteria for their presence.
This is motivated by the rare occurrence of such components,
which to date is essentially limited to Rudin--Shapiro-type sequences
and their generalisations; compare \cite{NF-Hadamard,TAO,CG,CGS} and
references therein.\smallskip

It will be instrumental for our analysis that we formulate various
aspects on the symbolic level, while the core of our analysis rests on
the natural geometric realisation in order to profit from the inherent
self-similarity in the form of exact renormalisation relations; see
\cite{BS1,BS2} for related results on spectral measures of
one-dimensional systems via the analysis of matrix Riesz
products. Here, to make the distinction between the types of dynamical
systems as transparent as possible, we will speak of
\emph{substitution rules} on the symbolic side, but of \emph{inflation
  rules} on its geometric counterpart, thus following the notation and
terminology of \cite{TAO,TAO2}. For convenience, various results are
briefly recalled from there. Rather than repeating the proofs, we
provide precise references instead.

The paper is organised as follows. After recalling some general facts
about unbounded, but translation-bounded measures on $\RR$ and their
Fourier transforms in Section~\ref{sec:prelim}, we extend the notions
and results of \cite{renex} on the classic Fibonacci case to general
primitive inflation rules in Section~\ref{sec:general}. Here, we
introduce the \emph{inflation displacement algebra} and derive some of
its general properties in relation to the \emph{Fourier matrix} of the
inflation and its cocycle, which will later help to understand the
Lyapunov spectrum of this cocycle, in Section~\ref{sec:abelian}.  Some
additional material on the corresponding Kronecker product algebra
\cite{renex,BFGR} is gathered in an appendix. We then introduce the
\emph{pair correlation functions} and their renormalisation relations,
which are a consequence of the inflation structure. They lead to an
infinite-dimensional system \eqref{eq:RE1} of linear equations, with
nevertheless an essentially unique solution
(Theorem~\ref{thm:unique-one}).

A measure-theoretic reformulation leads to the \emph{pair correlation
  measures} and their Fourier transforms, with specific access to
their spectral components. In Theorem~\ref{thm:points}, the structure
of the pure point part is made explicit in analogy to the
Bombieri--Taylor approach, compare \cite{Lenz}, while the ensuing
analysis of the absolutely continuous parts forms the core of our
paper. Here, via the introduction of certain Lyapunov exponents for
the Fourier matrix cocycle, we derive an effective sufficient
criterion for the absence of absolutely continuous components in the
diffraction measure of the inflation system (Theorem~\ref{thm:1D}),
together with a general upper bound for the maximal exponent in
Theorem~\ref{thm:geq}. This also provides a necessary criterion for
the presence of absolutely continuous diffraction components in
Corollary~\ref{coro:ac-cond}.

In Section~\ref{sec:abelian}, we apply the general theory to the class
of Abelian bijective substitutions of constant length. With some input
from group representation and character theory, we can derive the
known absence \cite{Q,Bart} of absolutely continuous spectral
components, both in the diffraction and in the dynamical sense, in an
independent way in Theorems~\ref{thm:pos-abelian} and
\ref{thm:extend}. This is also illustrated with several examples,
which are selected to highlight the relations between substitutions,
Fourier matrices and Lyapunov exponents.

Finally, in Section~\ref{sec:higher}, we extend our approach to
inflation tilings in higher dimensions with finitely many prototiles
up to translations. As we shall briefly indicate, this already admits
the treatment of some tilings without finite local complexity
\cite{BG-block}.  The key observation for primitive inflation tilings
with finitely many translational prototiles is that the approach with
the Fourier matrices readily generalises and leads to the essential
separation of the geometric structure of the tiling in Euclidean space
from the combinatorial data of the inflation rule (and its geometry
via the pair correlation measures). This ultimately leads to a
criterion for the absence of absolutely continuous spectral components
in Theorem~\ref{thm:higher-D}, which is the extension of
Theorem~\ref{thm:1D} to this situation. We demonstrate the
effectiveness of our criterion by treating two examples, namely binary
block substitutions (in Section~\ref{sec:block}) and the planar
Godr\`{e}che--Lan\c{c}on--Billard tiling (in
Section~\ref{sec:LB}). The latter is a planar non-Pisot inflation
tiling, built with the Penrose rhombuses, and is shown, via the
aforementioned method, to have an essentially singular continuous
diffraction spectrum.

\section{Radon measures and Eberlein
  convolutions}\label{sec:prelim}

Here, we recall some notions and results on unbounded measures that we
shall need throughout.  A (complex) \emph{Radon measure} on $\RR^d$ is
a continuous linear functional on the space $C_{\mathsf{c}} (\RR)$ of
continuous functions with compact support, the latter equipped with
the inductive limit topology. By the general Riesz--Markov theorem,
Radon measures correspond to regular Borel measures on $\RR^d$, and we
shall use this connection several times. Note that these need not be
finite measures. If $\mu$ is a measure, its twisted counterpart
$\widetilde{\mu}$ is defined via
$\widetilde{\mu} (g) = \overline{\mu (\widetilde{g}\ts )}$ for
$g\in C_{\mathsf{c}} (\RR)$, where
$\widetilde{g} (x) \defeq \overline{g (-x)}$. Moreover, given a mapping
$f$ of $\RR^d$ into itself, the \emph{push-forward} of $\mu$, denoted
by $f \! . \mu$, is defined by
$\bigl( f \! .\mu\bigr) (g) \defeq \mu (g\circ \nts f)$ where
$g \in C_{\mathsf{c}} (\RR^d)$, often called \emph{test function} from
now on.

The convolution of two finite measures $\mu$ and $\nu$ is defined as
\[
    \bigl( \mu * \nu \bigr) (g) \, = 
    \int_{\RR^d} \int_{\RR^d} g (x+y) \dd \mu (x) \dd \nu (y) \ts .
\]
When $f$ is a \emph{linear} map on $\RR^d$, one has the relation
\[
     f \! . (\mu * \nu) \, = \, (f \! . \mu) * ( f \! . \nu) \ts ,
\]
as follows from a simple calculation; compare
\cite[Lemma~2.3]{BG-block}.  Let us fix an averaging sequence
$\cR = (R_n)^{}_{n\in\NN}$ of compact sets $R_n \subset \RR^d$ with
$R^{}_n \subset R^{\circ}_{n+1}$ and $\bigcup_{n\in\NN} R_n = \RR^d$.
We will assume throughout that $\cR$ is a van Hove sequence; see
\cite{TAO,BM} for details. Now, with $\mu|^{}_{R_n}$ denoting the
restriction of $\mu$ to $R_n$, the \emph{Eberlein} (or
volume-averaged) \emph{convolution} of two translation-bounded
measures, relative to $\cR$, is defined as
\[
    \mu \circledast \nu \, \defeq \, \lim_{n\to\infty}
    \frac{\mu |^{}_{R_n} \! * \nu |^{}_{R_n} }{\vol (R_n)} \ts ,
\]
provided the limit exists (we shall not consider any other
situation below). 

A Radon measure $\mu$ is called \emph{positive definite}, if
$\mu (g * \widetilde{g}\ts ) \geqslant 0 $ holds for all
$g\in C_{\mathsf{c}} (\RR^d)$.  A positive and positive definite
measure is automatically translation bounded \cite[Prop.~4.4]{BF}. An
important instance of this is the \emph{autocorrelation measure}
$\gamma$ of a translation-bounded measure $\omega$, defined as
\begin{equation}\label{eq:def-auto}
   \gamma \, = \, \gamma^{}_{\omega} \, \defeq \, \omega
   \circledast \widetilde{\omega} \ts .
\end{equation}
Provided the Eberlein convolution exists, which will be true in all
cases studied below, $\gamma$ is a positive definite measure, and in
many later situations, it is also a positive measure; see
\cite[Sec.~8.5]{TAO} and references therein for more.  For the study
of spectral properties, we are then interested in the Fourier
transform of $\gamma$, denoted as $\widehat{\gamma}$, which is known
as the \emph{diffraction measure} of $\omega$; see \cite{Hof,BL-rev}
as well as \cite[Ch.~9]{TAO} for general background.

There are several possibilities to define and analyse the Fourier
transform of a measure.  This is a non-trivial issue, see \cite{MS}
for a systematic exposition, and part of our later analysis will rely
on the existence of the Fourier transform.  We use a standard
version of the Fourier transform \cite{Rudin,BF} that, for integrable
functions on $\RR^d$ viewed as Radon--Nikodym densities, reads
\[
    \widehat{f} (k) \, = \int_{\RR^d} \ee^{-2 \pi \ii kx} 
     f(x) \dd x  \ts ,
   \]
   where $kx$ denotes the standard inner product between $k$ and $x$
   in $\RR^d$.  Any positive definite measure is Fourier
   transformable. The Fourier transform of a positive definite measure
   is a positive measure; see \cite[Ch.~I.4]{BF} or
   \cite[Sec.~8.6]{TAO} for details. Moreover, Fourier transform is
   continuous on the class of positive and positive definite measures.

\begin{lemma}\label{lem:transformable}
  Let\/ $\mu$, $\nu$ be translation-bounded measures such that\/
  $\mu\circledast \widetilde{\nu}$ as well as\/
  $\mu\circledast\widetilde{\mu}$ and\/
  $\nu\circledast\widetilde{\nu}$ exist, all with respect to the same
  averaging sequence\/ $\cR$. Then, $\mu\circledast \widetilde{\nu}$
  is a translation-bounded and transformable measure, as is\/
  $\widetilde{\mu}\circledast\nu$.
\end{lemma}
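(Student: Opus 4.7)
The plan is to reduce the cross-correlation $\mu \circledast \widetilde{\nu}$ to a complex linear combination of autocorrelations, to which the general principles already recalled in this section apply: every autocorrelation is positive definite, hence translation-bounded by \cite[Prop.~4.4]{BF}, and every positive definite measure is Fourier transformable.

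Concretely, I would first establish the polarisation identity
\[
    4 \, \mu \circledast \widetilde{\nu} \, = \,
    \sum_{k=0}^{3} \ii^{\ts k}\,
    \bigl(\mu + \ii^{\ts k}\nu\bigr) \circledast
    \widetilde{\bigl(\mu + \ii^{\ts k}\nu\bigr)} \ts ,
\]
by expanding each summand via the bilinearity of $*$, the linearity of the restriction $\,\cdot\,|^{}_{R_n}$ and of the volume averaging, together with the conjugate-linearity $\widetilde{c\ts \nu} = \overline{c}\ts \widetilde{\nu}$ of the twist. The expansion produces a fixed linear combination of the four Eberlein convolutions $\mu\circledast\widetilde{\mu}$, $\nu\circledast\widetilde{\nu}$, $\mu\circledast\widetilde{\nu}$ and $\nu\circledast\widetilde{\mu}$. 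The first three exist along $\cR$ by hypothesis; for the fourth, I would invoke that twist commutes with Eberlein convolution in the form $\widetilde{\alpha \circledast \beta} = \widetilde{\alpha}\circledast\widetilde{\beta}$, so that $\nu\circledast\widetilde{\mu} = \widetilde{\mu\circledast\widetilde{\nu}}$ exists along $\cR$ as soon as $\mu\circledast\widetilde{\nu}$ does.

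With this in place, each of the four autocorrelations $(\mu+\ii^{\ts k}\nu) \circledast \widetilde{(\mu+\ii^{\ts k}\nu)}$ is a bona fide Eberlein limit along $\cR$, positive definite by construction, hence translation-bounded and Fourier transformable by the recalled general facts. Both properties are preserved under finite complex linear combinations, so the polarisation identity delivers translation-boundedness and Fourier transformability of $\mu\circledast\widetilde{\nu}$. The parallel statement for $\widetilde{\mu}\circledast\nu$ then follows most efficiently from $\widetilde{\mu}\circledast\nu = \widetilde{\mu\circledast\widetilde{\nu}}$, together with the fact that the twist preserves both translation-boundedness and transformability. The main technical point is merely bookkeeping: one must track the complex signs contributed by $\widetilde{c\ts\nu} = \overline{c}\ts\widetilde{\nu}$ when expanding the twisted factors, and verify that the alternating sum on the right indeed isolates $\mu\circledast\widetilde{\nu}$ rather than, say, its companion $\nu\circledast\widetilde{\mu}$.
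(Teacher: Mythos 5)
Your proposal is correct and follows essentially the same route as the paper: the complex polarisation identity expresses $\mu\circledast\widetilde{\nu}$ as a fixed linear combination of four positive definite autocorrelations (each existing along $\cR$ by the hypotheses together with $\nu\circledast\widetilde{\mu}=\widetilde{\mu\circledast\widetilde{\nu}}$), whence transformability and translation boundedness follow, and the companion statement is obtained by applying the twist. The only small caveat is that translation boundedness of the four autocorrelations is better attributed to the translation boundedness of $\mu$ and $\nu$ (via the standard estimate for volume-averaged convolutions of translation-bounded measures) rather than to \cite[Prop.~4.4]{BF}, since that result requires positivity in addition to positive definiteness, and an autocorrelation of a complex measure need not be positive.
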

\begin{proof}
  Observe first that
  $\widetilde{\mu}\circledast\nu = \widetilde{\mu
    \circledast\widetilde{\nu}}$,
  wherefore it suffices to prove the claim for
  $\mu \circledast\widetilde{\nu}$. Now, as a variant of the (complex)
  polarisation identity, one verifies that
\begin{equation}\label{eq:polar}
   \mu\circledast \widetilde{\nu} \, = \, \myfrac{1}{4}
   \sum_{\ell=1}^{4} \ii^{\ell} (\mu + \ii^{\ell} \nu) \circledast
   (\mu + \ii^{\ell} \nu)\!\widetilde{\phantom{T}} ,
\end{equation}
where all measures on the right-hand side exist due to our
assumptions.  Consequently, $\mu\circledast \widetilde{\nu}$ is a
complex linear combination of four positive definite measures, each of
which is transformable. Moreover, due to our assumptions, these four
measures are translation bounded, so  $\mu\circledast
\widetilde{\nu}$ is translation bounded and transformable as well.
\end{proof}

If $\omega$ is a positive definite measure, $\widehat{\omega}$ is a
well-defined positive measure that has a unique Lebesgue decomposition
$\widehat{\omega} = \widehat{\omega}^{}_{\mathsf{pp}} +
\widehat{\omega}^{}_{\mathsf{cont}}$ into a pure point measure, with a
supporting set that is at most countable, and a continuous one.  On
the level of $\omega$ itself, this corresponds to the Eberlein
decomposition
$\omega = \omega^{}_{\mathsf{sap}} +
\omega^{}_{\text{$0$-}\mathsf{wap}}$ into a strongly almost periodic
measure, whose Fourier transform is
$\widehat{\omega}^{}_{\mathsf{pp}}$, and a null-weakly almost periodic
one; see \cite{MS,TAO2} for background.  Unfortunately, for the
further decomposition
$\widehat{\omega}^{}_{\mathsf{cont}} =
\widehat{\omega}^{}_{\mathsf{sc}} + \widehat{\omega}^{}_{\mathsf{ac}}$
into the singular continuous and absolutely continuous parts, no
general counterpart in the Eberlein decomposition is known at
present. However, some special cases have recently been analysed by
Strungaru \cite{S} that look promising.

\section{Primitive inflation rules in one
  dimension}\label{sec:general}

Let $\cA = \{ a^{}_{1}, \ldots , a^{}_{\aaa}\}$ be our alphabet with
$\aaa$ symbols or letters, and let
$\varrho \! : \, a_{i} \mapsto \varrho (a_{i})$ be a primitive
substitution rule with substitution matrix
$M_{\varrho} = (M_{ij})^{}_{1\leqslant i,j \leqslant \aaa}$, where
\[
    M_{ij} \, \defeq \, \text{number of letters of type $a_{i}$
                    in $\varrho (a_{j})$}
\]
as usual; compare \cite{Q,TAO}. We will also use the notation
$\varrho = \bigl(\varrho (a^{}_{1}), \ldots , \varrho (a^{}_{\aaa})
\bigr)$ for $\varrho$.  Let $\lambda^{}_{\mathrm{PF}}$ denote the
Perron--Frobenius (PF) eigenvalue of $M$, with the usual
interpretation that the corresponding right eigenvector, in
statistical normalisation, provides the relative frequencies of the
letters in a fixed point of $\varrho$ (or of a suitable power of it),
and that the left eigenvector contains the natural prototile lengths
(up to a common overall factor) for the corresponding geometric
inflation rule; see \cite[Ch.~4]{TAO} and references therein for
background. In short, each inflation step consists in first expanding
each tile by a factor of $\lambda$ and then dissecting it into tiles
of the original size, in the order specified by $\varrho$.  By slight
abuse of notation, we use the symbol $\varrho$ both for the (symbolic)
substitution rule and for its partner, the (geometric) inflation rule.

Starting from a fixed point tiling $\cT$ of $\RR$ under the inflation
rule (or one of its powers, if necessary), the corresponding compact
\emph{hull} is defined as
$\YY = \overline{ \{ t + \cT \mid t \in \RR \} }$, with the closure
being taken in the local topology. By standard results, see \cite{TAO}
and references therein, one obtains a topological dynamical system
$( \YY, \RR)$ that is strictly ergodic.\footnote{Note that there is
  another topological dynamical system, denoted by $(\XX, \ZZ)$, which
  emerges from the shift action on the \emph{symbolic} hull $\XX$, the
  latter obtained as the orbit closure of a symbolic fixed point of
  $\varrho$ or a suitable power of it; see \cite{TAO} for more. Also
  this system is strictly ergodic.} In other words, there is just one
way to put an invariant probability measure $\mu$ on it, which is the
one induced by the patch frequencies, and the resulting
measure{\ts}-theoretic dynamical system is denoted by
$(\YY, \RR, \mu)$. Given any element from $\YY$, which is a tiling of
$\RR$ by $\aaa$ (possibly coloured) intervals, there is a
corresponding Delone set $\vL$ obtained by taking the left endpoints
of all tiles of $\cT$. If the tile lengths are not distinct, we
distinguish them by colour, and do the same for the points. Then, one
has a unique decomposition $\vL = \bigcup_{i=1}^{\aaa} \vL_i$, so that
the (coloured) tiling $\cT$ and the (coloured) point set $\vL$ are
\emph{mutually locally derivable} from each other, or MLD for short;
see \cite{TAO} for the concept and more background. It is clear that
$\cT$ and $\vL$ define topologically conjugate dynamical systems on
the orbit closure, which we tacitly identify from now on for ease of
presentation.

\subsection{The inflation displacement algebra (IDA)}

Given $\varrho$, we now assume that we have chosen prototiles of
natural length, hence proportional to the entries of the left PF
eigenvector of $M_{\varrho}$. For standardisation, we shall often take
the shortest interval to have unit length. Now, define the
\emph{displacement matrix}
$T=(T_{ij})^{}_{1\leqslant i,j \leqslant \aaa}$ with set-valued
entries
\begin{equation}\label{eq:def-T}
     T_{ij} \, \defeq \, \{ \text{all relative positions of intervals
     of type $a_{i}$ in the patch $\varrho (a_{j})$} \} \ts .
\end{equation}
For simple examples of non-constant length, we refer to
 \cite[Sec.~4.2]{BFGR} and \cite[Sec.~3.1]{BGM}.
Here and below, relative positions are always defined via the left
endpoints of the tiles (intervals) or patches. Note that
$\varrho (a_{j})$ is a level{\ts}-$1$ supertile. With these
definitions, we have
$\card (T) \defeq \bigl( \card (T_{ij}) \bigr)_{1\leqslant i,j \leqslant
  \aaa} = M_{\varrho}$.
Let us also define the total set $S_{T} \defeq \bigcup_{i,j} T_{ij}$ of
all relative positions of prototiles in level{\ts}-$1$
supertiles. Since they are all non-negative by construction, we may
write $S_{T}$ as an ordered set,
\[
     S_{T} \, = \, \{ x^{}_{1}, \ldots , x^{}_{m} \} \ts ,
\]
with $0=x^{}_{1} < x^{}_{2} < \ldots < x^{}_{m}$ and some $m\in\NN$.

\begin{defin}\label{def:B-mat}
  Let $T_{ij}$ be the displacement sets from Eq.~\eqref{eq:def-T}.
  Then, for $k\in\RR$, the \emph{Fourier matrix}
  $B(k) = \bigl( B_{ij} (k) \bigr)_{1 \leqslant i,j \leqslant \aaa}$
  of $\varrho$ is defined by
\[
    B_{ij} (k) \, =  \sum_{t\in T_{ij}} \ee^{2\pi\ii tk} .
\]
\end{defin}  

Clearly, one has $\overline{B(k)} = B(-k)$ for all $k\in\RR$.  Note
that $B(0) = M_{\varrho}$, and that we have a decomposition of the
form
\begin{equation}\label{eq:def-digit}
   B(k) \, =  \sum_{x\in S_{T}} \ee^{2\pi\ii k x} D_{x}
\end{equation}
with integer matrices $D_{x}$ that satisfy
$\sum_{x\in S_{T}} D_{x} = M_{\varrho}$, as is clear from setting
$k=0$. Note that $D_{x} = D_{y}$ for $x\ne y$ is possible.  Since at
most one prototile of a patch can have its left endpoint at a given
position, it is clear that any $D_x$ can only have entries $0$ and
$1$, namely
\[
   D_{x,ij} \, = \, \begin{cases}
   1, & \text{if $\varrho (a_{j})$ contains a tile of type $a_{i}$
         at position $x$}, \\
   0, & \text{otherwise} . \end{cases}
\]
These matrices are a generalisation of what is known as \emph{digit
  matrices} in constant-length substitutions \cite{Vince,NF},
wherefore we adopt the name here as well; compare also
\cite[Ch.~VIII]{Q}, where they appear as \emph{instruction matrices}.

Let us next consider the $\CC$-algebra $\cB$ that is generated by the
one-parameter matrix family $\{ B(k) \mid k\in \RR \}$.  Since the
algebra $\cB$ is also a finite-dimensional vector space over $\CC$, it
is automatically closed (in any of the matrix norms, which are all
equivalent in this finite-dimensional setting).

\begin{fact}\label{fact:IDA}
  The\/ $\CC$-algebra\/ $\cB$ that is generated by the matrix family\/
  $\{ B(k) \mid k\in \RR \}$ equals the $\CC$-algebra\/ $\cB_{\nts D}$
  that is generated by the digit matrices\/ $\{ D_{x} \mid x\in
  S_{T}\}$.
\end{fact}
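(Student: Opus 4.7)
The plan is to prove the two inclusions separately, the forward one being immediate and the reverse one requiring a Vandermonde/linear independence argument.

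For the inclusion $\cB \subseteq \cB_{\nts D}$, I would simply invoke \eqref{eq:def-digit}: every $B(k)$ is, by definition, a $\CC$-linear combination of the digit matrices $D_{x}$ with $x \in S_{T}$. Since $\cB_{\nts D}$ is a $\CC$-algebra and hence in particular a $\CC$-vector space containing the $D_{x}$, it contains every $B(k)$, and therefore it contains the algebra $\cB$ they generate.

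For the reverse inclusion $\cB_{\nts D} \subseteq \cB$, the task is to show that each individual $D_{x_j}$ (with $x_j \in S_T = \{x_1, \ldots, x_m\}$) lies in $\cB$. For any choice of $m$ distinct real numbers $k_1, \ldots, k_m$, the relations \eqref{eq:def-digit} assemble into
\[
   \begin{pmatrix} B(k_1) \\ \vdots \\ B(k_m) \end{pmatrix}
   \, = \, V \begin{pmatrix} D_{x_1} \\ \vdots \\ D_{x_m} \end{pmatrix},
   \qquad V \, = \, \bigl( \ee^{2\pi\ii k_{\ell} x_{j}} \bigr)_{1\leqslant \ell,j \leqslant m} .
\]
If one can choose the $k_\ell$ so that $V \in \mathrm{GL}(m, \CC)$, then each $D_{x_j}$ is expressible as a $\CC$-linear combination of $B(k_1), \ldots, B(k_m)$, and hence lies in $\cB$, finishing the proof.

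The main (and only real) step is therefore producing such a choice of $k_\ell$. I would argue via the classical fact that the characters $k \mapsto \ee^{2\pi\ii k x_j}$ for distinct real exponents $x_1, \ldots, x_m$ are $\CC$-linearly independent as functions on $\RR$. Consequently, the determinant $\det V$, viewed as a real-analytic function of $(k_1, \ldots, k_m) \in \RR^m$, cannot vanish identically; for if it did, expansion along the first row would produce a non-trivial linear relation among the exponentials, contradicting linear independence of characters. Therefore the zero set of $\det V$ is a proper analytic subvariety of $\RR^m$ and, in particular, there exist $k_1, \ldots, k_m \in \RR$ with $\det V \neq 0$. Inverting $V$ then expresses the $D_{x_j}$ as explicit $\CC$-linear combinations of the $B(k_\ell)$, yielding $D_{x_j} \in \cB$ for each $j$ and thus $\cB_{\nts D} \subseteq \cB$. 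Combining both inclusions gives the claimed equality.
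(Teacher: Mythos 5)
Your proposal is correct and takes essentially the same route as the paper's proof: the inclusion $\cB \subseteq \cB_{\nts D}$ is read off from \eqref{eq:def-digit}, and the reverse inclusion is obtained by choosing $m$ values $k_{1},\ldots,k_{m}$ for which the matrix $\bigl(\ee^{2\pi\ii k_{\ell} x_{j}}\bigr)$ is invertible and solving the resulting linear system for the $D_{x}$. You merely make explicit (via linear independence of characters and the non-vanishing of the analytic function $\det V$) why such a choice of the $k_{\ell}$ exists, a point the paper asserts without elaboration, and your version correctly dispenses with the paper's case distinction on whether the $D_{x}$ are distinct, which is not actually needed for the argument.
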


\begin{proof}
  The inclusion $\cB \subseteq \cB_{\nts D}$ is immediate from
  Eq.~\eqref{eq:def-digit}. For the converse, let us first assume that
  the digit matrices $ D_{x}$ with $ x \in S^{}_{T}$ are distinct. Then,
  our claim follows from the observation that we can certainly choose
  $m=\card (S^{}_{T})$ distinct numbers $k\in\RR$, say
  $\{ k^{}_{1}, \ldots , k^{}_{m} \}$, such that the corresponding
  vectors
  $ \bigl(\ee^{2\pi\ii k^{}_{\nts \ell} x} \bigr)_{x\in S^{}_{T}}$ with
  $1 \leqslant \ell \leqslant m$ are linearly independent. This gives
  a set of equations of the form \eqref{eq:def-digit} that can now be
  solved for the matrices $D_{x}$, with $x\in S_{T}$, as linear
  combinations in $B(k^{}_{\ell})$.  Consequently,
  $\cB_{\nts D} \subseteq \cB$ and $\cB = \cB_{\nts D}$.

  If the digit matrices $D_{x}$ with $x\in S_{T}$ fail to be distinct,
  a smaller collection of numbers $k^{}_{\ell}$ suffices for an
  analogous argument.
\end{proof}

\begin{defin}
  The matrix algebra\/ $\cB$ of a primitive inflation rule\/ $\varrho$
  is called the \emph{inflation displacement algebra} (IDA) of
  $\varrho$.
\end{defin}

By construction, $\cB$ is a subalgebra of the full matrix algebra
$\mathrm{Mat} (\aaa,\CC)$.  Recall that $\cB$ is \emph{irreducible} if
the only subspaces of $\CC^{\aaa}$ that are invariant under the entire
algebra $\cB$ are the trivial subspaces, $\{ 0 \}$ and $\CC^{\aaa}$.
If there are others, $\cB$ is called \emph{reducible}. More generally,
a matrix family (finite or infinite) is called irreducible if only the
trivial subspaces are invariant, and reducible
otherwise.\footnote{This notion of irreducibility is to be
  distinguished from the one for non-negative matrices used elsewhere
  in this paper. Since this will always be clear from the context, we
  stick to the standard terminology.}  Note that a matrix family is
irreducible if and only if the algebra generated by it is.  We shall
later see various (classes of) examples.

\begin{remark}
  The commutant $\cB'$ of $\cB\subseteq\mathrm{Mat} (\aaa,\CC)$ is
  defined as
\[
    \cB' \, = \, \{ A \in \mathrm{Mat} (\aaa,\CC) \mid
    [A,B] = 0 \text{ for all } B\in \cB \}
\]
and is again a subalgebra of $\mathrm{Mat} (\aaa,\CC)$. By Schur's
lemma for the field $\CC$, we know that $\cB$ irreducible implies
$\cB' = \CC \one$, while the converse is generally false. If, however,
our IDA $\cB$ is closed under taking Hermitian conjugation (which
turns it into a finite-dimensional \mbox{$C^{*}$-algebra}), von Neumann's
bi-commutant theorem states that $\cB=\cB'' = \mathrm{Mat}(\aaa,\CC)$,
and irreducibility of $\cB$ follows. Unfortunately, the IDA rarely is
a $*$-algebra, so irreducibility has to be decided by other means.
However, in view of Burnside's theorem and the fact that our alphabet
has at least cardinality $2$, irreducibility in our situation is
\emph{equivalent} to showing that $\cB = \mathrm{Mat} (\aaa,\CC)$;
compare \cite{LR} and references therein.  \exend
\end{remark}

\begin{remark}
  When a constant-length substitution $\varrho$ is bijective, meaning
  that every column of the word vector
  $\bigl(\varrho (a_i ) \bigr)_{1\leqslant i \leqslant \aaa}$ is a
  permutation of the $\aaa$ letters, all digit matrices $D_x$ are
  permutation matrices. This permits to compute the dimension of
  $\cB = \cB^{}_{G}$ for some groups $G$ via the decomposition of the
  permutation representation $\varPhi$ and some character theory. In
  particular, when the group $G$ generated by the columns of $\varrho$
  is isomorphic to the full symmetric (or permutation)
  group\footnote{We use $\vS_n$ for the symmetric or permutation group
    of $n$ symbols.} $\vS_{\aaa}$, one has
\[
   \cB^{}_{\vS_{\aaa}}  \simeq \, \Mat (\aaa-1, \CC) \oplus \CC \ts .
\]
This follows from the fact that $\varPhi$ splits as the direct sum of
the trivial and the standard representation of $\vS^{}_{\aaa}$,
meaning $\varPhi = 1 \oplus U_{\mathsf{st}}$; compare \cite{JL}.

Furthermore, when $G \simeq A^{}_{\aaa}$, the subgroup of even
permutations, it can be shown that
$\cB^{}_{\! A_{\aaa}} \! \simeq \ts\cB^{}_{\vS_{\aaa}}\nts$.  This
follows from the fact that $U_{\mathsf{st}}$ does not split when
restricted to $A^{}_{\aaa}\nts$, because its character satisfies
$\chi^{}_{\mathsf{st}} (g) \ne 0$ for some
$g\in \vS^{}_{\aaa} \!\setminus A^{}_{\aaa}$, where $A^{}_{\aaa}$ is
an index-$2$ subgroup of $\vS^{}_{\nts \aaa}$; see
\cite[Prop.~20.13]{JL}.

A subgroup for which $U_{\mathsf{st}}$ does split is
$G \simeq D^{}_4 \subset \vS^{}_4$, where we have
$\dim (\cB^{}_{D_4}) =6$; see Example~\ref{ex:D4-IDA} below for a
substitution with such an IDA.  \exend
\end{remark}

Before we continue, we need some result on the relation between IDAs
for $\varrho$ and its powers. Assume that $\varrho$ is primitive, with
substitution matrix $M_{\varrho}$ and PF eigenvalue $\lambda$.  Let
$B(k)$ be the Fourier matrix from Definition~\ref{def:B-mat} for
$\varrho$, and denote the corresponding matrix for $\varrho^{n}$ by
$B^{(n)} (k)$, so $B^{(1)} (k) = B (k)$.  A simple calculation with
the displacements of $\varrho^2$ versus $\varrho$ shows that
$B^{(2)} (k) = B (k) \, B(\lambda \ts k)$ holds; we shall return to
this point in more generality and detail in
Section~\ref{sec:higher}. Inductively, one has
\[
   B^{(n+1)} (k) \, = \, B (k) \, B^{(n)} (\lambda \ts k)
\]
for any $n\in\NN$, and thus also the matrix Riesz product type
relation
\begin{equation}\label{eq:B-powers}
   B^{(n+1)} (k) \, = \, B (k) \, B(\lambda \ts k) 
    \cdots B (\lambda^{n} \ts k) \ts .
\end{equation}
Note that $B^{(n)} (k)$ defines a \emph{matrix cocycle} \cite{Viana}
over the
dilation dynamical system defined by $k \mapsto \lambda \ts k$ on
$\RR_{+}$, which will play a central role in our later spectral
analysis.  We summarise the relations as follows.

\begin{fact}\label{fact:F-matrix}
  Let\/ $B (k)$ be the Fourier matrix of\/ $\varrho$ from
  Definition~\textnormal{\ref{def:B-mat}}. Then, for arbitrary\/
  $n\in\NN$, the Fourier matrix of\/ $\varrho^n$ is given by\/
  $B^{(n)} (k) = B(k) \ts B (\lambda \ts k) \cdots B
  (\lambda^{n-1} k)$.  \qed
\end{fact}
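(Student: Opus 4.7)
The plan is to argue by induction on $n$, the base case $n = 1$ being tautological because $B^{(1)}(k) = B(k)$ by definition. The inductive step reduces to establishing the one-step recurrence
\begin{equation*}
   B^{(n+1)} (k) \, = \, B (k) \, B^{(n)} (\lambda \ts k) \ts ,
\end{equation*}
which is already anticipated in the discussion leading up to \eqref{eq:B-powers}; iterating it $n$ times yields the advertised product expansion. All the content therefore lies in proving this recurrence.

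I would prove the recurrence by a direct bookkeeping argument on the displacement sets. Writing $\varrho^{n+1} (a^{}_j) = \varrho \bigl( \varrho^{n} (a^{}_j) \bigr)$, the tiles of type $a^{}_i$ inside a level-$(n+1)$ supertile of type $a^{}_j$ are located in two stages: inside $\varrho^{n} (a^{}_j)$, an intermediate tile of type $a^{}_l$ sits at each relative position $t \in T^{(n)}_{lj}$; applying one further inflation step uniformly scales this position to $\lambda \ts t$ (the scaling that precedes dissection) and replaces that tile by the patch $\varrho (a^{}_l)$, within which tiles of type $a^{}_i$ occupy the offsets $s \in T^{}_{il}$. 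Thus
\begin{equation*}
   T^{(n+1)}_{ij} \, = \, \bigcup_{l=1}^{\aaa}
   \bigl\{ \lambda \ts t + s \mid t \in T^{(n)}_{lj}, \,
   s \in T^{}_{il} \bigr\} \ts ,
\end{equation*}
and substituting this into the definition of $B^{(n+1)}_{ij}(k)$ while factoring $\ee^{2\pi\ii (\lambda t + s) k}$ as a product of two exponentials separates the sums into
\begin{equation*}
   B^{(n+1)}_{ij} (k) \, = \sum_{l=1}^{\aaa} B^{}_{il} (k) \ts
    B^{(n)}_{lj} (\lambda \ts k) \ts ,
\end{equation*}
which is exactly the entrywise form of the claimed matrix identity.

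The sole point requiring care, and the only obstacle worth flagging, is to verify that the description of $T^{(n+1)}_{ij}$ above comes with no hidden multiplicities, i.e., distinct triples $(l, t, s)$ must yield distinct displacements $\lambda \ts t + s$. This follows from the fact that each tile in the patch $\varrho^{n+1} (a^{}_j)$ has a unique left endpoint, which determines the containing intermediate level-$1$ supertile (and hence the pair $(l,t)$) as well as the offset $s$ within it. Once this combinatorial injectivity is secured, the sum over triples coincides with the sum defining $B^{(n+1)}_{ij}(k)$, the recurrence follows, and Fact~\ref{fact:F-matrix} is obtained by induction.
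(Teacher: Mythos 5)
Your argument is correct and coincides with the paper's: the paper states the fact as a consequence of the recursion $B^{(n+1)}(k)=B(k)\,B^{(n)}(\lambda k)$, whose detailed justification is given (in the more general setting) via exactly your displacement-set decomposition $T^{(n+1)}_{ij}=\bigcup_{\ell}\bigl(T_{i\ell}+Q\ts T^{(n)}_{\ell j}\bigr)$ in Eq.~\eqref{eq:rel-displace} and Lemma~\ref{lem:gen-F-matrix}. Your additional remark on the absence of hidden multiplicities is a valid and worthwhile observation, resting on the same unique-covering-supertile property the paper invokes elsewhere.
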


Eq.~\eqref{eq:B-powers} has the following consequence for the IDA
of $\varrho^{n}$, denoted by $\cB^{(n)}$.

\begin{lemma}\label{lem:power-alg}
  Let\/ $\varrho$ be a primitive substitution over a finite alphabet
  with\/ $\aaa$ letters, and consider the corresponding inflation rule
  with $($fixed$\ts\ts )$ natural prototile lengths.  If\/
  $m,n \in \NN$ with\/ $m | n$, one has\/
  $\cB^{(n)}\nts \subseteq \cB^{(m)}$. In particular,
  $\cB^{(n)} \subseteq \cB^{(1)} = \cB$ for all\/ $n\in\NN$.

  Moreover, if there is a\/ $q\in\NN$ such that\/
  $\cB^{(n)} = \mathrm{Mat} (\aaa,\CC)$ holds for all\/ $n \geqslant q$,
  one has\/ $\cB^{(n)} = \mathrm{Mat} (\aaa,\CC)$ for all\/ $n\in\NN$.
\end{lemma}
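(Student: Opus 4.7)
The plan is to reduce both claims to \textbf{Fact~\ref{fact:F-matrix}}, but with the role of the base inflation played by $\varrho^m$ rather than $\varrho$. Observe that $\varrho^m$ is itself a primitive substitution, with substitution matrix $(M_{\varrho})^m$ and PF eigenvalue $\lambda^m$, and that its natural prototile lengths are the same as those of $\varrho$ (since the prototile labels and their lengths are unchanged under iteration). Thus Fact~\ref{fact:F-matrix}, applied to $\varrho^m$ in place of $\varrho$, is available.

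For the first claim, write $n = m\ts r$ with $r\in\NN$, so that $\varrho^n = (\varrho^m)^r$. Applying Fact~\ref{fact:F-matrix} to $\varrho^m$ then yields the identity
\[
    B^{(n)} (k) \, = \, B^{(m)} (k) \, B^{(m)} (\lambda^m k) \, \cdots \,
    B^{(m)} \bigl( \lambda^{m(r-1)} k\bigr) ,
\]
valid for all $k\in\RR$. Each factor on the right-hand side belongs to $\cB^{(m)}$ by definition, and so does the product. Hence every generator $B^{(n)} (k)$ of $\cB^{(n)}$ lies in $\cB^{(m)}$, which gives $\cB^{(n)} \subseteq \cB^{(m)}$. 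Specialising to $m=1$ yields the second assertion of the first part.

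For the second claim, fix an arbitrary $n\in\NN$ and choose $N\in\NN$ with $n \mid N$ and $N \geqslant q$ (for instance, $N = n \lceil q/n \rceil$ works). By the first part, applied with $(m,n)$ replaced by $(n,N)$, we have $\cB^{(N)} \subseteq \cB^{(n)}$. But $\cB^{(N)} = \mathrm{Mat} (\aaa,\CC)$ by the hypothesis, and $\cB^{(n)}$ is, by construction, a subalgebra of $\mathrm{Mat} (\aaa,\CC)$; so the two sandwich forces $\cB^{(n)} = \mathrm{Mat} (\aaa,\CC)$.

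The main conceptual point — and the only place where one might slip — is getting the direction of the inclusion right: passing to a higher power \emph{shrinks} the algebra, because $B^{(n)}(k)$ can be expressed as a product of Fourier matrices of $\varrho^m$ when $m \mid n$, not the other way round. Once this is in hand, both statements follow by elementary algebraic manipulation, and no further input (such as irreducibility, primitivity beyond what gives us the cocycle relation, or properties of $\lambda$) is needed.
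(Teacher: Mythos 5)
Your proof is correct and follows essentially the same route as the paper: both rest on regrouping the cocycle product so that $B^{(n)}(k)$ is written as a product of Fourier matrices $B^{(m)}(\lambda^{jm}k)$ of $\varrho^m$, which immediately gives $\cB^{(n)}\subseteq\cB^{(m)}$, and both deduce the second claim by choosing a suitable multiple of $n$ that exceeds $q$ (the paper takes $q!$ once and for all, you take a multiple per $n$ — an immaterial difference).
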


\begin{proof}
  Let $\lambda$ be the PF eigenvalue of $M_{\varrho}$, which implies
  that $\lambda^{n}$ is the corresponding one of $M_{\varrho^{n}}$.
  The first claim is trivial for $n=m$, so let $n>m$ and set
  $\ell = n/m$, which is an integer $\geqslant 2$ due to our
  assumptions.  As a direct consequence of Eq.~\eqref{eq:B-powers},
  one now derives
\[
   B^{(n)} (k) \, = \,  B^{(\ell\ts m)} (k) \, = \,
   B^{(m)} (k)\, B^{(m)} (\lambda^{m}\ts k) 
    \cdots B^{(m)} (\lambda^{(\ell - 1) m}\ts k) \ts .
\]
This relation entails that the Fourier matrices of $\varrho^{\ts n}$
are elements of $\cB^{(m)}$, hence $\cB^{(n)} \subseteq \cB^{(m)}$ as
claimed.

Now, if $\cB^{(n)} = \mathrm{Mat} (\aaa,\CC)$ holds for all\/
$n \geqslant q$, we may choose $n^{\prime} \defeq q !$, so
$m | n^{\prime}$ holds for all $1 \leqslant m \leqslant q$. The second
assertion then is a consequence of the first.
\end{proof}

The result of Lemma~\ref{lem:power-alg} looks odd at first sight, as
one might expect the IDA $\cB^{(n)}$ to be independent of
$n$. However, this is generally not the case, as the next example
demonstrates.

\begin{example}\label{ex:D4-IDA}
Consider the alphabet $\cA = \{ a,b,c,d \}$ and the constant-length
substitution $\varrho$ defined by
\[
    \mbox{ \Large $\left[\begin{smallmatrix}
    a \\ b \\ c \\ d \end{smallmatrix}\right]$}
    \quad\stackrel{\varrho}{\longmapsto}\quad
    \mbox{\Large $\left[\begin{smallmatrix} 
    ad \\ b\ts c \\ da \\ c\ts b 
    \end{smallmatrix}\right]$}
    \quad\stackrel{\varrho}{\longmapsto}\quad
    \mbox{\Large $\left[\begin{smallmatrix} 
    adcb \\ bcda \\ cbad \\ dabc
    \end{smallmatrix}\right]$}\vspace{3pt}
\]
where we also wrote the second iteration. Now, in the first step, the
columns display the letter permutations $(c\ts d)$ and $(a d b c)$,
which multiplicatively generate a group isomorphic with $D_{4}$. Thus,
the corresponding IDA is $6$-dimensional, and isomorphic with
$\CC \oplus \CC \oplus \Mat (2,\CC)$. In the second step, the columns
show the (non-trivial) letter permutations $(ad)(b\ts c)$,
$(a c)(b d)$ and $(a b)(c d)$, which only generate Klein's $4$-group,
$C_{2} \times C_{2}$, which is Abelian. Here, the IDA is then
$4$-dimensional, and isomorphic to
$\CC \oplus \CC \oplus \CC \oplus \CC$. More generally,
$\cB^{(2n)} = \cB^{(2)}$ and $\cB^{(2n+1)}=\cB^{(1)}$ for all
$n\in\NN$.  \exend
\end{example}

We say that $\varrho$ admits a \emph{substitutional root} if a
substitution $\sigma$ exists such that $\varrho = \sigma^{n}$ for some
$n\geqslant 2$. A necessary condition for this to happen is that the
corresponding substitution matrices satisfy
$M^{}_{\nts \varrho} = M^{n}_{\sigma}$. This condition is not
sufficient, as one can see from $\varrho = (aba,ab)$, which has the
square root $(ab,a)$, versus $\varrho' = (aab,ab)$, which has no
root. Nevertheless, $M_{\varrho} = M_{\varrho'}$, and the two
substitutions even generate the same hull. The following consequence
of Lemma~\ref{lem:power-alg} is immediate.

\begin{coro}\label{coro:roots}
  Let\/ $\varrho$ be a primitive inflation rule with irreducible
  IDA. If\/ $\sigma$ is a substitutional root of\/ $\varrho$, the IDA
  of\/ $\sigma$, when realised with the matching tile lengths, is
  irreducible as well.  \qed
\end{coro}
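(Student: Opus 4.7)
The plan is to combine Lemma~\ref{lem:power-alg} with the elementary fact that irreducibility of a matrix algebra is inherited by any overalgebra containing it. Concretely, I would show that the IDA of $\varrho$ sits inside the IDA of $\sigma$, after which any $\cB^{}_{\sigma}$-invariant subspace of $\CC^{\aaa}$ is automatically $\cB^{}_{\varrho}$-invariant and therefore trivial.

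First I would fix the geometric set-up. Let $\mu$ denote the PF eigenvalue of $M^{}_{\sigma}$, so that the PF eigenvalue of $M^{}_{\varrho} = M^{n}_{\sigma}$ equals $\lambda = \mu^{n}$. Choosing the prototile lengths for $\sigma$ proportional to the left PF eigenvector of $M^{}_{\sigma}$ simultaneously equips $\varrho$ with its natural tile lengths, because $M^{}_{\varrho}$ and $M^{}_{\sigma}$ share their left PF eigenvector up to normalisation. This is the content of the phrase \emph{realised with the matching tile lengths} in the statement, and it is exactly what is needed for Fact~\ref{fact:F-matrix} to apply to $\sigma$ and yield
\begin{equation*}
   B^{}_{\varrho}(k) \, = \, B^{(n)}_{\sigma}(k) \, = \,
   B^{}_{\sigma}(k) \, B^{}_{\sigma}(\mu \ts k) \, \cdots \,
   B^{}_{\sigma}(\mu^{n-1} k)
\end{equation*}
for every $k\in\RR$. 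Hence every Fourier matrix of $\varrho$ is a finite product of elements of $\cB^{}_{\sigma}$, and passing to the algebras they generate gives the inclusion $\cB^{}_{\varrho} \subseteq \cB^{}_{\sigma}$. This is precisely the special case $m=1$, $n=n$ of Lemma~\ref{lem:power-alg}, applied to $\sigma$ in place of $\varrho$.

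To finish, I would invoke the standard observation that if $\cA^{}_{1} \subseteq \cA^{}_{2}$ are two subalgebras of $\Mat(\aaa,\CC)$ and $\cA^{}_{1}$ is irreducible, then so is $\cA^{}_{2}$. Indeed, any subspace of $\CC^{\aaa}$ invariant under all of $\cA^{}_{2}$ is, a fortiori, invariant under $\cA^{}_{1}$, and hence is either $\{0\}$ or $\CC^{\aaa}$ by the irreducibility assumption on $\cA^{}_{1}$. Applied to $\cA^{}_{1} = \cB^{}_{\varrho}$ and $\cA^{}_{2} = \cB^{}_{\sigma}$, this yields the irreducibility of $\cB^{}_{\sigma}$.

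I do not foresee any substantive obstacle. The only point meriting explicit care is the compatibility of natural prototile lengths between $\sigma$ and $\varrho$ discussed above; without it, Fact~\ref{fact:F-matrix} would not give the composition formula in the form required for Lemma~\ref{lem:power-alg}. Once that compatibility is recorded, the corollary reduces to a one-line application of the lemma followed by the purely algebraic fact about inherited irreducibility.
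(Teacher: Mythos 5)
Your argument is correct and is exactly the route the paper intends: the corollary is stated as an immediate consequence of Lemma~\ref{lem:power-alg}, namely $\cB^{}_{\varrho} = \cB^{(n)}_{\sigma} \subseteq \cB^{}_{\sigma}$ once the tile lengths are matched via the common left PF eigenvector, after which irreducibility passes upward to the larger algebra. Your explicit care about the compatibility of the natural prototile lengths and the resulting cocycle identity from Fact~\ref{fact:F-matrix} is a faithful unpacking of what the paper leaves implicit.
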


Let us next state one general sufficient criterion for the
irreducibility of an IDA.

\begin{prop}\label{prop:irreducible}
  Let\/ $\varrho$ be a primitive substitution over a finite alphabet
  with\/ $\aaa\geqslant 2$ letters.  If the natural prototile lengths
  are distinct, the IDA of\/ $\varrho$ is\/
  $\cB = \mathrm{Mat}(\aaa,\CC)$ and hence irreducible.
\end{prop}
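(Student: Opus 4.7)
The plan is to invoke Burnside's theorem, recalled in the preceding Remark: since $\aaa\geqslant 2$, the identity $\cB=\mathrm{Mat}(\aaa,\CC)$ is equivalent to irreducibility of $\cB$ on $\CC^{\aaa}$, and I will establish it by showing that every matrix unit $E_{ij}$ belongs to $\cB$. Since $\cB^{(n)}\subseteq\cB$ for every $n\in\NN$ by Lemma~\ref{lem:power-alg}, I may replace $\varrho$ by a sufficiently high power of itself. The prototile lengths are unchanged, labelled $\ell_{1}<\ell_{2}<\cdots<\ell_{\aaa}$, and I may simultaneously arrange that $M_{\varrho}$ has strictly positive entries and that the Perron--Frobenius eigenvalue $\lambda$ is as large as needed.

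\textbf{One matrix unit.} The rightmost tile position in $\varrho(a_{\aaa})$ is at least $\lambda\ell_{\aaa}-\ell_{\aaa}$, while every tile position in $\varrho(a_{j})$ with $j<\aaa$ is at most $\lambda\ell_{\aaa-1}-\ell_{1}$. Choosing $\lambda$ large enough that $\lambda(\ell_{\aaa}-\ell_{\aaa-1})>\ell_{\aaa}-\ell_{1}$, the rightmost position $x^{*}$ of $\varrho(a_{\aaa})$ lies in no $T_{ij}$ with $j<\aaa$, so that $D_{x^{*}}$ has only its column $\aaa$ non-zero, equal to $e_{i^{*}}$ where $a_{i^{*}}$ is the last tile of $\varrho(a_{\aaa})$. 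Thus $E_{i^{*},\aaa}=D_{x^{*}}\in\cB$.

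\textbf{Two linear-independence statements.} I now claim: (A) the $\aaa$ functions $k\mapsto B(k)_{p,i^{*}}$ are linearly independent, and (B) the $\aaa$ functions $k\mapsto B(k)_{\aaa,q}$ are linearly independent. In each case, a vanishing $\CC$-linear combination expands, by linear independence of the characters $k\mapsto\ee^{2\pi\ii tk}$ for distinct $t$, into linear equations on the coefficients indexed by positions $t$. In (A) each such equation involves a single coefficient $c_{p(t)}$, since $\varrho(a_{i^{*}})$ carries a unique tile type at each of its positions; positivity of $M_{\varrho}$ then forces all $c_{p}=0$. In (B) the equation at position $t$ reads $\sum_{q':\,t\in T_{\aaa,q'}}c_{q'}=0$ and may involve several $q'$. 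To handle this, let $r_{q}=\lambda\ell_{q}-\ell_{\aaa}-s_{q}$ denote the rightmost position of $a_{\aaa}$ in $\varrho(a_{q})$, where $s_{q}$ is the length of the suffix of $\varrho(a_{q})$ beyond its last $a_{\aaa}$. An induction through the last-letter map of $\varrho$ (invoking primitivity) shows that the suffix lengths $s_{q}$ are bounded by a universal constant independent of the iteration, so that for $\lambda$ large enough we have strict monotonicity $r_{1}<r_{2}<\cdots<r_{\aaa}$. This forces $r_{q}\in T_{\aaa,q'}$ only for $q'\geqslant q$, and reading the equations at $t=r_{\aaa},r_{\aaa-1},\dots,r_{1}$ in this order successively yields $c_{\aaa}=c_{\aaa-1}=\cdots=c_{1}=0$.

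\textbf{From one to all.} With $E_{i^{*},\aaa}\in\cB$ and $B(k)\in\cB$ for every $k$, the product $E_{i^{*},\aaa}B(k)=\sum_{q}B(k)_{\aaa,q}E_{i^{*},q}$ lies in $\cB$. Statement (B) permits the isolation of each $E_{i^{*},q}$ by a suitable finite $\CC$-linear combination over $k$, so $E_{i^{*},q}\in\cB$ for every $q$. An analogous calculation $B(k)E_{i^{*},q}=\sum_{p}B(k)_{p,i^{*}}E_{p,q}\in\cB$, combined with (A), then gives every $E_{p,q}\in\cB$. This completes the verification that $\cB=\mathrm{Mat}(\aaa,\CC)$. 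The main technical obstacle is assertion (B): one must establish the uniform boundedness of the suffix lengths $s_{q}$ under iteration of $\varrho$, from which the monotonicity $r_{1}<r_{2}<\cdots<r_{\aaa}$ follows.
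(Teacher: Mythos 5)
Your proof is correct, but it takes a genuinely different route from the paper's. The paper also passes to a high power of $\varrho$, but then works entirely within the digit matrices: it invokes linear repetitivity (every legal patch of length at least some $\zeta$ contains every letter) so that the overhang of each level{\ts}-$1$ supertile beyond the next shorter one contains all prototiles, whence the digit matrices supported on successive overhang regions produce, column by column and after subtracting previously obtained matrix units, all of the $E_{ij}$ directly. You instead extract only the single matrix unit $E_{i^{*},\aaa}$ from the rightmost digit matrix of the longest supertile and then propagate it through the algebra by multiplying with the Fourier matrices $B(k)$; this is legitimate because $D_{x^*}$ and all $B(k)$ lie in $\cB^{(n)}$, and $\cB^{(n)}\subseteq\cB$ by Lemma~\ref{lem:power-alg} already suffices to conclude (the paper's appeal to the second assertion of that lemma is likewise not strictly needed at this point). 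Your statement (A) is immediate from the disjointness of the sets $T_{p,i^{*}}$ over $p$ together with positivity of $M_{\varrho^{n}}$; statement (B) is where the real work sits, and your substitute for the paper's repetitivity input --- the uniform boundedness of the suffix lengths $s_{q}$ via the eventually periodic last-letter map, valid once $M_{\varrho^{n}}$ is positive --- is correct and gives the triangular structure $r_{1}<\cdots<r_{\aaa}$ needed to solve the coefficient equations successively. Both arguments are sound and both use the hypothesis of distinct lengths in an essential way (the paper through $\triangle_{\min}$, you through the separation of the $x^{*}$ position and of the $r_{q}$); the paper's version is somewhat more uniform, obtaining all matrix units in one inductive sweep from repetitivity alone, while yours trades that geometric input for the algebraic propagation step and the last-letter analysis, which you correctly single out as the main point requiring detail.
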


\begin{proof}
  If $\varrho$ is primitive, we know that the hull defined by it is
  minimal, and each element of it is linearly repetitive; see
  \cite{TAO} and references therein for background. In particular,
  there is a number $\zeta > 0$ such that every (connected) legal
  patch of length $\geqslant \zeta$ contains at least one copy of 
  each prototile. Denote the natural prototile lengths by
  $\ell^{}_{1}, \ldots , \ell^{}_{\aaa}$, where we may assume that the
  letters of the alphabet are ordered such that
  $\ell^{}_{1} > \ell^{}_{2} > \ldots > \ell^{}_{\aaa} > 0$.

  Define
  $\triangle_{\min} = \min \{\ell^{}_{1} - \ell^{}_{2}, \ell^{}_{2} -
  \ell^{}_{3}, \ldots , \ell^{}_{\aaa} - \ell^{}_{\aaa-1} \}$.
  Then, we pick an integer $q$ such that
  $\lambda^{q} \triangle_{\min} > \zeta$, with $\lambda$ the PF
  eigenvalue of $M_{\varrho}$ as before, and consider
  $\varrho^{\ts q}$. This power of $\varrho$ now has the property that
  the corresponding level{\ts}-$1$ supertile of type $i$, which is the
  patch $\varrho^{\ts q} (a_{i})$, is longer than that of type $i+1$
  by more than $\zeta$, and this holds for all
  $1 \leqslant i \leqslant \aaa-1$. If we draw the level{\ts}-$1$
  supertiles in a stack on top of each other, with coinciding left
  endpoints, we see that each supertile now has an `overhang' of
  length $>\zeta$ over the next one below it. We can now determine the
  digit matrices $D^{(q)}_{x}$ of $\varrho^{\ts q}$ as follows.

  For each $1\leqslant i \leqslant \aaa$, by our above repetitivity
  argument, at least one $x\in T^{(q)}_{i,1}$ exists with
  $x > \lambda^{q}\ts \ell^{}_{2}$, and the corresponding digit matrix
  is $D^{(q)}_{x} = E^{}_{i,1}$, the standard elementary
  matrix. Consequently, all $E^{}_{i,1}$ are in $\cB^{(q)}$.  Next,
  for each $1\leqslant i \leqslant \aaa$, there exists at least one
  $x\in T^{(q)}_{i,2}$ with $x > \lambda^{q} \ell^{}_{3}$, and we have
  $\bigl( D^{(q)}_{x}\bigr)_{i,2} = 1$.  Here, we do not know whether
  the first column of $D_{x}^{(q)}$ contains only zeros, as there
  could be some coincidences between $\varrho(a^{}_{1})$ and
  $\varrho(a^{}_{2})$. However, if
  $\bigl( D^{(q)}_{x}\bigr)_{j,1} = 1$ for some $j$, we may form
  differences with the elementary matrix $E^{}_{j,1}$, which we
  already know to be in $\cB^{(q)}$. So, also all $E^{}_{i,2}$ are in
  $\cB^{(q)}$.

  Proceeding inductively in the row number, we see (after finitely
  many steps) that all matrices $E^{}_{i,j}$ must be in $\cB^{(q)}$,
  wherefore we get $\cB^{(q)} = \mathrm{Mat} (\aaa,\CC)$. Since our
  argument with the sufficiently long overhangs applies to all powers
  $\varrho^{q^{\ts \prime}}$ with $q^{\ts \prime} \geqslant q$, the
  second assertion of Lemma~\ref{lem:power-alg} implies that
  $\cB = \mathrm{Mat} (\aaa,\CC)$ as well, which proves the main
  claim.

  Since we are working over $\CC^{\aaa}$ with $\aaa\geqslant 2$,
  irreducibility follows from Burnside's theorem, as $\CC$ is
  algebraically closed; see \cite{Lang,LR}.
\end{proof}

In view of Eq.~\eqref{eq:def-digit} and the ensuing discussion of
$\cB$ versus $\cB_{D}$, the following is immediate.

\begin{coro}
  Let\/ $\varrho$ be a primitive substitution over a finite alphabet
  with irreducible IDA\/ $\cB$. Then, for each\/ $\varepsilon > 0$,
  the complex algebra generated by the matrices\/
  $\{ B (k) \mid 0 \leqslant k < \varepsilon \}$ is again\/ $\cB$, and
  hence irreducible as well. Moreover, even the finite matrix family\/
  $\{ B(k) \mid k\in J \}$ is irreducible, provided that\/ $J$
  contains at least\/ $r = \lvert S_{T}\rvert$ distinct values of\/
  $k$ that are rationally independent.  \qed
\end{coro}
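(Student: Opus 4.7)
The plan is to refine the Vandermonde-type inversion argument from the proof of Fact~\ref{fact:IDA}. Enumerate $S_T = \{ x^{}_1, \ldots, x^{}_m \}$ with $m = r = \card(S_T)$, and for any $m$-tuple $(k^{}_1, \ldots, k^{}_m) \in \RR^m$ introduce the $m \times m$ matrix $V$ with entries $V_{\ell j} = \ee^{2\pi\ii\, k^{}_\ell x^{}_j}$. Eq.~\eqref{eq:def-digit} then reads $B(k^{}_\ell) = \sum_{j=1}^m V_{\ell j}\, D_{x^{}_j}$ for each $\ell$. Whenever $\det V \neq 0$, this linear system inverts and expresses each digit matrix $D_{x^{}_j}$ as a $\CC$-combination of $B(k^{}_1), \ldots, B(k^{}_m)$; by Fact~\ref{fact:IDA} the algebra generated by these matrices then equals $\cB$, hence is irreducible by hypothesis. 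Both assertions of the corollary thus reduce to producing a non-singular $V$ inside the prescribed subset of $\RR^m$, while the degenerate case of coinciding digit matrices collapses the system to one of smaller effective rank and is handled by the size-reduction argument at the end of the proof of Fact~\ref{fact:IDA}.

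For the first assertion, I would view $\det V$ as a real-analytic (in fact entire) function of $(k^{}_1, \ldots, k^{}_m) \in \RR^m$. By Fact~\ref{fact:IDA} there exists some tuple at which this determinant is non-zero, so the function is not identically zero. The identity theorem for real-analytic functions then forces its zero set to have empty interior in $\RR^m$, and in particular the open cube $[0, \varepsilon)^m$ contains tuples on which $\det V \neq 0$, for every $\varepsilon > 0$. Picking such a tuple yields matrices $B(k^{}_\ell) \in \{ B(k) \mid 0 \leqslant k < \varepsilon \}$ that already generate $\cB$, whence the first statement follows.

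For the second assertion, I would expand the determinant by permutations to obtain
\[
    \det V \, = \sum_{\sigma \in \vS^{}_{m}} \mathrm{sgn}(\sigma)\,
    \exp \biggl( 2 \pi \ii \sum_{\ell=1}^m k^{}_\ell\, x^{}_{\sigma(\ell)} \biggr) .
\]
Distinctness of the $x^{}_j$ forces the $m!$ formal linear forms $\sum_\ell k^{}_\ell\, x^{}_{\sigma(\ell)}$ in the variables $k^{}_\ell$ to be pairwise distinct, and rational independence of $k^{}_1, \ldots, k^{}_m$ is meant to rule out the residual Diophantine coincidences of type $(k^{}_\ell - k^{}_{\ell'})\, x^{}_j \in \ZZ$ that would permit the $m!$ exponential terms to cancel. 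The main obstacle is turning this heuristic into a clean non-vanishing argument: it amounts either to a Kronecker--Weyl-type density statement on the ambient torus $(\RR/\ZZ)^{m!}$, or to showing that the precise notion of rational independence in use entails $\QQ$-linear independence of the finite set $\{\sum_\ell k^{}_\ell\, x^{}_{\sigma(\ell)} \mid \sigma \in \vS^{}_m\} \subset \RR$. This is where one has to be careful about the arithmetic interaction between the $k^{}_\ell$ and the tile-position data $x^{}_j$, and it is the step I would scrutinise most carefully before writing up the formal proof.
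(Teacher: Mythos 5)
Your argument for the first assertion is correct and follows the route the paper intends: the corollary carries no separate proof, being declared immediate from Eq.~\eqref{eq:def-digit} and the inversion argument in the proof of Fact~\ref{fact:IDA}, and your real-analyticity observation (the entire function $\det V$ of $(k_1,\ldots,k_m)$ is not identically zero, hence its zero set has empty interior, hence $[0,\varepsilon)^m$ contains a tuple with $\det V \ne 0$) is exactly the detail needed to make that inversion work inside an arbitrarily small interval. The reduction to the case of coinciding digit matrices is also handled correctly.

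The second assertion is where your proposal is incomplete, and your instinct to scrutinise this step is justified: the lemma you would need --- that rational independence of $k_1,\ldots,k_m$ forces $\det V\ne 0$ --- is false as stated. Already for $m=2$ one computes $\det V=\ee^{2\pi\ii(k_1x_1+k_2x_2)}\bigl(1-\ee^{2\pi\ii(k_1-k_2)(x_2-x_1)}\bigr)$, which vanishes exactly when $(k_1-k_2)(x_2-x_1)\in\ZZ$. Taking $x_1=0$ and $x_2=1$, as for a constant-length rule, and $k_1=\sqrt{2}+1$, $k_2=\sqrt{2}$, the set $\{k_1,k_2\}$ is linearly independent over $\QQ$, yet $B(k_1)=B(k_2)$ by $1$-periodicity, so this family generates only the commutative algebra $\CC[B(k_1)]$ and cannot recover $\cB=\Mat(\aaa,\CC)$. (Whether this also falsifies the corollary's second clause depends on whether a substitution of this shape with irreducible IDA exists; it certainly breaks the proposed proof.) Consequently no Kronecker--Weyl density argument on the torus can close the gap under the literal hypothesis: one must either strengthen it, say to $\QQ$-linear independence of $\{1,k_1,\ldots,k_m\}$ together with arithmetic control over the positions $x_j$ --- and even then, for $m\geqslant 3$ the distinctness of the $m!$ exponents does not by itself prevent the signed exponential sum from vanishing, since only genuine Vandermonde tuples $k_\ell=\ell\ts t$ give a product factorisation --- or retreat to the almost-everywhere statement that your first argument already provides. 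Since the paper offers no argument for this clause either, flagging it as the weak point is the right conclusion; as written, your proposal does not establish the second assertion.
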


The point here is that, if $\cB$ is irreducible, no matrix family
$\{ B (k) \mid 0 \leqslant k < \varepsilon \}$ with $\varepsilon > 0$
can possess a non-trivial invariant subspace. This can be viewed as a
first step towards establishing a stronger irreducibility notion, as
needed for a version of Furstenberg's theorem to represent extremal
Lyapunov exponents; compare \cite[Ch.~6]{Viana} and
\cite[Thm.~2.3]{David}.

Note that the IDA of a primitive inflation rule $\varrho$ is
\emph{not} an MLD invariant, see \cite[Sec.~5.2]{TAO} for background,
and neither is its irreducibility. Since the latter is an important
tool, we illustrate this phenomenon with a paradigmatic example.

\begin{example}\label{ex:TM}
The classic \mbox{Thue{\ts}--Morse} (TM) rule
\[
     \varrho^{}_{\mathrm{TM}} : \quad  1 \mapsto  1 \bar{1}
     \; , \quad \bar{1} \mapsto \bar{1}  1
\]
defines a substitution of constant length over the binary alphabet
$\{ 1,\bar{1}\}$, which can thus also be read as an inflation rule for
two prototiles of unit length; compare \cite[Secs.~4.6 and
10.1]{TAO}. It possesses a (self-explanatory) \emph{bar swap symmetry}
in the sense of \cite{renex}, which implies the IDA
$\cB^{}_{\mathrm{TM}}$ to be reducible. In fact,
\[
   \cB^{}_{\mathrm{TM}} \, = \, \biggl\{
   \begin{pmatrix} \alpha & \beta \\
   \beta & \alpha \end{pmatrix} \Big| \,
   \alpha, \beta \in \CC \biggr\}
\]
is a two-dimensional \emph{commutative} subalgebra of 
$\Mat (2,\CC)$; see \cite[Sec.~4.1]{renex} for details.

Next, observe that any sequence in the TM hull is composed of
overlapping words of the form $1 \bar{1}^{\ell} 1$ with
$\ell \in \{ 0,1,2\}$, where the overlap with the preceding (ensuing)
word is always $1$.  These are nothing but the three right-collared
\emph{return words} for the letter $1$ of the TM system; see
\cite{Durand} for background. Consequently, each $1$ in the sequence
is the first letter of one of the right-collared words (in obvious
notation)
\[
    a \, = \, 1 \bar{1} \bar{1} |_{1} \; , \quad
    b \, = \, 1 \bar{1} |_{1} \quad \text{or} \quad
    c \, = \, 1 |_{1} \, ,
\]
so that there is a simple rule between the TM hull and the derived
$abc$ hull with natural tile lengths, which is local in both
directions.  This makes the two hulls MLD as tiling spaces, where $a$,
$b$ and $c$ represent prototiles of lengths $3$, $2$ and $1$,
respectively.

Now, it is easy to check that we inherit an inflation rule for the
new prototiles, namely
\[
   \varrho^{\ts\prime} : \quad a \mapsto abc \; , \quad
   b \mapsto ac \; , \quad c \mapsto b \; ,
\]
which satisfies the conditions of Proposition~\ref{prop:irreducible},
and thus possesses $\Mat (3,\CC)$ as its IDA, which is irreducible.
Note that, for the inflation $\varrho^{\ts\prime}$, the positions
(left endpoints) of the tiles $a$, $b$ and $c$ in any fixed element of
the hull taken together coincide with the positions of all intervals
of type $1$ in the corresponding element of the original TM hull.
\exend
\end{example}

\begin{example}\label{ex:pd}
  Closely related is the \emph{period doubling} substitution
  $\varrho^{}_{\mathrm{pd}} = (AB,AA)$ on the alphabet $\{ A,B\}$.  It
  defines a subshift that is a factor of the TM system, with a
  globally $2:1$ factor map \cite[Thm.~4.7]{TAO}. In the notation of
  Example~\ref{ex:TM}, the latter is given by the sliding block map
  defined via $\psi (1 \bar{1}) = \psi (\bar{1} 1) = A$ and
  $\psi (11) = \psi (\bar{1} \bar{1}) = B$. It is not difficult to
  check that the IDA of $\varrho^{}_{\mathrm{pd}}$ is generated by the
  digit matrices
\[
   D_{0} \, = \, \begin{pmatrix} 1 & 1 \\ 0 & 0 \end{pmatrix}
   \quad \text{and} \quad
   D_{1} \, = \, \begin{pmatrix} 0 & 1 \\ 1 & 0 \end{pmatrix} . 
\]
Since $D_0 D_1 = D_0$ and 
$D_{1} D_{0} = \left( \begin{smallmatrix} 0 & 0 \\ 1 & 1 
\end{smallmatrix} \right)$, one sees that $\cB_{\mathrm{pd}}$ is
a three-dimensional algebra, namely
\[
   \cB_{\mathrm{pd}} \, = \, \biggl\{
   \begin{pmatrix} \alpha  & \alpha + \gamma \\
    \beta + \gamma & \beta \end{pmatrix} \Big| \,
   \alpha, \beta, \gamma \in \CC \biggr\} ,
\]
which is non-commutative, but still reducible, with non-trivial
invariant subspace $\CC \binom{1}{-1}$.

As in our previous example, we can use the return words
\[
    a \, = \, A|^{}_{A} \quad \text{and} \quad
    b \, = \, AB|^{}_{A}
\]
to construct an inflation rule with distinct tile lengths that defines
an MLD system, then with an irreducible IDA.  \exend
\end{example}

\begin{remark}
  Examples \ref{ex:TM} and \ref{ex:pd} can be extended to general
  constant-length substitutions over a binary alphabet
  $\cA = \{ a, b\}$ as follows. If $\varrho = (w_{a}, w_{b} )$, with
  $w_{a}$ and $w_{b}$ being words of the same length, is primitive and
  aperiodic, the corresponding IDA is either $\cB_{\mathrm{pd}}$ or
  $\cB^{}_{\mathrm{TM}}$. The latter case occurs if and only if
  $\varrho$ is bijective (meaning that $w_{a}$ and $w_{b}$ differ at
  every position). Nevertheless, in all these examples, the induced
  inflation rule for the new alphabet based on the return words has an
  irreducible IDA, but defines a tiling system that is MLD with
  the previous one.  \exend
\end{remark}

The structure of the IDA becomes more complex, and more interesting,
for larger alphabets and in higher dimensions. Here, we discuss one
example, and return to IDAs later, in Section~\ref{sec:abelian} and
in Example~\ref{ex:block}.

\begin{remark}
  For the Rudin--Shapiro substitution \cite{Q,TAO}, as defined by
  $\varrho^{}_{\mathrm{RS}} = (02,32,01,31)$ on $\cA=\{ 0,1,2,3 \}$,
   a return word encoding does not  lead to tiles of distinct length.
   Indeed, the eight right-collared return words for the letter $0$ are
\[
     01|_0 \ts , \; 02|_0 \ts , \; 0131|_0 \ts , \; 0232|_0 \ts , \;
     013132|_0 \ts , \; 01313231|_0 \ts , \;  
     02323132|_0 \ts , \; 0232313231|_0 \ts , 
\]   
   and using them to set up the new alphabet $\cA'=\{ a,b,c, \ldots, h \}$,
   in the same order, one gets the induced substitution
   $\varrho^{\ts \prime}_{\mathrm{RS}} =
      (d, ba, g, bca, ha, he, bcfa, bcfe)$,
   which is again primitive.  Its substitution matrix is
\[
     M \, = \, \mbox{\small $ \begin{pmatrix} 
     0 & 1 & 0 & 1 & 1 & 0 & 1 & 0 \\
     0 & 1 & 0 & 1 & 0 & 0 & 1 & 1 \\
     0 & 0 & 0 & 1 & 0 & 0 & 1 & 1 \\
     1 & 0 & 0 & 0 & 0 & 0 & 0 & 0 \\
     0 & 0 & 0 & 0 & 0 & 1 & 0 & 1 \\
     0 & 0 & 0 & 0 & 0 & 0 & 1 & 1 \\
     0 & 0 & 1 & 0 & 0 & 0 & 0 & 0 \\
     0 & 0 & 0 & 0 & 1 & 1 & 0 & 0 \end{pmatrix}$ }
\]   
with eigenvalues $2$, $\pm \sqrt{2}$, $-1$ and $0$ 
(the last with multiplicity $4$).
The PF left eigenvector is $(2,2,4,4,6,8,8,10)$
for the natural interval lengths, with length $2$
for the shortest interval to reflect the meaning of $a$
in the original version, while the corresponding right eigenvector
$\frac{1}{16}(4,4,2,2,1,1,1,1)^T$
codes the letter frequencies as usual.

It is interesting to note that, in the geometric realisation with
natural interval lengths, we thus have a system that is not of
constant length, but shows absolutely continuous diffraction (and thus
also spectral) components. This follows from the fact that the
Rudin--Shapiro system and this geometric return word system are MLD,
hence lead to topologically conjugate dynamical systems under the
translation action of $\RR$.
  
The IDA of the induced inflation rule is irreducible, which has an
interesting consequence on the way the AC spectrum is encoded in the
Fourier matrix cocycle. In particular, we do no longer have a
$k$-independent subspace with unitary dynamics as in the original
version \cite{renex,Neil}, but a $k$-dependent equivariant
family. This would deserve further exploration, in particular
via extending some results in this direction from \cite{David}.
  \exend
\end{remark}

For the appropriate treatment of pair correlations, one has to go one
step beyond the IDA in considering the real algebra generated by the
matrices $\bs{A} (k) = B(k) \otimes \overline{B(k)}$. Since we need
rather little of this extension below, we summarise the basic
properties in an appendix.

\subsection{Pair correlation functions and measures}

Since the structure of the correlation measures for periodic examples
is clear, we can restrict our attention to primitive inflation rules
that are \emph{aperiodic} in the sense of \cite[Def.~4.13]{TAO}.  Note
that, in one dimension, the hull of a primitive inflation rule is
either periodic or aperiodic. Let
$\vL = \vL_{1} \cup \dots \cup \vL_{\aaa}$ be a fixed point of the
aperiodic, primitive inflation rule $\varrho$ (or of $\varrho^{\ts q}$
for some $q\in\NN$, which defines the same hull), where each interval
of type $a_{i}$ carries a marker point of type $i$ at its left
endpoint.  As in \cite{renex}, we define $\nu^{}_{ij} (z)$ as the
relative frequency of the occurrence of distance $z$ between a point of
type $i$ (left) and one of type $j$ (right). These quantities exist
uniformly due to unique ergodicity, and they are constant on the hull
due to minimality.  In fact, for any $\vL$ from the hull, one has the
same relation,
\begin{equation}\label{eq:nu-def}
     \nu^{}_{ij} (z) \, = \, \frac{\dens \bigl( \vL_i \cap 
        (\vL_j - z) \bigr)}{\dens (\vL )} \ts ,
\end{equation}
which entails $\nu^{}_{ij} (0) = 0$ for $i\ne j$ and
$\sum_{i=1}^{\aaa} \nu^{}_{ii} (0) = 1$.

Clearly, we also have
\begin{equation}\label{eq:gen-symm}
    \nu^{}_{ij} (-z) \, = \, \nu^{}_{ji} (z)
\end{equation}
for all $i,j$ and all $z$. Moreover, we know that
\begin{equation}\label{eq:supports}
   \nu^{}_{ij} (z) \, > \, 0 
   \quad \Longleftrightarrow \quad
    \nu^{}_{ij} (z) \, \ne \, 0
   \quad \Longleftrightarrow \quad
   z \in S_{ij} \defeq \vL_{j} - \vL_{i} \ts ,
\end{equation}
where the Minkowski differences $\vL_{j} - \vL_{i}$ are again the same
for each element of the hull, whence $S_{ij}$ is well defined. Note
that the first equivalence is clear by definition. One direction of
the second equivalence follows from the geometric constraint of the
tiling, while the other is another consequence of minimality (and
hence repetitivity).

\begin{figure}[t]
\begin{pspicture}(9,1.5)
\psline[linewidth=1pt,linestyle=dotted]{|-|}(0,1.1)(3.5,1.1)
\psline[linestyle=solid]{|-|}(0.95,1.1)(2.15,1.1)
\psline[linewidth=1pt,linestyle=dotted]{|-|}(4.95,1.1)(9.05,1.1)
\psline[linestyle=solid]{|-|}(5.8,1.1)(7.4,1.1)
\psline[linestyle=solid](0.95,0.3)(5.8,0.3)
\rput(0.5,0.8){\large $x$}
\rput(5.4,0.78){\large $y$}
\rput(3.5,0){\large $z$}
\end{pspicture}
\caption{If two tiles (solid intervals) at distance $z$ have offsets
  $x$ and $y$ within their covering supertiles (dotted intervals), the
  latter have distance $z+x-y$.\label{fig:one}}
\end{figure}
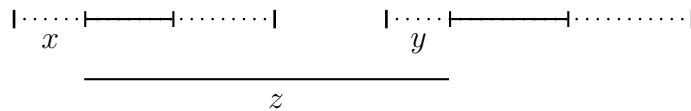

In \cite{renex}, the pair correlation functions $\nu^{}_{ij} (z)$ for
the example of the Fibonacci chain are shown to satisfy a set of
exact, linear \emph{renormalisation equations}, which we now extend to
primitive inflation rules in full generality as follows.

\begin{lemma}\label{lem:frequ-eqs}
  Let\/ $\YY$ be the tiling space defined by an aperiodic, primitive
  inflation rule\/ $\varrho$, with a fixed set of\/ $\aaa$ prototiles
  of natural length. Given some\/ $\vL\in\YY$, let\/ $\nu^{}_{ij} (z)$
  be the relative frequency of occurrence of a tile of type $i$
  $($left$\ts\ts )$ and one of type $j$ $($right$\ts\ts )$ at
  distance\/ $z$ between their left endpoints, which exists and is
  independent of\/ $\vL$. Then, these coefficients are non-negative
  and satisfy the renormalisation equations
\begin{equation}\label{eq:RE1}
    \nu^{}_{mn} (z) \, = \,\myfrac{1}{\lambda} \sum_{i,j = 1}^{\aaa}
    \, \sum_{ x \in T_{m i}} \, \sum_{ y \in T_{n j}}
    \nu^{}_{ij} \Bigl( \myfrac{z+x-y}{\lambda} \Bigr),
  \end{equation}
  for $1\leqslant m,n\leqslant \aaa$, together with the symmetry
  relation~\eqref{eq:gen-symm} and the support
  condition~\eqref{eq:supports}. 
\end{lemma}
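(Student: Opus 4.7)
The plan is to establish \eqref{eq:RE1} via a bijective bookkeeping of tile pairs through their unique decomposition into supertile pairs; all other assertions of the lemma will follow from the definition \eqref{eq:nu-def} together with facts already noted in the surrounding discussion.

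First I would fix a tiling $\vL \in \YY$ with $\vL = \varrho(\vL)$ (possibly after passing to a suitable power of $\varrho$, which generates the same hull). The choice of representative is immaterial because the pair correlations are constant on $\YY$ by unique ergodicity and minimality. At the level of coloured left endpoints, the fixed-point equation reads
\[
   \vL_i \, = \, \bigcup_{j=1}^{\aaa} \ \bigcup_{x \in T_{ij}}
   \bigl( \lambda \ts \vL_j + x \bigr) ,
\]
and the crucial fact is that this union is disjoint: every $p \in \vL_i$ admits a unique decomposition $p = \lambda q + x$ with $j$, $x \in T_{ij}$ and $q \in \vL_j$ determined by $p$. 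This is the recognisability property, which holds by Moss\'e's theorem since $\varrho$ is primitive and aperiodic.

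The second step is the geometric bijection illustrated in Figure~\ref{fig:one}. Given a pair $(p_m, p_n) \in \vL_m \times \vL_n$ at distance $p_n - p_m = z$, apply the above decomposition to each endpoint to write $p_m = \lambda p_i + x$ with $x \in T_{mi}$ and $p_n = \lambda p_j + y$ with $y \in T_{nj}$. Then $p_j - p_i = (z + x - y)/\lambda$, and the assignment $(p_m, p_n) \mapsto (i, j, x, y, p_i, p_j)$ bijects the pairs at distance $z$ in $\vL_m \times \vL_n$ onto the disjoint union, over $i,j$ and $x \in T_{mi}$, $y \in T_{nj}$, of pairs in $\vL_i \times \vL_j$ at distance $(z+x-y)/\lambda$. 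Using $\dens(\lambda \vL_\bullet) = \lambda^{-1} \dens(\vL_\bullet)$ and taking densities along a van Hove sequence, this yields
\[
   \dens \bigl( \vL_m \cap (\vL_n - z) \bigr) \, = \, \frac{1}{\lambda} \sum_{i, j = 1}^{\aaa} \sum_{x \in T_{mi}} \sum_{y \in T_{nj}} \dens \Bigl( \vL_i \cap \bigl(\vL_j - \tfrac{z + x - y}{\lambda}\bigr) \Bigr) ,
\]
and dividing by $\dens(\vL)$ together with \eqref{eq:nu-def} gives \eqref{eq:RE1}. Non-negativity is built into \eqref{eq:nu-def}; the symmetry \eqref{eq:gen-symm} follows by translation invariance of density; and the support condition \eqref{eq:supports} holds because $\vL_j - \vL_i$ is independent of the representative $\vL \in \YY$ by minimality, as already noted.

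The only non-trivial input I expect to use is Moss\'e's recognisability theorem. Without unique supertile decomposition, a given tile could be assigned to several supertiles in the union displayed above, pairs would be multi-counted, and the prefactor $1/\lambda$ would be spurious. Once unique supertile decomposition is in hand, the remainder is routine density bookkeeping in which the scaling factor $\lambda$ enters exactly once, through the density dilution relation $\dens(\lambda A) = \lambda^{-1} \dens(A)$ in one dimension.
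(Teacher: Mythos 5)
Your argument is exactly the one the paper sketches: unique supertile decomposition via recognisability, the offset bookkeeping of Figure~\ref{fig:one} giving supertile distance $(z+x-y)/\lambda$, and the density dilution $\dens(\lambda A)=\lambda^{-1}\dens(A)$ producing the prefactor, with the remaining assertions read off from \eqref{eq:nu-def}. The paper only gestures at this (citing the Fibonacci case and local recognisability), so your write-up is a correct and complete version of the same proof.
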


\begin{proof}[Sketch of proof]
  The derivation works in complete analogy to the Fibonacci case in
  \cite{renex}, employing local recognisability, as illustrated in
  Figure~\ref{fig:one}.  In fact, it is not necessary that we start
  from a fixed point, because the pair correlation functions exist,
  and are the same for every element of the hull $\vL$, which is
  minimal. The derivation only requires the recognition of the unique
  level{\ts}-$1$ supertile to which any individual tile belongs.
\end{proof}

\begin{remark}\label{rem:periodic}
  The derivation of Eq.~\eqref{eq:RE1} relies on the unique
  identification of the covering supertile for each tile, but it does
  not require this process to be local. Therefore, the statement of
  Lemma~\ref{lem:frequ-eqs} can also be extended to primitive inflation
  rules that define a periodic hull. In such a case, one consistently
  marks the supertiles in each element of the hull, for which one has
  more than one choice, and proceeds with the otherwise unchanged
  proof. It is not difficult to check that the outcome does not depend
  on the actual decomposition chosen.  \exend
\end{remark}

For some aspects, as exploited in \cite{renex}, we need to understand
precisely to what extent the renormalisation equations~\eqref{eq:RE1}
determine the frequencies.  Let us thus look at these relations from
scratch, for which we first need to recall some results from the
theory of matrices with non-negative entries.

A matrix $M\in \Mat (d,\ZZ)$, written as
$M = (m^{}_{ij})^{}_{1 \leqslant i,j \leqslant d}$, is called
\emph{non-negative} if $m^{}_{ij} \geqslant 0$ for all $i,j$. We
assume the reader to be familiar with the classic notions of
irreducibility and primitivity of such matrices, and with the classic
theorems due to Perron and Frobenius; compare \cite[Sec.~2.4]{TAO} and
references given there, or \cite[Ch.~13]{G} for a detailed
account. Moreover, we shall need some other properties that are best
stated via the \emph{normal form}, $M_{\mathsf{nf}}$, of a
non-negative matrix $M$. Following \cite[Ch.~13.4]{G}, it is given by
\begin{equation}\label{eq:normal-form}
\renewcommand{\arraystretch}{1.2}
   M_{\mathsf{nf}} \, = \,
   \left(\begin{array}{ccc@{\;}c@{\;}c@{\;}c@{\;\,}c}
   M_{1} &   & \bs{0} &\vline&  && \\
     &  \ddots && \vline && \bs{0} & \\
   \bs{0} &  & M_{r} & \vline & & & \\
   \hline
   M_{r+1,1} & \cdots &  M_{r+1,r} && M_{r+1} && \bs{0} \\
    \vdots &  & && \!\!\! \ddots  & \ddots & \\
   M_{s,1} & &  \cdots && & M_{s,s-1} & M_{s}
   \end{array}\right)
\renewcommand{\arraystretch}{1}
\end{equation}
where $s\geqslant r \geqslant 1$ and all $M_{i}$ are
indecomposable,\footnote{A square matrix $M$ is called
  \emph{decomposable} if it can be brought to the block-triangular
  form $M' = \left(
  \begin{smallmatrix} A & 0 \\ B & C \end{smallmatrix} \right)$
via simultaneous permutations of its rows and columns, 
and \emph{indecomposable} otherwise \cite{G}.}  
non-negative square matrices, and where, if $s>r$, each
sequence $M_{r+\ell,1}, \dots, M_{r+\ell,r+\ell-1}$ contains at least
one non-zero matrix. Such a normal form can always be achieved by a
suitable permutation of the coordinates, hence by the corresponding
simultaneous permutation of the rows and columns of $M$. It is
essentially unique, up to obvious permutations of entire blocks,
which clearly do not change the values of $r$ and $s$.

In particular, we need the following result, which is a simple extension
of \cite[Thm.~13.7]{G}.
\begin{lemma}\label{lem:normal-form}
  Consider a non-negative matrix\/ $M$ in normal form\/
  $M_{\mathsf{nf}}$ according to Eq.~\eqref{eq:normal-form}, and let\/
  $\lambda$ be an eigenvalue of\/ $M$. Then, $M$ has a corresponding
  strictly positive eigenvector, meaning that all entries are
  positive, if and only if
\begin{enumerate}\itemsep2pt
\item $M_{i}$ has eigenvalue\/ $\lambda$ for every\/ $1\leqslant
  i\leqslant r$;
\item No\/ $M_{j}$ with\/ $r < j \leqslant s$ has\/ $\lambda$
  as an eigenvalue.
\end{enumerate}
In this situation, one has\/ $\lambda > 0$, and the eigenspace 
of\/ $\lambda$ is one-dimensional if and only if\/ $r=1$.  \qed
\end{lemma}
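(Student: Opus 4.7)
The plan is to decompose a putative eigenvector $v=(v_1,\ldots,v_s)^{\top}$ according to the block structure of $M_{\mathsf{nf}}$ and analyse each resulting block equation separately, invoking standard Perron--Frobenius theory for the indecomposable diagonal blocks. Throughout, I write $\mu_i = \rho(M_i)$ for the PF eigenvalue of $M_i$ and use its strictly positive right and left Perron eigenvectors.

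For the forward implication, assume $M v = \lambda v$ with $v > 0$. For each isolated block $i \leq r$, the block-diagonal shape of the upper-left part of $M_{\mathsf{nf}}$ reduces the $i$-th block equation to $M_i v_i = \lambda v_i$; combined with $v_i > 0$ and the PF theorem for indecomposable non-negative matrices, this forces $\lambda = \mu_i$, which yields condition (1) and in particular $\lambda > 0$. For $j > r$, the corresponding block equation reads $(\lambda I - M_j) v_j = \sum_{k<j} M_{jk} v_k =: w_j$, and the normal-form requirement guarantees at least one summand is nonzero; with all $v_k > 0$, this makes $w_j \geq 0$, $w_j \neq 0$. Pairing with a strictly positive left Perron eigenvector $\ell_j$ of $M_j$ gives $(\lambda - \mu_j)\, \ell_j^{\top} v_j = \ell_j^{\top} w_j > 0$, and since $\ell_j^{\top} v_j > 0$, I conclude $\lambda > \mu_j$, so in particular $\lambda \notin \sigma(M_j)$, which is (2).

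For the converse, assume (1) and (2) and construct $v$ block by block. Take $v_i > 0$ to be a Perron eigenvector of $M_i$ at eigenvalue $\lambda$ for each $i \leq r$ (available by (1)). For $j > r$, define recursively $v_j = (\lambda I - M_j)^{-1} \sum_{k<j} M_{jk} v_k$, which is well defined by (2). To see that $v_j$ is strictly positive, it suffices to know $\lambda > \mu_j$, because then the Neumann series $(\lambda I - M_j)^{-1} = \sum_{n\geq 0} \lambda^{-(n+1)} M_j^{n}$ converges to a non-negative matrix, and by indecomposability of $M_j$ some finite truncation $I + M_j + \cdots + M_j^{n-1}$ is already strictly positive, mapping the nonzero non-negative vector $w_j$ to a strictly positive $v_j$.

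The main obstacle is the passage from the bare condition (2) to the sharper strict inequality $\lambda > \mu_j$, which I would settle via the classical fact that a strictly positive eigenvector of a non-negative matrix always corresponds to the spectral radius (a Collatz--Wielandt argument): the very $v$ being constructed forces $\lambda = \rho(M) \geq \mu_j$, and (2) rules out equality. The dimension assertion then falls out of the same construction: the $v_i$ for $i \leq r$ are each free up to one positive scalar from the one-dimensional Perron eigenspace of $M_i$, while the $v_j$ for $j > r$ are determined by the recursion. Hence the $\lambda$-eigenspace has dimension exactly $r$, which is one precisely when $r=1$.
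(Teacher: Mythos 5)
Your forward implication and the dimension count are correct, and they follow the standard route behind Gantmacher's Theorem~13.7 (the paper itself gives no argument here, deferring to that result, and in the proof of Theorem~\ref{thm:unique-one} it only ever uses the ``only if'' direction together with the statement about the eigenspace dimension). In particular, pairing the block equations with the strictly positive left Perron vectors $\ell_j$ cleanly extracts from $v>0$ the sharper conclusions $\lambda=\mu_i=\rho(M_i)$ for $i\leqslant r$ and $\lambda>\mu_j=\rho(M_j)$ for $j>r$, and the count ``one free parameter per isolated block, everything else determined by the recursion'' gives the eigenspace dimension $r$ exactly as intended.

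The converse is where the genuine gap sits, and you have put your finger on it yourself: your construction needs $\lambda>\mu_j$ for $j>r$ (and, tacitly, $\lambda=\mu_i$ rather than merely $\lambda\in\sigma(M_i)$ for $i\leqslant r$), but conditions (1) and (2) as literally stated give you neither. The proposed repair is circular: the Collatz--Wielandt argument takes the strictly positive eigenvector $v$ as \emph{input}, and $v>0$ is precisely what you are in the middle of establishing. Worse, the gap cannot be closed, because the ``if'' direction is false as written: for $M=\left(\begin{smallmatrix}1&0\\1&2\end{smallmatrix}\right)$, already in normal form with $r=1$, $s=2$, $M_1=(1)$, $M_2=(2)$, the eigenvalue $\lambda=1$ satisfies (1) and (2), yet its eigenspace is spanned by $(1,-1)^T$; likewise $\lambda=-1$ for $M=\left(\begin{smallmatrix}0&1\\1&0\end{smallmatrix}\right)$ satisfies (1), has (2) vacuous, and admits no positive eigenvector. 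The statement becomes true --- and your Neumann-series construction then closes without any appeal to the vector being built --- once (1) is read as ``$\lambda$ is the \emph{maximal} eigenvalue of every $M_i$ with $i\leqslant r$'' and (2) as ``$\lambda>\rho(M_j)$ for all $j>r$'', which is exactly what your own forward direction produces and what Gantmacher's theorem asserts for $\lambda=\rho(M)$. You should prove the converse under that strengthened (and clearly intended) hypothesis, and flag the discrepancy with the literal wording of the lemma.
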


Having exact renormalisation relations for the pair correlation
functions in Lemma~\ref{lem:frequ-eqs}, where the support of these
functions also follows from the inflation construction, it is a
natural question to what extent these relations, taken on their own,
determine the $\nu^{}_{ij}$. This is tantamount to asking what the
solution space of \eqref{eq:RE1} is, and how it depends on the
prescribed support of the $\nu^{}_{ij}$. The latter point is critical,
due to the occurrence of scaled arguments, and increasing the support
might increase the solution space; compare \cite{renex}.

\begin{theorem}\label{thm:unique-one}
  Assume that the geometric data needed to write down
  Eq.~\eqref{eq:RE1} are taken from a $($periodic or
  aperiodic$\ts\ts )$ primitive inflation rule\/ $\varrho$ with
  inflation multiplier\/ $\lambda$ as explained above.  Let\/
  $\nu^{}_{ij}$, with\/ $1\leqslant i,j \leqslant \aaa$, be
  real-valued functions with\/
  $\supp (\nu^{}_{ij}) \subseteq \Delta^{}_{ij} \subset \RR$, where
  the\/ $\Delta^{}_{ij}$ are given point sets such that\/
  $\Delta \defeq \bigcup_{i,j} \Delta^{}_{ij}$ is locally finite and
  contains\/ $0$.  Then, the solution space of the linear
  renormalisation equations \eqref{eq:RE1} with functions of this type
  is finite-dimensional. When\/ $S^{}_{ij} \subseteq \Delta^{}_{ij}$
  for all\/ $1\leqslant i,j \leqslant \aaa$, this solution space is
  non-trivial.

  In particular, when\/ $\Delta^{}_{ij} = S^{}_{ij}$ for all\/
  $1\leqslant i,j \leqslant \aaa$, the solution space is
  one-dimensional, so there is precisely one solution with\/
  $\sum_{i=1}^{\aaa} \nu^{}_{ii} (0) = 1$, which is actually strictly
  positive. This solution automatically satisfies the symmetry
  relation~\eqref{eq:gen-symm}.  If\/ $\Delta^{}_{ij}$ is further
  restricted to a true subset of\/ $S^{}_{ij}$ for at least one index
  pair, the solution space becomes trivial.
\end{theorem}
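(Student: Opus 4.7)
For the finite-dimensionality of the solution space, the key observation is that the map $z \mapsto (z+x-y)/\lambda$ appearing on the right-hand side of \eqref{eq:RE1} is strictly contracting on a neighbourhood of infinity: since $x,y$ range over the bounded set $S_{T}$, there is some $R>0$ (for instance $R=\max(S_{T})/(\lambda-1)$) such that $|z|>R$ implies $|(z+x-y)/\lambda|<|z|$. Iterating, every $z\in\Delta$ reaches the seed set $\Delta\cap[-R,R]$ after finitely many steps, and this seed set is finite by the local finiteness of $\Delta$. The equations \eqref{eq:RE1} then determine $\nu^{}_{mn}(z)$ for all $z$ outside the seed set recursively from its values on the seed set, so the solution space injects linearly into a finite-dimensional space of functions on the seed set. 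Existence of a non-trivial solution whenever $S^{}_{ij}\subseteq\Delta^{}_{ij}$ is then immediate from Lemma~\ref{lem:frequ-eqs}: the natural pair correlation functions $\nu^{*}_{ij}$ coming from the patch frequencies on any element of $\YY$ form a solution supported on $S^{}_{ij}\subseteq\Delta^{}_{ij}$, and they are strictly positive on $S^{}_{ij}$ by minimality and (linear) repetitivity of the hull.

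The heart of the proof is uniqueness when $\Delta^{}_{ij}=S^{}_{ij}$. By the reduction above, the system \eqref{eq:RE1} on the seed set becomes a finite non-negative matrix eigenvalue problem $A\vec{\nu}=\lambda\vec{\nu}$, where $A$ is indexed by triples $(i,j,z)$ with $z$ in the finite set $S\cap[-R,R]$ and has non-negative integer entries counting pairs $(x,y)\in T_{mi}\times T_{nj}$ that realise the relation $z=\lambda z'+y-x$. The natural solution $\nu^{*}$ provides a strictly positive eigenvector of $A$ for the eigenvalue $\lambda$, so Lemma~\ref{lem:normal-form} immediately delivers its two positivity conditions and reduces one-dimensionality of the eigenspace to showing that the parameter $r$ in the normal form of $A$ equals $1$.

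This last step is the main obstacle and is where the primitivity of $\varrho$ enters. The statement $r=1$ says that the digraph underlying $A$ has a unique source-type strongly connected component achieving spectral radius $\lambda$. Via Fact~\ref{fact:F-matrix} and the iterated displacement sets $T^{(k)}_{ij}$, primitivity of the substitution matrix $M^{}_{\varrho}$ guarantees that for sufficiently large $k$ every letter appears in $\varrho^{k}(a^{}_{j})$ for every $j$, so any two admissible triples in the seed set can be placed into a common strongly connected component through an appropriately chosen chain of iterated displacements. This is the combinatorial content that upgrades primitivity of $\varrho$ to the irreducibility statement needed for $A$, and it is the step that requires the most care in the bookkeeping. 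Once $r=1$ is verified, the strict positivity of the unique normalised solution also follows from Lemma~\ref{lem:normal-form}.

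With uniqueness in hand, the symmetry relation~\eqref{eq:gen-symm} is automatic: the map $\nu\mapsto\nu'$ defined by $\nu'_{ij}(z)=\nu^{}_{ji}(-z)$ preserves \eqref{eq:RE1} (by simultaneously relabelling $i\leftrightarrow j$, $m\leftrightarrow n$ and $x\leftrightarrow y$ in the sum), so $\nu'=c\nu^{*}$ by uniqueness, and the normalisation condition $\sum_{i}\nu^{}_{ii}(0)=1$ forces $c=1$. For the triviality claim, any solution $\nu$ supported in $\Delta\subsetneq S$ is in particular a solution with support contained in $S$, hence $\nu=c\nu^{*}$ by the uniqueness just established; since $\nu^{*}$ has full support $S^{}_{ij}$ on every component while $\nu$ is supported on a strict subset, we must have $c=0$, that is, $\nu=0$.
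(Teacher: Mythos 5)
Your overall strategy coincides with the paper's: reduce to a finite closed subsystem via the contraction $z \mapsto (z+x-y)/\lambda$, read the resulting system as a non-negative eigenvalue problem $A\ts\underline{\nu} = \lambda\ts\underline{\nu}$, invoke Lemma~\ref{lem:normal-form} with the strictly positive solution supplied by the hull, and reduce everything to showing $r=1$ in the normal form of $A$. The finite-dimensionality, existence, symmetry and triviality parts are all in order (your explicit symmetry argument via $\nu'_{ij}(z) = \nu^{}_{ji}(-z)$ is a correct elaboration of what the paper leaves as ``clear'').

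The gap sits exactly in the step you yourself flag as requiring ``the most care'': the verification of $r=1$. You assert that ``any two admissible triples in the seed set can be placed into a common strongly connected component'', which would mean $A$ is irreducible. That is false, and in fact contradicts what your own reduction has already established: the equations for $\nu^{}_{mm}(0)$ close among the triples $(i,i,0)$ alone --- this is precisely the subblock \eqref{eq:subblock} --- so no edges leave that component, and a triple $(i,j,z)$ with $z\neq 0$ can never share a strongly connected component with $(1,1,0)$. What is actually needed, and what the paper proves, is the weaker \emph{directed} statement: for every $0 \neq z \in S^{}_{ij}$, there is a legal patch $a_{i}\ts w\ts a_{j}$ realising the distance $z$ inside some level{\ts}-$k$ supertile $\varrho^{k}(a^{}_{1})$, so the $k$-fold iterate of \eqref{eq:RE1} exhibits a directed path from the node $(i,j,z)$ to the node $(1,1,0)$. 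Since the blocks $M_{1},\dots,M_{r}$ of the normal form \eqref{eq:normal-form} are exactly the components whose equations close among themselves, any such component must contain the endpoint of every path issuing from its nodes, hence must contain $(1,1,0)$; as the component of $(1,1,0)$ is the primitive block from \eqref{eq:subblock}, this forces $r=1$. Without this reachability argument (or an equivalent one), the one-dimensionality claim is not established.
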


\begin{proof}
  Observe first that, since $\lambda > 1$, the set of
  equations with
\[
    \lvert z \rvert  \, \leqslant \, c \, \defeq \,\myfrac{1}{\lambda - 1}\,
    \sup \{ x-y \mid x \in T^{}_{m i} \ts , \, y \in T^{}_{n j} 
    \ts , \, 1 \leqslant i,j,m,n \leqslant \aaa \}
\]
forms a closed subsystem of linear equations. When the total support
is inside a fixed locally finite set $\Delta\subset\RR$, we know that
$\Delta \cap [-c,c]$ is a finite set, wherefore the closed subsystem
comprises \emph{finitely} many equations only (there is at least one,
since $0\in\Delta$ by assumption).

Observe next that we only need to prove the dimensionality claim for
this subsystem. Indeed, since $\Delta\subset\RR$ is locally finite,
there is a smallest $z\in\Delta$ with $z> c$, and the values of the
correlation coefficients at $z$ are then uniquely determined, either
from the support constraint or from Eq.~\eqref{eq:RE1}, where only
arguments with modulus $< z$ (and hence $\leqslant c$ in this case)
occur on the right-hand side. Proceeding inductively with growing
modulus of $z$, one sees that the correlation coefficients are
uniquely determined for all $z>c$. An analogous argument works for all
$z\in\Delta$ with $z<-c$.

Consequently, the solution space dimension of the entire system, with
functions $\nu^{}_{ij}$ with $\supp (\nu^{}_{ij}) \subseteq \Delta$,
equals that of the closed, finite subsystem. This proves the first
claim, while the underlying inflation tiling space with its properties
guarantees at least one solution if the supports are large enough;
compare Eq.~\eqref{eq:supports}.

Indeed, when $\Delta^{}_{ij} = S^{}_{ij} = \vL^{}_{j} - \vL^{}_{i}$,
we know from Eq.~\eqref{eq:supports} that a strictly positive solution
exists, with $\supp (\nu^{}_{ij}) = \Delta^{}_{ij}$ for all
$1\leqslant i,j \leqslant \aaa$. Consider the subset of equations of
\eqref{eq:RE1} that emerge from inserting $z=0$ on the left-hand
side. Since $\vL_{i} \cap \vL_{j} = \varnothing$ for $i\ne j$, one has
$0\in S_{ij}$ if and only if $i=j$. This means that we must have
$\nu^{}_{ij} (0) = 0$ for all $i\ne j$. Moreover, if
$x \in T^{}_{m i}$ and $y \in T^{}_{n j}$, one has
$0\leqslant x < \lambda \ts \ell_{i}$ and
$0\leqslant y < \lambda \ts \ell_{j}$ and hence the inequality
\[
    - \ell_{j} \, < \, \frac{x-y}{\lambda} \, < \, \ell_{i}
\]
from the geometry of the prototiles. So, for $x \ne y$, the point
$\frac{x-y}{\lambda}$ cannot be an element of $S_{ij}$, and
$\nu^{}_{ij}$ must vanish there.  We thus remain with the relations
\[
    \nu^{}_{mm} (0) \, = \, \myfrac{1}{\lambda}
    \sum_{i=1}^{\aaa} \card (T^{}_{m i}) \, \nu^{}_{ii} (0) \ts ,
\]
for $1\leqslant m \leqslant \aaa$, which together give the eigenvalue
equation
\begin{equation}\label{eq:subblock}
    M_{\varrho} \begin{pmatrix} \nu^{}_{11} (0) \\
    \vdots \\ \nu^{}_{\aaa\aaa} (0) \end{pmatrix} \, = 
    \, \lambda \begin{pmatrix} \nu^{}_{11} (0) \\
    \vdots \\ \nu^{}_{\aaa\aaa} (0) \end{pmatrix} .
\end{equation}
By primitivity of $\varrho$, the non-negative matrix $M_{\varrho}$ is
primitive, and the eigenspace of $\lambda = \lambda^{}_{\mathrm{PF}}$
is one-dimensional. Moreover, there is an eigenvector with
$\nu^{}_{ii} (0) > 0$ for all $i$, which can be normalised as
$\sum_{i=1}^{\aaa} \nu^{}_{ii} (0) = 1$. If we now multiply both sides
of Eq.~\eqref{eq:RE1} by $\lambda$, we may interpret it as an
eigenvector equation for eigenvalue $\lambda$, where we have finitely
many vector components of the form $\nu^{}_{ij} (z)$ with
$z \in S^{}_{ij}$ and $\lvert z \rvert \leqslant c$ from above.  The
right-hand side of Eq.~\eqref{eq:RE1} can now be seen as the
application of a non-negative matrix $A$ to this vector, where $A$ is
decomposable, because we have identified an irreducible subblock in
Eq.~\eqref{eq:subblock}.

Since we know from the underlying inflation $\varrho$ together with
Eq.~\eqref{eq:supports} that $A$ has $\lambda$ as an eigenvalue with a
strictly positive eigenvector corresponding to it, we deduce from
Lemma~\ref{lem:normal-form} that the eigenspace for $\lambda$ is
one-dimensional if no other invariant diagonal subblock of $A$ exists
with eigenvalue $\lambda$, meaning $r=1$ in
Lemma~\ref{lem:normal-form}.

Indeed, when there is some $0 < z \in S_{ij}$, we know that there
exists a legal patch of the form $a_{i} \ts w \ts a_{j}$, with $w$ a
finite word and $a_{i} \ts w$ coding a patch of length $z$. From the
structure of the primitive inflation $\varrho$, we know that
$a_{i} \ts w \ts a_{j}$ must then be a subword of
$\varrho^{k} (a^{}_{1})$ for some $k\in\NN$, where we may assume $k$
to be the minimal such power. Now, applying the renormalisation
equation $k$ times to $a^{}_{1}$ implies that $\nu^{}_{ij} (z)$ is
linked to $\nu^{}_{11} (0)$ on this level, as the patch under
consideration lies in this very level{\ts}-$k$
supertile. Consequently, $\nu^{}_{ij} (z)$ cannot belong to a
decoupling subset of components. An analogous argument holds for
negative $z$.

We thus see that the parameter $r$ of the normal form
$A_{\mathsf{nf}}$ of $A$ must be $r=1$, and the dimension of our
solution space is indeed $1$ when $\Delta^{}_{ij}=S^{}_{ij}$. The
symmetry is clear, while the final claim is now a simple consequence
of Eq.~\eqref{eq:supports}, which followed from the strict ergodicity
of our underlying dynamical system.
\end{proof}

\begin{remark}\label{rem:K-struc}
  One can rewrite Eq.~\eqref{eq:RE1} in a slightly different way that
  establishes a link to the generators of the $\RR$-algebra $\bs{\cA}$
  from the Appendix, as originally studied in \cite[Sec.~5.2]{BFGR} in
  the case of binary alphabets. If we use
  $\underline{\nu} = (\nu^{}_{11},\nu^{}_{12}, \ldots,
  \nu^{}_{\aaa\aaa})^{T}$ in the standard ordering for a double index,
  one can check that Eq.~\eqref{eq:RE1} can be rewritten as
\begin{equation}\label{eq:RE2}
   \lambda \, \underline{\nu} (z) \, = \sum_{x\in S_{T} - S_{T}}\!
   F_{\nts x} \, \underline{\nu} \left(\myfrac{z+x}{\lambda} \right),
\end{equation}
where we use (as before) the convention to set $\nu^{}_{ij} (z) = 0$
whenever $z$ is not in the admissible set $\Delta^{}_{ij}$. Note that
this version of the renormalisation equation can be read as an
eigenvector equation for a non-negative matrix.  \exend
\end{remark}

\subsection{Renormalisation for correlation measures and their
Fourier transforms}

Let us return to the hull $\YY$ defined by the primitive
inflation rule $\varrho$, and consider some $\vL \in \YY$.  The
Minkowski difference $\Delta=\vL - \vL$ is locally finite and the same
set for all elements $\vL \in \YY$.  We may thus consistently define
pure point measures
$\vU^{}_{\nts m\ts n} = \sum_{z\in\vL - \vL} \nu^{}_{m\ts n} (z) \,
\delta^{}_{z}$
as in \cite{renex}, called the \emph{pair correlation measures}. 
Now, Eqs.~\eqref{eq:gen-symm} and \eqref{eq:supports} imply
\begin{equation}\label{eq:props-one}
   \widetilde{\vU^{}_{\nts m\ts n}} = 
   \vU^{\phantom{\chi}}_{\! n\ts m }
   \quad \text{and} \quad
   \vU^{\phantom{\chi}}_{\! m\ts n} \geqslant 0 \ts .
\end{equation}
Moreover, each $\vU^{}_{\nts m\ts m}$ is a positive definite measure,
and, due to Eq.~\eqref{eq:nu-def}, see also \cite[Eq.~4.1]{BFGR}, one
actually has the representation
\begin{equation}\label{eq:corr-as-conv}
    \vU^{}_{\nts m \ts n} \, = \,
   \frac{ \widetilde{\delta^{}_{\! \vL_m}}\!
    \circledast \delta^{}_{\! \vL_n} }{\dens (\vL)}
\end{equation}
which could also be used to define the measures in the first place.

With this definition, Eq.~\eqref{eq:RE1} implies the
measure{\ts}-valued counterpart
\begin{equation}\label{eq:RE3}
   \vU^{}_{\nts m\ts n} \, = \, \myfrac{1}{\lambda} \sum_{i,j = 1}^{\aaa}
   \sum_{r\in T_{m i}} \sum_{s\in T_{n j}} \delta^{}_{s-r} *
   \bigl( f\nts \nts . \ts  \vU^{}_{\nts ij}\bigr) ,
\end{equation}
as originally derived in \cite[Lemma~4.2]{BFGR} for the binary case.
Here, $f(x) \defeq \lambda x$ and $f\! . \ts \mu$ is defined via
$\bigl( f\! . \ts \mu\bigr) (\cE) = \mu \bigl( f^{-1} (\cE)\bigr)$ for
Borel sets $\cE$, which matches with the definition via test functions
used earlier. Note that the $\vU^{}_{\nts i j}$ are the pair
correlation measures both of the entire hull $\YY$ and of each
individual member of $\YY$.

\begin{remark}\label{rem:diffraction}
  The pair correlation measures can be considered as the building
  blocks of diffraction theory as follows. Decompose $\vL \in \YY$ as
  $\vL = \dot{\bigcup}_{1 \leqslant i \leqslant \aaa} \vL_i$ into the
  distinct types of points, and consider the measure
  $\omega = \sum_{i=1}^{\aaa} u^{}_{i} \ts \delta^{}_{\! \vL_i}$ with
  weights $u^{}_{i} \in \CC$, which is translation bounded by
  construction. Its autocorrelation $\gamma = \gamma^{}_{u}$ according
  to Eq.~\eqref{eq:def-auto} exists, and reads
\[
    \gamma^{}_{u} \, = \, \dens (\vL)
    \sum_{i,j=1}^{\aaa} \overline{u^{}_{i}}
    \, \vU^{}_{ij} \ts u^{}_{j} \ts ,
\]
which also implies that one has
$\widehat{\gamma^{}_{u}} = \dens (\vL) \sum_{i,j} \overline{u^{}_{i}}
\, \widehat{\vU}^{}_{ij} \ts u^{}_{j}$.
In this sense, understanding $\vU$ and $\widehat{\vU}$ gives 
complete access to the diffraction measures of the dynamical system.
\exend
\end{remark}

Observe that, by Lemma~\ref{lem:transformable}, all measures
$\vU^{}_{\nts m n}$ are translation bounded and transformable.
As a result of Eq.~\eqref{eq:props-one}, we also know that
\begin{equation}\label{eq:props-two}
   \overline{\widehat{\vU^{}_{\nts m n}}} \, = \, 
   \widehat{\widetilde{ \vU^{}_{\nts m n}}} \, = \,
   \widehat{\vU^{}_{\nts n m}} \ts ,
\end{equation}
and each $\widehat{\vU^{}_{\nts m n}}$ is a positive definite measure,
with $\widehat{\vU^{}_{\nts m m}} \geqslant 0$ in addition for all $m$. 

Recalling from \cite[Lemma~2.5]{BGM} that $\widehat{f\nts\nts .\mu} =
\frac{1}{\lambda}\ts (f^{-1}\! .\ts \widehat{\mu})$ holds for any
transformable measure $\mu$, and applying Fourier transform and
the convolution theorem to \eqref{eq:RE3}, one finds
\begin{equation}\label{eq:RE-new}
   \widehat{\vU^{}_{\nts m n}} \, = \,  
   \myfrac{1}{\lambda^{2}} \sum_{i,j = 1}^{\aaa}
   \sum_{r\in T_{m i}} \sum_{s\in T_{n j}} 
   \ee^{-2\pi\ii (s-r) (.)} \bigl( f^{-1} \! . \ts 
   \widehat{\vU^{}_{\nts ij}}\bigr) ,
\end{equation}
which is the appropriate generalisation of the Fibonacci equations
from \cite{renex} to this more general situation. Let us rewrite
Eq.~\eqref{eq:RE-new} in matrix form to highlight its structure.
Employing column vector notation for the Kronecker product structure
discussed in Remark~\ref{rem:K-struc} and in the Appendix, with
\[
   \vU \, \defeq \,
   (\vU^{}_{\nts 11}, \vU^{}_{\nts 12} ,
     \ldots , \vU^{}_{1\aaa},
    \vU^{}_{\nts 21}, \vU^{}_{\nts 22}, \ldots ,
    \vU^{}_{\nts \aaa\aaa})^{T}
\]
and similarly for the Fourier transform, one now finds
\begin{equation}\label{eq:FT-scaling}
    \widehat{\vU} \, = \, \myfrac{1}{\lambda^{2}} \, \bs{A}(.)
    \bigl(f^{-1} \! . \ts \widehat{\vU} \bigr)
\end{equation}
with $\bs{A}(k) = B(k) \otimes \overline{B(k)}$ in complete analogy to
the Fibonacci example treated in \cite{renex}, and $B(k)$ as in
Definition~\ref{def:B-mat}; see also \cite[Prop.~4.3]{BFGR}. Several
properties of the matrices $\bs{A} (k)$ and the algebra generated by
them are collected in the Appendix.

Next, observe that each component $\widehat{\vU^{}_{\nts ij}}$ has a
unique decomposition into its pure point (\textsf{pp}) and continuous
(\textsf{c}) parts, where the support of the pure point part is (at
most) a countable set. The union of these countable sets over $i,j$
still is a countable set, and justifies the decomposition of the
measure vector $\widehat{\vU}$ as
$ \widehat{\vU} = \bigl(\widehat{\vU} \ts \bigr)_{\mathsf{pp}} +
\bigl(\widehat{\vU} \ts \bigr)_{\mathsf{c}}$.
This means that we have a decomposition
$\RR = \cE_{\mathsf{pp}} \, \dot{\cup} \, \cE_{\mathsf{c}}$ such that
\[
     \bigl(\widehat{\vU} \ts \bigr)_{\mathsf{pp}} \, = \,
     \widehat{\vU}\big|_{\cE_{\mathsf{pp}}}
     \quad \text{and} \quad 
     \bigl(\widehat{\vU} \ts \bigr)_{\mathsf{c}} \, = \,
     \widehat{\vU}\big|_{\cE_{\mathsf{c}}} \ts ,
\]    
where $\cE_{\mathsf{pp}}$ is a countable set. Without loss of
generality, we may also assume that
$\cE_{\mathsf{pp}} = f (\cE_{\mathsf{pp}} )$, for instance by
replacing $\cE_{\mathsf{pp}}$ with
$\cE^{\prime}_{\mathsf{pp}} \defeq \bigcup_{m\in\ZZ} f^m
(\cE_{\mathsf{pp}})$,
which is still a countable set, and $\cE_{\mathsf{c}}$ with
$\cE^{\prime}_{\mathsf{c}} \defeq \RR \setminus
\cE^{\prime}_{\mathsf{pp}}$,
which is then also invariant under $f$. This gives another valid
decomposition of $\RR$, which is better suited for our purposes.

Similarly, one can now further split the continuous component into its
singular continuous (\textsf{sc}) and absolutely continuous
(\textsf{ac}) parts, finally giving
\[
    \widehat{\vU} \, = \,
    \bigl(\widehat{\vU} \ts \bigr)_{\mathsf{pp}} +
    \bigl(\widehat{\vU} \ts \bigr)_{\mathsf{sc}} +
    \bigl(\widehat{\vU} \ts \bigr)_{\mathsf{ac}} \ts ,
\]
where each part is concentrated on a set that is a null set for the
other two parts. Note that we may assume, without loss of generality,
that the supporting sets are disjoint and invariant under the linear
mapping $f$. This follows constructively by an extension of our
previous argument to a decomposition of $\cE^{\prime}_{\mathsf{c}}$
into two sets, leading to
$\RR = \cE^{\prime}_{\mathsf{pp}} \, \dot{\cup} \,
\cE^{\prime}_{\mathsf{sc}} \, \dot{\cup} \,
\cE^{\prime}_{\mathsf{ac}}$ with
$\bigl(\widehat{\vU} \ts \bigr)_{\alpha} =
\widehat{\vU}\big|_{\cE^{\prime}_{\alpha}}$
and $ f (\cE^{\prime}_{\alpha}) = \cE^{\prime}_{\alpha}$ for all
$\alpha \in \{ \mathsf{pp}, \mathsf{sc}, \mathsf{ac} \}$.

\begin{lemma}\label{lem:type-recursion}
  The scaling relation \eqref{eq:FT-scaling} for\/
  $\widehat{\vU}$ holds for each of the three spectral types
  separately. In other words, one has
\[
    \bigl(\widehat{\vU} \ts \bigr)_{\mathsf{\alpha}} \, = \, 
    \myfrac{1}{\lambda^{2}} \,  \bs{A}(.)
    \bigl(f^{-1} \! . \ts \widehat{\vU} \ts\ts \bigr)_{\nts\alpha}
\]
  for each\/ $\alpha \in \{ \mathsf{pp}, \mathsf{sc},
  \mathsf{ac} \}$. Moreover, Eq.~\eqref{eq:props-two} holds
  for each spectral type separately.
\end{lemma}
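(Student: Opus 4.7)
The plan is to show that both operations appearing on the right-hand side of~\eqref{eq:FT-scaling}---componentwise multiplication by the continuous matrix-valued function $\bs{A}(k)$ and push-forward under the linear map $f^{-1}$---commute with restriction to each of the $f$-invariant sets $\cE^{\prime}_{\alpha}$ constructed immediately before the lemma. Once this is established, restricting \eqref{eq:FT-scaling} to $\cE^{\prime}_{\alpha}$ directly delivers the scaling relation for $\bigl(\widehat{\vU}\bigr)_{\alpha}$, because $\bigl(\widehat{\vU}\bigr)_{\alpha} = \widehat{\vU}\big|_{\cE^{\prime}_{\alpha}}$ by construction.

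Concretely, I would first record two elementary facts for a transformable Radon measure $\mu$ on $\RR$: for any continuous function $g$ and any Borel set $E$, one has $(g\mu)\big|_{E} = g\,(\mu\big|_{E})$, simply by restricting the defining test-function formula; and for the bijection $f(x) = \lambda x$ and any $f$-invariant Borel set $E$, one has $(f^{-1}\!.\mu)\big|_{E} = f^{-1}\!.(\mu\big|_{E})$, which follows from $(f^{-1}\!.\mu)(A) = \mu(f(A))$ together with the identity $f(A\cap E) = f(A)\cap E$ valid under $f(E) = E$. Applying these componentwise to the vector-valued relation \eqref{eq:FT-scaling} and using $f(\cE^{\prime}_{\alpha}) = \cE^{\prime}_{\alpha}$, the restriction of \eqref{eq:FT-scaling} to $\cE^{\prime}_{\alpha}$ becomes
\[
   \widehat{\vU}\big|_{\cE^{\prime}_{\alpha}}
   \, = \, \myfrac{1}{\lambda^{2}} \, \bs{A}(.)\,
   \bigl( f^{-1}\!.\bigl(\widehat{\vU}\big|_{\cE^{\prime}_{\alpha}}\bigr) \bigr),
\]
which is the claim for $\alpha \in \{ \mathsf{pp}, \mathsf{sc}, \mathsf{ac}\}$.

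For the symmetry statement, I would use that the map $\mu \mapsto \overline{\mu}$ commutes with the Lebesgue decomposition, since it alters neither the support nor the absolute-continuity relation relative to Lebesgue measure on $\RR$. Hence the common decomposition sets $\cE^{\prime}_{\alpha}$ can be used simultaneously for $\widehat{\vU^{}_{\nts m n}}$ and for $\widehat{\vU^{}_{\nts n m}} = \overline{\widehat{\vU^{}_{\nts m n}}}$, so restriction of the identities in~\eqref{eq:props-two} to $\cE^{\prime}_{\alpha}$ gives the corresponding identities for each spectral component. The main point that requires care is the \emph{simultaneous} construction of the three $f$-invariant sets $\cE^{\prime}_{\alpha}$ as a common supporting decomposition for all $\aaa^{2}$ components of the vector measure $\widehat{\vU}$; this has been arranged by the countable-union procedure in the paragraph preceding the lemma, and once available, the rest is a routine interchange of restriction with continuous multiplication and with linear push-forward.
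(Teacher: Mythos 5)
Your proposal is correct and follows essentially the same route as the paper: the paper notes that multiplication by the analytic matrix function $\bs{A}(.)$ and the dilation $f^{-1}\!.$ both preserve spectral type and then leaves the rest as ``an exercise in restricting the measures on both sides to the $f$-invariant supporting sets $\cE^{\prime}_{\alpha}$'', which is precisely the exercise you carry out via the two commutation identities $(g\mu)\big|_{E} = g\,(\mu\big|_{E})$ and $(f^{-1}\!.\mu)\big|_{E} = f^{-1}\!.(\mu\big|_{E})$ for $f$-invariant $E$. Your treatment of the second claim via conjugation commuting with the Lebesgue decomposition likewise matches the ``standard arguments'' the paper invokes.
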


\begin{proof}
  Note that the matrix function $\bs{A}(k)$ depends analytically on
  $k$, wherefore the measure vectors $\bs{A}(.) \ts \bs{\mu}$ and
  $\bs{\mu}$ are of the same spectral type; in particular,
  $\bigl(\bs{A} (.) \ts \bs{\mu}\bigr)_{\alpha} = \bs{A} (.) \bigl(
  \bs{\mu}\bigr)_{\alpha}$
  holds for each
  $\alpha \in \{ \mathsf{pp}, \mathsf{sc}, \mathsf{ac} \}$. Moreover,
  the map $\bs{\mu} \mapsto f^{-1}\! . \ts \bs{\mu}$, which is a
  simple dilation, does not change the spectral type either. The claim
  is now an exercise in restricting the measures on the left-hand and
  right-hand sides to the $f$-invariant supporting sets
  $\cE^{\prime}_{\alpha}$ together with their measure{\ts}-theoretic
  orthogonality; compare \cite[Prop.~8.4]{TAO}.
  
  The last claim follows from standard arguments.
\end{proof}

\subsection{Analysis of pure point part}\label{sec:pp-part}

Let us first take a closer look at the pure point part, which can be
written as
\[
    \bigl(\widehat{\vU} \ts \bigr)_{\mathsf{pp}} \, = 
    \, \dens (\vL) \sum_{k\in \cE^{\prime}_{\mathsf{pp}}}
    \bs{I} (k) \, \delta^{}_{k}
\]
with the (at most countable) set $\cE^{\prime}_{\mathsf{pp}}$
introduced earlier. Here, the extra factor $\dens (\vL)$ is introduced
to match the definition of $\bs{I} (k)$ as a relative (or dimensionless)
quantity with the interpretation of $\widehat{\gamma^{}_{u}} (0)$
according to Remark~\ref{rem:diffraction} and \cite[Prop.~9.2]{TAO}.
As before, we use a vector notation, with the intensity vector
$\bs{I}$, so
$\bigl(\widehat{\vU^{}_{ij}}\bigr)_{\mathsf{pp}} = \dens (\vL)
\sum_{k\in \cE^{\prime}_{\mathsf{pp}}} I_{ij} (k) \, \delta_{k}$.  As
a result of Eq.~\eqref{eq:props-two} and
Lemma~\ref{lem:type-recursion}, we have $I_{ii} (k) \geqslant 0$
together with
\[
   \overline{I_{ij} (k)} \, = \,
   I_{ji} (k) \, = \, I_{ij} (-k) \ts .
\]
A straight-forward calculation with point measures in the form of
weighted Dirac combs, in conjunction with a comparison of
coefficients, now shows that $\bs{I}$ must also satisfy the identity
\begin{equation}\label{eq:pp-recursion}
   \bs{I} (k) \, = \,  \lambda^{-2}\bs{A}(k) 
   \ts \bs{I} (\lambda\ts k)
\end{equation}
for all $k$, with the obvious understanding that we set 
$I_{ij} (k) = 0$ for any $k$ outside the supporting set 
$\cE^{\prime}_{\mathsf{pp}}$. In
particular, for $k=0$, Eq.~\eqref{eq:pp-recursion} entails the relation
\begin{equation}\label{eq:pp-at-0}
     \bs{A}(0) \ts \bs{I} (0) \, = \, \lambda^{2} \ts \bs{I} (0) \ts .
\end{equation}
Since $\bs{A}(0) = M_{\varrho} \otimes M_{\varrho}$, which has PF
eigenvalue $\lambda^{2}$, we recognise this as an eigenvalue equation
that is related to the frequencies of the prototiles, respectively the
density of the subsets of points of the corresponding type.  In fact,
the solution is unique up to an overall constant, and given by
\begin{equation}\label{eq:pp-dens}
      I_{ij} (0) 
      \, = \, \alpha^2 \dens(\vL_{i}) \ts \dens(\vL_{j}) \ts ,
\end{equation}
where $\alpha = \dens (\vL)^{-1}$ in our setting due to the definition of
the $\nu^{}_{ij} (z)$ as relative (and hence dimensionless)
frequencies.  Note that this contribution, which decouples from other
values of $k$, is always present, no matter whether our system has
non-trivial point spectrum or not.

Let us briefly look at the above scaling relations in a different
way. Defining the matrix
$\cI = (I^{}_{ij})^{}_{1\leqslant i,j \leqslant \aaa}$, one checks that
Eq.~\eqref{eq:pp-recursion} can be rewritten as
\[
    \cI (k) \, = \, \lambda^{-2}
    B(k) \, \cI (\lambda\ts k) B(k)^{\dag} ,
\]
where ${}^{\dag}$ denotes Hermitian conjugation. Clearly, this implies
the relation
\[
    \lvert \det (B(k)) \rvert^{2} \det(\cI (\lambda \ts k)) 
    \, = \, \lambda^{2 \aaa} \det (\cI (k)) \ts ,
\]
which has various consequences. In particular, $\det (\cI (k))$ will
usually vanish. Moreover, the matrix $\cI (0)$ has rank $1$, as
follows from Eq.~\eqref{eq:pp-dens}. In fact, a more general result is
true.

\begin{theorem}\label{thm:points}
  Let\/ $\varrho$ be a primitive inflation rule with hull\/ $\YY$.
  Then, for all\/ $k\in\RR$, there are
  numbers\/ $a^{}_{i} (k)$, with\/ $1 \leqslant i \leqslant d$, such
  that the identity\/
  $I_{ij} (k) = \overline{a^{}_{i} (k)} \, a^{}_{j} (k)$ holds for\/
  $1 \leqslant i,j \leqslant \aaa$. These numbers are the
  dimensionless Fourier--Bohr coefficients, defined as
\[
    a^{}_{j} (k) \, = \, \alpha \lim_{r\to\infty} \myfrac{1}{2 r}
    \sum_{x \in \vL_{j} \cap [-r + c, r + c]} \ee^{- 2 \pi \ii k x},
\]
with\/ $\alpha = \dens (\vL)^{-1}$. Here, the convergence is uniform
in\/ $c\in\RR$.  This coefficient is independent of the choice of\/
$\vL \in \YY$.  As a consequence, for all\/ $k\in \RR$, the matrix\/
$\, \cI (k)$ is Hermitian, positive semi-definite, and has rank at
most\/ $1$. Moreover, the set of\/ $k$ with\/ $a^{}_{j} (k) \ne 0$ for
some\/ $j$ is at most a countable set.
\end{theorem}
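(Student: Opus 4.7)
The plan is to combine the convolution representation \eqref{eq:corr-as-conv} of the pair correlation measures with the classical Bombieri--Taylor identity for Bragg peak intensities, and to extract the cross terms by polarisation in the weight vector $\bs{u}$ that appears in Remark~\ref{rem:diffraction}.

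First, I would establish the uniform existence of the Fourier--Bohr coefficients. Since $\varrho$ is primitive, the dynamical system $(\YY, \RR, \mu)$ is strictly ergodic, so the corresponding Bombieri--Taylor theorem for uniquely ergodic systems (see \cite[Ch.~9]{TAO} and \cite{Lenz}) guarantees that, for every $k \in \RR$ and every $j$, the averages
\[
    \myfrac{1}{2r} \sum_{x \in \vL^{}_j \cap [c-r, c+r]}
    \ee^{-2\pi \ii k x}
\]
converge as $r \to \infty$, uniformly in $c \in \RR$ and with a limit that does not depend on the choice of $\vL \in \YY$. Multiplying by $\alpha = \dens(\vL)^{-1}$ yields the claimed coefficient $a^{}_j(k)$.

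Second, I would identify $I^{}_{ij}(k)$ via polarisation. Write $b^{}_j(k) = \dens(\vL)\ts a^{}_j(k)$ for the unnormalised Fourier--Bohr coefficient attached to $\delta^{}_{\!\vL^{}_j}$. For an arbitrary weight vector $\bs{u} \in \CC^{\aaa}$, the weighted Dirac comb $\omega^{}_u = \sum_i u^{}_i \ts \delta^{}_{\!\vL^{}_i}$ has autocorrelation $\gamma^{}_u = \dens(\vL) \sum_{i,j} \overline{u^{}_i}\, \vU^{}_{ij}\, u^{}_j$ by Remark~\ref{rem:diffraction}. Applied to $\omega^{}_u$, the Bombieri--Taylor identity asserts that the pure point mass of $\widehat{\gamma^{}_u}$ at $k$ equals
\[
    \bigl\lvert {\textstyle \sum_i} u^{}_i \, b^{}_i(k) \bigr\rvert^{2}
    \, = \, \sum_{i,j=1}^{\aaa} \overline{u^{}_i}\, u^{}_j \,
    \overline{b^{}_i(k)}\, b^{}_j(k) .
\]
On the other hand, by the definition of $\bs{I}(k)$ preceding Eq.~\eqref{eq:pp-recursion}, this same mass equals $\dens(\vL)^{2} \sum_{i,j} \overline{u^{}_i}\, I^{}_{ij}(k)\, u^{}_j$. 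Equating these two Hermitian forms on $\CC^{\aaa}$ and comparing coefficients of $\overline{u^{}_i}\, u^{}_j$ yields $I^{}_{ij}(k) = \overline{a^{}_i(k)}\, a^{}_j(k)$ for all $k \in \RR$ and all $i,j$.

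Third, the remaining assertions are immediate. The matrix $\cI(k)$ is the outer product $\overline{\bs{a}(k)}\, \bs{a}(k)^{T}$, hence Hermitian positive semi-definite with rank at most $1$ (and rank $0$ exactly when all $a^{}_j(k)$ vanish). For the countability claim, $a^{}_j(k) \ne 0$ forces $I^{}_{jj}(k) = \lvert a^{}_j(k) \rvert^{2} > 0$, so $k$ lies in the pure point support of the positive Radon measure $\widehat{\vU^{}_{\nts jj}}$; since the atoms of such a measure form an at most countable set and $\aaa$ is finite, the union over $j$ is also at most countable. The main obstacle is the rigorous justification of the Bombieri--Taylor step, in particular the passage from continuous averaging functions to the characteristic function of an interval and the control of the exceptional $k$; this is classical for strictly ergodic systems, and I would invoke \cite{Lenz,TAO} rather than reprove it.
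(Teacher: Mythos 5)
Your proposal is correct and follows essentially the same route as the paper: uniform existence of the Fourier--Bohr coefficients from strict ergodicity and continuity of eigenfunctions, the Bombieri--Taylor identity of \cite[Thm.~5]{Lenz} for the diagonal terms, and a polarisation argument for the cross terms. The only (cosmetic) difference is that you polarise in the weight vector $\bs{u}$ of the weighted Dirac comb and compare Hermitian forms, whereas the paper applies the complex polarisation identity \eqref{eq:polar} directly to the Eberlein convolutions $\vU^{}_{ij}$ and then invokes Lenz's theorem on each of the four resulting autocorrelations; both versions require the same input and yield $I^{}_{ij}(k)=\overline{a^{}_i(k)}\,a^{}_j(k)$.
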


\begin{proof}
  Our system is strictly ergodic, and all eigenfunctions are
  continuous \cite{Q}. This implies the uniform existence of the
  Fourier--Bohr coefficients \cite{Hof,Lenz}, as well as their
  independence of the choice of $\vL$.

  For $I^{}_{ii} (k)$, the claim is now a consequence of
  \cite[Thm.~5]{Lenz} applied to $\delta^{}_{\! \vL_i}$, which gives
  $I^{}_{ii} (k) = \lvert a^{}_{i} (k) \rvert^2$, where $I^{}_{ii} (k) > 0$
  at most for countably many $k\in\RR$. For $I^{}_{ij} (k)$
  with $i\ne j$, we recall Eqs.~\eqref{eq:polar} and
  \eqref{eq:corr-as-conv} to obtain
\[
    \vU^{}_{ij} \, = \, \myfrac{1}{4} \sum_{\ell=1}^{4}
    \ii^{\ell} \left[\bigl( \omega^{}_{j} + \ii^{\ell}
    \omega^{}_{i} \bigr) \circledast \bigl( \omega^{}_{j}
    + \ii^{\ell} \omega^{}_{i} \bigr)^{\! \widetilde{\quad}}\right]
\]
with $\omega^{}_{i} = \delta^{}_{\! \vL_i}$. Since the contribution of
the term in square brackets to $I^{}_{ij} (k)$ is given by
$\lvert a^{}_{j} (k) + \ii^{\ell} a^{}_{i} (k) \rvert^2$, again by
\cite[Thm.~5]{Lenz}, one finds
$I^{}_{ij} (k) = \overline{a^{}_{i} (k)} \, a^{}_{j} (k)$ as claimed.
\end{proof}

\begin{remark}
  The result of Theorem~\ref{thm:points} can be viewed as a variant of
  the Bombieri--Taylor observation on the connection between
  exponential sums (or amplitudes) and intensities. Its original
  version refers to the relations
  $I_{ii} (k) = \lvert a_i (k) \rvert^2$. For primitive inflation
  rules, they extend to all components of $\cI (k)$ as stated.  \exend
\end{remark}

\begin{remark}\label{rem:phase}
  The representation of $\cI (k)$ as a rank-$1$ matrix in
  Theorem~\ref{thm:points} is only unique up to a phase, which means
  that replacing $a (k)$ by $\ee^{- 2 \pi \ii \vartheta (k)} a (k)$
  results in the same $\cI (k)$. When $\vL,\vL' \in \YY$ are
  translates of one another, say $\vL' = t+ \vL$, one has
  $\vartheta (k) = {kt}$, but more complicated phase
  functions show up in general, due to the structure of $\YY$.
  This is an interesting problem in its own right, and has been
  studied extensively in the physics literature; see
  \cite[Sec.~4.2]{Trebin} and references therein.

  Still, our recursion has the consequence that we also
  have
\[
    \lvert a_i (k) \rvert \, = \, \lambda^{-1}
    \Bigl\lvert \sum_j B_{ij} (k) \,
    \overline{a_j (\lambda k)} \Bigr\rvert ,
\]
which can be further analysed when $B(k)$ is invertible. It can
provide valuable insight on how the modulus of the amplitudes behaves
under the inward or outward iteration.  \exend
\end{remark}

\subsection{Analysis of absolutely continuous part}

Let us now take a closer look at
$\bigl( \widehat{\vU} \ts \bigr)_{\mathsf{ac}}$.  By the
Radon--Nikodym theorem, each component
$\bigl(\widehat{\vU^{}_{ij}}\bigr)_{\mathsf{ac}}$ is represented by a
measurable and locally integrable density function $h_{ij}$ relative
to Lebesgue measure, though this is generally not an element of
$L^{1} (\RR)$.

For the next result, we consider
$W \! = \CC^{\aaa} \otimes \CC^{\aaa}$ also as a \emph{real} vector
space, then of dimension $2 \ts \aaa^2$, and split it as
$W \! = W_{\! +} \oplus W_{\! -}$ into the eigenspaces of the
$\RR$-linear map $C$ on $W$ defined by
$x\otimes y \mapsto \overline{y} \otimes \overline{x}$, where
$\ts\overline{.\vphantom{a}}\ts$ is complex conjugation; see the
Appendix for more.

\begin{lemma}\label{lem:ac-rec}
  Let\/ $\bs{h}$ be the vector of Radon--Nikodym densities that
  represents\/ $\widehat{\vU}_{\mathsf{ac}}$. Then, one has the relation
\[
     \bs{h} (k) \, = \, \myfrac{1}{\lambda} \ts \bs{A} (k) 
     \, \bs{h} (\lambda\ts k) \ts ,
\]     
    which holds for Lebesgue-a.e.\ $k\in\RR$.

    Moreover, the relations\/ $h_{ij} (-k) = h_{ji} (k) = \overline{
      h_{ij} (k)}$ and\/ $h_{ii} (k) \geqslant 0$ hold for a.e.\
       $k\in\RR$ and all\/ $1 \leqslant i,j \leqslant \aaa$. In
    particular, $\bs{h} (k) \in W_{\! +}$ for a.e.\ $k\in\RR$.
\end{lemma}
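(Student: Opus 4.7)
The plan is to derive each of the three conclusions by descending from the measure-level identities of Lemma~\ref{lem:type-recursion} and \eqref{eq:props-one}--\eqref{eq:props-two} to the level of Radon--Nikodym densities. No genuinely new input is required; the work lies in carefully tracking how densities transform under dilation and conjugation.

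First, I would apply Lemma~\ref{lem:type-recursion} with $\alpha = \mathsf{ac}$, which yields
\[
  \bigl(\widehat{\vU}\bigr)_{\mathsf{ac}} \, = \, \myfrac{1}{\lambda^{2}}\,
  \bs{A}(.) \bigl( f^{-1}\ts.\ts\widehat{\vU}\bigr)_{\mathsf{ac}} ,
\]
where, since $f^{-1}\ts.\ts$ preserves the Lebesgue decomposition, the ac-component on the right equals $f^{-1}\ts.\ts\widehat{\vU}_{\mathsf{ac}}$. As $f(x) = \lambda x$, a direct change of variables shows that if $\widehat{\vU}_{\mathsf{ac}}$ has density vector $\bs{h}(k)$, then $f^{-1}\ts.\ts\widehat{\vU}_{\mathsf{ac}}$ has density vector $k\mapsto \lambda\ts \bs{h}(\lambda k)$. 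Substituting this and comparing densities gives the claimed recursion $\bs{h}(k) = \frac{1}{\lambda}\ts\bs{A}(k)\ts \bs{h}(\lambda k)$ for a.e.\ $k$.

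Next, for the symmetry relations, I would combine the two structural facts about $\vU^{}_{mn}$ from \eqref{eq:props-one} and \eqref{eq:props-two}. Since each $\vU^{}_{mn}$ is a positive (hence real-valued) measure, one has $\widehat{\vU^{}_{mn}}(-k) = \overline{\widehat{\vU^{}_{mn}}(k)}$ in the distributional sense, which descends to $h_{mn}(-k) = \overline{h_{mn}(k)}$ on the ac-densities, for a.e.\ $k$. Independently, \eqref{eq:props-two} states $\overline{\widehat{\vU^{}_{mn}}} = \widehat{\vU^{}_{nm}}$, which translates to $\overline{h_{mn}(k)} = h_{nm}(k)$ a.e. Composing these two gives the full chain $h_{ij}(-k) = h_{ji}(k) = \overline{h_{ij}(k)}$. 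The non-negativity $h_{ii}(k) \geqslant 0$ a.e.\ follows from the positivity of $\widehat{\vU^{}_{mm}}$ (since $\vU^{}_{mm}$ is positive definite), whose ac part must have a non-negative Radon--Nikodym density.

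Finally, for membership in $W_+$, I would identify $\CC^{\aaa}\otimes\CC^{\aaa}$ with $\Mat(\aaa,\CC)$ via $e_i\otimes e_j \mapsto E_{ij}$. Under this identification together with the $\RR$-linear extension of $C\!:\ts x\otimes y\mapsto \overline{y}\otimes\overline{x}$, one checks on basic tensors $\alpha\ts(e_i\otimes e_j)$ with $\alpha\in\CC$ that $C$ acts on matrices as Hermitian conjugation; consequently $W_+$ corresponds precisely to the real subspace of Hermitian matrices in $\Mat(\aaa,\CC)$. The relation $h_{ji}(k) = \overline{h_{ij}(k)}$ derived above says exactly that the matrix $\bigl(h_{ij}(k)\bigr)_{1\leqslant i,j\leqslant\aaa}$ is Hermitian for a.e.\ $k$, proving $\bs{h}(k)\in W_+$ a.e. The main subtlety I anticipate is the $\RR$-versus-$\CC$-linearity bookkeeping in identifying $C$ with Hermitian conjugation; everything else is a routine transfer from measures to densities.
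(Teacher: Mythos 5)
Your proposal is correct and follows essentially the same route as the paper: the recursion comes from the $\mathsf{ac}$-part of Lemma~\ref{lem:type-recursion} plus the change of variables under $f(x)=\lambda x$ (you phrase it via the push-forward density formula, the paper via test functions, which is the same computation), and the symmetry, positivity and $W_{\!+}$ statements are obtained exactly as in the paper from \eqref{eq:props-one}, \eqref{eq:props-two} and the identification of $C$ with Hermitian conjugation. No gaps.
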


\begin{proof}
  Let $g$ be a continuous function with compact support in $\RR$.
  We have to determine what the relation from
  Lemma~\ref{lem:type-recursion}, for $\alpha = \textsf{ac}$, implies
  for the Radon--Nikodym densities of
  $\bs{\mu} \defeq \widehat{\vU}_{\mathsf{ac}}$.  The
  left-hand side of Lemma~\ref{lem:type-recursion} clearly reads
\[
     \bs{\mu} (g) \, = \int_{\RR} g(k) \dd \bs{\mu} (k) \, =
     \int_{\RR} g(k)  \, \bs{h}(k) \dd k \ts .
\]
With $f(k) = \lambda\ts k$ as before, we can calculate the right-hand
side of Lemma~\ref{lem:type-recursion} as follows,
\[
\begin{split}
    \lambda^{-2}\bigl( \bs{A} (.) \, 
    (f^{-1} \! . \ts \bs{\mu}) \bigr) (g) \, & = \, 
      \lambda^{-2} \int_{\RR} g\bigl(f^{-1} (k)\bigr) 
      \, \bs{A} \bigl(f^{-1} (k)\bigr) \dd \bs{\mu} (k)\\[1mm]
    &  = \, \lambda^{-2} \int_{\RR}  g  
      \bigl( \tfrac{k}{\lambda} \bigr) 
      \bs{A} \!  \bigl( \tfrac{k}{\lambda} \bigr)\, \bs{h} (k) \dd k 
      \, = \, \myfrac{1}{\lambda} \int_{\RR} g(k) \, \bs{A} (k) \,
        \bs{h} (\lambda \ts k) \dd k  \ts ,
\end{split}
\]
where the first step is just working out the definition of the measure
on the left, while the last is the result of a change of variable
transformation. This is the important step, as it leads to the
cancellation of one factor of $\lambda$ in the denominator. Comparing
the two expressions above, and observing that the test function $g$
was arbitrary, leads to the first claim by means of standard arguments
for densities.

The second claim is a consequence of Eq.~\eqref{eq:props-two} and
Lemma~\ref{lem:type-recursion}
for the absolutely continuous parts, in conjunction with the
properties of the correlation measures $\vU^{}_{\nts ij}$ from
Eq.~\eqref{eq:props-one}. Since the action of the mapping $C$ on
$\bs{h}$ is given by
$ (C \bs{h})^{}_{ij} (k) = \overline{h^{}_{ji} (k)}$, the previous
claim implies $(C\bs{h}) (k) = \bs{h} (k)$ for a.e.\ $k\in\RR$, hence
$\bs{h} (k) \in W_{\! +}\ts $ as claimed.
\end{proof}

To continue, we need the following well-known decomposition property
of positive semi-definite, Hermitian matrices, which we prove for
convenience.

\begin{fact}\label{fact:decompose}
  Let\/ $H = (h_{ij})^{}_{1\leqslant i,j \leqslant d}\in \Mat (d,\CC)$
  be Hermitian and positive semi-definite, with rank\/ $m$.  Then, all
  diagonal elements of\/ $H$ are non-negative. If\/ $h_{ii} =0$ for
  some\/ $i$, one has\/ $h_{ij} = h_{ji} = 0$ for all\/
  $1 \leqslant j \leqslant d$.  
    
  Whenever\/ $H\ne 0$, there are\/ $m\geqslant 1$ Hermitian, positive
  semi-definite matrices\/ $H_1, \ldots , H_m$ of rank\/ $1$ such
  that\/ $H = \sum_{r=1}^{m} H_r$ together with\/ $H_r \ts H_s = 0$
  for\/ $r\ne s$.
\end{fact}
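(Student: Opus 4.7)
The plan is to dispatch the three assertions in order, since each is a standard consequence of the defining inequality $v^* H v \geqslant 0$ together with the spectral theorem.

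\textbf{First} (non-negativity of the diagonal). Specialising the positive semi-definiteness to the standard basis vectors $e_i$ gives $h_{ii} = e_i^* H e_i \geqslant 0$ directly.

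\textbf{Second} (vanishing of rows/columns with zero diagonal). Suppose $h_{ii} = 0$. Testing positivity against $v = e_i + t\ts e_j$ for arbitrary $t\in\CC$ yields, using $h_{ji} = \overline{h_{ij}}$,
\[
   0 \,\leqslant\, v^* H v \,=\, 2\ts\mathrm{Re}(\bar{t}\ts h_{ji}) + \lvert t\rvert^2 h_{jj}.
\]
If $h_{ij}\neq 0$, choose $t = -\varepsilon h_{ji}/\lvert h_{ji}\rvert$ with $\varepsilon > 0$ small; the right-hand side becomes $-2\varepsilon\lvert h_{ji}\rvert + \varepsilon^2 h_{jj}$, which is negative for sufficiently small $\varepsilon$, a contradiction. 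Hence $h_{ij}=0$, and $h_{ji}=\overline{h_{ij}}=0$ follows from Hermiticity.

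\textbf{Third} (rank-one decomposition). By the spectral theorem, since $H$ is Hermitian, there exist an orthonormal basis $u_1,\ldots,u_d$ of $\CC^d$ and real eigenvalues $\lambda_1,\ldots,\lambda_d$ with $H u_r = \lambda_r u_r$. Positive semi-definiteness gives $\lambda_r \geqslant 0$, and the rank assumption means exactly $m$ of the $\lambda_r$ are strictly positive; reorder so that $\lambda_1,\ldots,\lambda_m > 0$. Defining $H_r \defeq \lambda_r\ts u_r u_r^*$, each $H_r$ is manifestly Hermitian and positive semi-definite (since $v^* H_r v = \lambda_r \lvert u_r^* v\rvert^2 \geqslant 0$), and is of rank one because $u_r\neq 0$ and $\lambda_r > 0$. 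The spectral resolution gives $H = \sum_{r=1}^{m} H_r$, and orthonormality of the $u_r$ yields
\[
   H_r H_s \,=\, \lambda_r \lambda_s\ts u_r (u_r^* u_s)\ts u_s^* \,=\, \lambda_r \lambda_s\ts \delta_{rs}\ts u_r u_s^* ,
\]
which vanishes for $r\neq s$, as required.

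No serious obstacle is expected; the only place demanding a modicum of care is the choice of the complex scalar $t$ in the second step, which must be tuned to align the phase of $h_{ji}$ so that the linear term dominates the quadratic one for small $\varepsilon$.
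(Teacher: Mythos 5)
Your proof is correct and follows essentially the same route as the paper: the diagonal non-negativity is immediate, and the rank-one decomposition is exactly the spectral resolution $H=\sum_r \lambda_r\, u_r u_r^*$ with orthonormality giving $H_rH_s=0$. The only (cosmetic) difference is in the zero-diagonal step, where you perturb with $v=e_i+t\,e_j$ and tune the phase of $t$, whereas the paper reads off $0=h_{ii}h_{jj}\geqslant h_{ij}h_{ji}=\lvert h_{ij}\rvert^2$ from the $2\times 2$ principal minor; both arguments are standard and your phase choice is carried out correctly.
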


\begin{proof}
  By Sylvester's criterion, $H$ positive semi-definite means that all
  principal minors are non-negative, hence in particular all diagonal
  elements of $H$. Assume $h_{ii}=0$ for some $i$, and select any
  $j\in \{ 1, \ldots, d\}$.  By semi-definiteness in conjunction with
  Hermiticity, one finds
\[
     0 \, = \, h_{ii} \ts h_{jj} \, \geqslant \, h_{ij} \ts h_{ji}
     \, = \, \lvert h_{ij} \rvert^2  \, \geqslant \, 0 \ts ,
\]  
 which implies the second claim. 
   
 Employing Dirac's notation, the spectral theorem for Hermitian
 matrices asserts that one has
 $H = \sum_{i=1}^{d} | v_i \rangle \ts \lambda_i \ts \langle v_i |$,
 where the eigenvectors $|v_i \rangle$ can be chosen to form an
 orthonormal basis (so $\langle v_i | v_j \rangle = \delta_{i,j}$ and
 $|v_i\rangle \langle v_i |$ is a projector of rank $1$), while all
 eigenvalues are non-negative due to positive semi-definiteness. The
 rank of $H$ is the number of positive eigenvalues, counted with
 multiplicities. Ordering the eigenvalues decreasingly as
 $\lambda^{}_{1} \geqslant \lambda^{}_{2} \geqslant \cdots \geqslant
 \lambda^{}_{d} \geqslant 0$,
 one can choose $H_r = | v_r \rangle \ts \lambda_r \ts \langle v_r|$
 for $1 \leqslant r \leqslant m$, and the claim is obvious.
\end{proof}

In order to profit from Lemma~\ref{lem:ac-rec}, we now perform a
\emph{dimensional reduction} as follows.\footnote{As we outline in
  more detail in the Appendix, one can alternatively work with the
  recursion from Lemma~\ref{lem:ac-rec} directly. However, the
  dimensional reduction leads to a stronger result in the sense that
  we also get a representation of $\bs{h}$ that resembles the
  situation of the pure point part.} Define the matrix
$\cH (k) = \bigl( h_{ij} (k) \bigr)_{1 \leqslant i,j \leqslant \aaa}$,
which is Hermitian and positive semi-definite, for a.e.\
$k\in\RR$. Simply switching from a notation with vectors of length
$\aaa^2$ to a version with $\aaa \! \times \! \aaa$ matrices, the
renormalisation relation can be rewritten as
\begin{equation}\label{eq:H-iter}
    \cH (k) \, = \, \lambda^{-1} B(k) \ts \cH 
    (\lambda k) B^{\dagger} (k) \ts .
\end{equation}
If we decompose $\cH (k) =\sum_{i=1}^{m} \cH_i (k)$ as a sum of
Hermitian, positive semi-definite matrices of rank~$1$ according to
Fact~\ref{fact:decompose}, every term is of the form
\begin{equation}\label{eq:H-decompose}
  \cH^{}_i (k) \, = \,
  v^{(i)} (k) \, \bigl(v^{(i)}\bigr)^{\dagger} (k)
\end{equation}  
where each $v^{(i)} (k)$ is a vector of functions from
$L^2_{\mathrm{loc}} (\RR)$.  Now, one has
\[
  B(k) \cH (k) B^{\dagger} (k) \, =
  \sum_{i=1}^{m} B(k) \cH_i (k) B^{\dagger} (k) \ts ,
\]
and we can study the simpler iteration
\[
    v (k) \, = \, \myfrac{1}{\sqrt{\lambda}}
       \, B(k) \ts\ts v (\lambda k)
\]
instead of \eqref{eq:H-iter}. Iterating this relation, for a.e.\
$k\in\RR$, yields
\begin{equation}\label{eq:inward}
  v \Bigl( \myfrac{k}{\lambda^n} \Bigr) \, = \,
  \myfrac{1}{\lambda^{n/2}} \, B \Bigl(
  \myfrac{k}{\lambda^{n-1}} \Bigr) \cdots
  B \Bigl( \myfrac{k}{\lambda} \Bigr) v(k) \ts ,
\end{equation}
which we call the \emph{inward iteration} for $v(k)$, with given
$k\in\RR$.  When $B(k)$ is invertible, at least for a.e.\ $k\in\RR$,
we get
\[
     v (\lambda k) \, = \, \sqrt{\lambda} \, B^{-1} (k) \ts\ts v(k)
\]
and the corresponding \emph{outward iteration},
\begin{equation}\label{eq:iter-out}
     v ( \lambda^n k) \, = \, \lambda^{n/2}
     B^{-1} (\lambda^{n-1} k) \cdots B^{-1} (\lambda k)
     B^{-1} (k) \ts\ts v (k) \ts ,
\end{equation}
which holds for a.e.\ $k\in\RR$ and is of particular interest to
us. For the existence of the matrix inverses on a set of full measure,
it suffices that $\det \bigl( B(k)\bigr) \ne 0$ for \emph{some}
$k\in\RR$, because the determinant is an analytic function in $k$ and
thus can then at most have isolated zeros.

This way, we can consider the corresponding matrix cocycle, whose
Lyapunov exponents determine the asymptotic growth behaviour of $v$
for $k\to\infty$. It is sufficient to look at the extremal ones, which
are derived from the $B^{(n)} (k)$ previously defined in
Eq.~\eqref{eq:B-powers} and Fact~\ref{fact:F-matrix}.  Writing out
$B^{(n)} (k)$ and its inverse, these extremal exponents are given by
\[
\begin{split}
   \chi^{}_{\max} (k) \, & = \, \log\sqrt{\lambda} \, + \,
       \limsup_{n\to\infty} \myfrac{1}{n} \log \big\|
       B^{-1} (\lambda^{n-1} k) \cdots B^{-1} (k) \big\| \\
   \chi^{}_{\min} (k) \, & = \, \log\sqrt{\lambda} \, + \,
       \liminf_{n\to\infty}\myfrac{1}{n} \log \big\|
       B (k) B(\lambda k) \cdots B(\lambda^{n-1} k) \big\|^{-1} ;
     \end{split}
\]
compare \cite[Eq.~2.2]{Viana}.  This follows from the general
definition, compare \cite{Viana}, applied to the iteration from
\eqref{eq:iter-out} by a simple calculation, where the additional
logarithmic term reflects the multiplication by $\sqrt{\lambda}$ in
each iteration step.  Now, one also obtains
\[
     \chi^{}_{\min} (k) \, = \, \log\sqrt{\lambda} \, - \, \chi^{B} (k)
\]
where
\begin{equation}\label{eq:chi-B}
    \chi^{B} (k) \, \defeq \, \limsup_{n\to\infty} \frac{1}{n}
    \log \big\| B^{(n)} (k) \big \|
\end{equation}
is the maximal Lyapunov exponent of the Fourier matrix cocycle,
$B^{(n)} (k)$. This way, we can formulate one of our central results
in terms of the asymptotic behaviour of the Fourier matrices of
$\varrho^n$ for large $n$ as follows, which is the expected extension
of the result for a binary example from \cite{BFGR} to general
alphabets, but requires a slightly different proof. Also, we state
it in terms of the Fourier matrix cocycle, $B^{(n)} (k)$, as this
seems the most natural object.

\begin{theorem}\label{thm:1D}
  Let\/ $\varrho$ be a primitive inflation rule, with inflation
  multiplier\/ $\lambda$, and let\/ $B(k)$ be the corresponding
  Fourier matrix, with\/ $\det (B(k)) \ne 0$ for some\/ $k\in\RR$.  If
  there is an\/ $\varepsilon > 0$ such that\/
  $\chi^{B} (k) \leqslant \log \sqrt{\lambda} - \varepsilon$ holds for
  a.e.\ $k\in\RR$, where\/ $\chi^{B} (k)$ is the maximal Lyapunov
  exponent of the Fourier matrix cocycle, the diffraction measure of
  the system cannot have an absolutely continuous part.
\end{theorem}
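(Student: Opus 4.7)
The plan is a contradiction argument: assume that $\widehat{\vU}^{}_{\mathsf{ac}} \ne 0$, equivalently that the matrix-valued density $\cH(k) = \bigl(h^{}_{ij}(k)\bigr)$ is non-zero on a set of positive Lebesgue measure. Iterating the matrix recursion \eqref{eq:H-iter} for $n$ steps yields
\[
    \cH(k) \, = \, \lambda^{-n}\ts B^{(n)}(k)\ts \cH(\lambda^{n} k)\ts
    \bigl(B^{(n)}(k)\bigr)^{\dagger}
\]
for every $n\in\NN$ and a.e.\ $k\in\RR$. Since $\cH(\lambda^{n}k)$ is positive semi-definite, applying the elementary operator-norm bound $\operatorname{tr}(A\ts M\ts A^{\dagger}) \leqslant \|A\|^{2}\operatorname{tr}(M)$ and taking traces gives the scalar inequality
\[
    \operatorname{tr}\cH(k) \, \leqslant \, \lambda^{-n}\bigl\|B^{(n)}(k)\bigr\|^{2}\operatorname{tr}\cH(\lambda^{n}k).
\]
Note that $\operatorname{tr}\cH$ detects the non-vanishing of $\cH$: by Fact~\ref{fact:decompose} one has $h^{}_{ii}=0$ for all $i$ if and only if $\cH=0$.

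Next, I would turn the pointwise bound $\chi^{B}(k)\leqslant\log\sqrt{\lambda}-\varepsilon$ into uniform exponential control on a set of positive measure. Fix $\delta\in(0,\varepsilon)$ and set
\[
    E^{}_{N} \, \defeq \, \Bigl\{k\in\RR : \bigl\|B^{(n)}(k)\bigr\|^{2}
    \leqslant \lambda^{n}\ts \ee^{-2n(\varepsilon-\delta)} \text{ for all }n\geqslant N\Bigr\}.
\]
The sets $E^{}_{N}$ are nested, and the hypothesis on $\chi^{B}$ implies that $\bigcup_{N} E^{}_{N}$ has full Lebesgue measure. Since $\operatorname{tr}\cH>0$ on a set of positive measure by our contradiction hypothesis, one can choose $N$ so large, and a bounded Borel set $K$ of positive measure inside $E^{}_{N}$ with $\operatorname{tr}\cH>0$ on $K$, that the exponential Lyapunov bound applies uniformly on $K$ for every $n\geqslant N$.

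Integrating the scalar inequality over $K$ and changing variables then yields
\[
    \int_{K} \operatorname{tr}\cH(k)\dd k \, \leqslant \,
    \ee^{-2n(\varepsilon-\delta)}\lambda^{-n}
    \int_{\lambda^{n} K} \operatorname{tr}\cH(k')\dd k'.
\]
Each $\widehat{\vU^{}_{\nts ii}}$ is the Fourier transform of a positive definite measure and hence is a positive, translation-bounded measure (compare the discussion after Lemma~\ref{lem:transformable}), and its absolutely continuous component, with density $h^{}_{ii}$, inherits translation boundedness. This gives $\int_{\lambda^{n} K}\operatorname{tr}\cH(k')\dd k' \leqslant C\ts \lambda^{n}$ with a constant $C=C(K)$ independent of $n$, so the right-hand side above is at most $C\ts \ee^{-2n(\varepsilon-\delta)}\to 0$. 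Since the left-hand side is strictly positive by the choice of $K$, this is a contradiction. Hence $\cH=0$ a.e., and the representation in Remark~\ref{rem:diffraction} shows that every diffraction measure $\widehat{\gamma^{}_{u}}$ has trivial absolutely continuous component.

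The main obstacle is the passage from an a.e.\ $\limsup$ hypothesis to a \emph{uniform} exponential estimate valid on a set of positive measure; this is handled by the nested exhaustion by $E^{}_{N}$ rather than by Egoroff's theorem, which is tailored to convergence. Once that uniformity is in place, the rest of the argument reduces to a routine $L^{1}$ estimate that exploits the translation boundedness of the pair correlation measures together with the gap $\varepsilon$ in the Lyapunov bound.
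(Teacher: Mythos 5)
Your argument is correct. It rests on the same underlying mechanism as the paper's proof --- a strictly positive Lyapunov gap forces exponential decay of $\lambda^{-n}\|B^{(n)}(k)\|^2$, which is incompatible with the translation boundedness of the absolutely continuous part of the correlation measures --- but the implementation is genuinely different. The paper first decomposes $\cH(k)$ into rank-one pieces $v^{(i)}(k)\,(v^{(i)})^{\dagger}(k)$, passes to the outward iteration \eqref{eq:iter-out} (which is why the hypothesis $\det(B(k))\ne 0$ for some $k$ appears in the statement), and then invokes an external $L^2$ growth-versus-boundedness lemma from \cite{BFGR} applied to each vector $v^{(i)}$ separately, using non-negativity of the summands $\lvert v^{(i)}_{\ell}\rvert^2$ to rule out cancellation. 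You instead keep the full matrix recursion \eqref{eq:H-iter}, iterate it inward, and control $\operatorname{tr}\cH$ directly via the elementary bound $\operatorname{tr}(AMA^{\dagger})\leqslant\|A\|^2\operatorname{tr}(M)$; Fact~\ref{fact:decompose} is then only needed to see that $\operatorname{tr}\cH$ detects non-vanishing. This buys you two things: the argument is self-contained (the change-of-variables estimate over $\lambda^n K$ replaces the cited lemma, and the passage from the a.e.\ $\limsup$ hypothesis to a uniform bound on a positive-measure set via the nested sets $E_N$ is made explicit, where the paper is terse), and it nowhere uses invertibility of $B(k)$, so the hypothesis $\det(B(k))\ne 0$ is not actually needed for this implication. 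One small point to keep consistent: the trace inequality requires the operator norm, while $\chi^{B}$ is norm-independent; since all norms on $\Mat(\aaa,\CC)$ are equivalent, the resulting constant is absorbed by the decaying exponential for large $n$, so this is cosmetic rather than a gap.
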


\begin{proof}
  If such an $\varepsilon>0$ exists, there is a
  $0 < \delta \leqslant \varepsilon $ such that
  $\| v ( \lambda^n k)\| $, for a.e.\ $k$, is bounded below by
  $C \ts \ee^{\delta n }$ as $n \to\infty$, where the constant can
  depend on $k$, but is always positive, so this lower bound grows
  exponentially. This allows an application of \cite[Lemma~9.3]{BFGR}:
  Translation boundedness of the diffraction measure means
  $\int_{0}^{a \lambda^m} \| v(x) \|^{2}_{2} \dd x = \cO (a
  \lambda^m)$ for any $a>0$ as $m\to\infty$, while a positive exponent
  means that there is some $\eta > 1$ with
  $\int_{0}^{a \lambda^m} \| v (x) \|^{2}_{2} \dd x \geqslant c^{}_{v}
  \ts ( \eta \lambda )^m$. These conflicting conditions are compatible
  only when $\| v \|^{2}_{2} = 0$, so $v (k) = 0$ for a.e.\ $k\in\RR$.

  For any choice of the weight vector $u \in \CC^{\aaa}$, the
  absolutely continuous part of the corresponding diffraction measure,
  $\widehat{\gamma}^{}_{\mathsf{ac}}$, is a translation-bounded
  measure. If $u^{}_{j} = \delta^{}_{j,\ell}$ for some fixed index
  $\ell$, the Radon--Nikodym density of
  $\widehat{\gamma}^{}_{\mathsf{ac}}$ is the locally integrable
  function $h^{}_{\ell\ell} \geqslant 0$.  Since a finite sum of
  translation-bounded measures is still translation bounded,
  $h_s \defeq \sum_{\ell=1}^{\aaa} h^{}_{\ell\ell}$ represents a
  translation-bounded, positive measure.

Now, by Fact~\ref{fact:decompose} in conjunction with
Eqs.~\eqref{eq:H-iter} and \eqref{eq:iter-out}, there is an
integer $m\leqslant \aaa$ such that $h_s$ is of the form
\[
     h_s (k) \, = \sum_{\ell=1}^{\aaa} \sum_{i=1}^{m}
     \big\lvert v^{(i)}_{\ell} (k) \big\rvert^2 .
\]
Since each summand is non-negative, there can be no cancellation
between the terms, and the exponential growth of any of them would
violate translation boundedness as explained above. Consequently, we
must have $v^{(i)} (k) = 0$ for a.e.\ $k\in\RR$ and for all
$1 \leqslant i \leqslant m$, and hence
$\widehat{\vU}_{\mathsf{ac}} = 0$. This implies
$\widehat{\gamma}^{}_{\mathsf{ac}} = 0$ as claimed.
\end{proof}

In fact, as shown in \cite{BFGR}, $\chi^{}_{\min} (k) >0$ for a
subset of full measure of some interval of the form 
$\bigl[ \frac{\varepsilon}{\lambda}, \varepsilon\bigr]$ with
$\varepsilon >0$ is already enough to rule out an absolutely
continuous diffraction component. This leads to the following
consequence.

\begin{coro}\label{coro:1D}
  If\/ the primitive inflation rule\/ $\varrho$, with inflation
  multiplier\/ $\lambda$ and the determinant condition on\/ $B(k)$ as
  before, leads to a system that displays a non-trivial diffraction
  component of absolutely continuous type, one must have\/
  $\chi^{}_{\min} (k) \leqslant 0$ on a subset of\/ $\RR$
  of positive measure. When\/ $\chi^{}_{\min} (k)$ is constant
  for a.e.\ $k\in\RR$, this constant must be\/ $\leqslant 0$.
  \qed
\end{coro}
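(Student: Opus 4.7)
The plan is to obtain Corollary~\ref{coro:1D} as an essentially immediate contrapositive of the strengthened criterion recalled just before the statement, namely the assertion from \cite{BFGR} that $\chi^{}_{\min} (k) > 0$ on a full-measure subset of some interval $[\varepsilon/\lambda, \varepsilon]$ with $\varepsilon > 0$ already suffices to force $\widehat{\gamma}^{}_{\mathsf{ac}} = 0$. This sharper input is considerably stronger than the uniform bound used in Theorem~\ref{thm:1D}, and shifts all substantive work into that cited lemma.

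First I would argue the contrapositive of the first statement: assume that the diffraction admits a non-trivial absolutely continuous component, and suppose for contradiction that $\chi^{}_{\min} (k) > 0$ holds for a.e.\ $k\in\RR$. Then, for any fixed $\varepsilon > 0$, the set $\{k \in [\varepsilon/\lambda, \varepsilon] : \chi^{}_{\min} (k) > 0\}$ already has full Lebesgue measure in that interval, so the \cite{BFGR} criterion applies and yields $\widehat{\gamma}^{}_{\mathsf{ac}} = 0$, contradicting the assumption. Hence the set $\{k \in \RR : \chi^{}_{\min} (k) \leqslant 0\}$ must have positive Lebesgue measure, which is the first claim.

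For the second statement, assume $\chi^{}_{\min}$ is a.e.\ equal to some constant $c$. Then either $c > 0$, in which case the \cite{BFGR} criterion on any single interval $[\varepsilon/\lambda, \varepsilon]$ again forces the absence of an absolutely continuous diffraction component, or $c \leqslant 0$. Under the standing hypothesis that a non-trivial absolutely continuous part is present, only the latter alternative is admissible, giving the claim.

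The only potential obstacle in writing this up carefully is to ensure that the invoked result from \cite{BFGR} is cited in precisely the form needed: that it applies to $\chi^{}_{\min}$ (and not only $\chi^{}_{\max}$), that the determinant condition $\det (B(k)) \ne 0$ for some $k\in\RR$ used throughout Section~\ref{sec:general} is enough to make the outward iteration \eqref{eq:iter-out} valid a.e., and that any choice of $\varepsilon > 0$ is admissible. Once these points are checked against the cited reference, no additional cocycle analysis is required beyond what has already been set up for Theorem~\ref{thm:1D}.
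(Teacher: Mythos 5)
Your proposal is correct and matches the paper's approach: the paper states the corollary with an immediate \qed, treating it exactly as the contrapositive of the strengthened criterion from \cite{BFGR} quoted in the sentence directly preceding it, which is precisely the argument you spell out. Your added care about verifying the exact form of the cited input (that it concerns $\chi^{}_{\min}$, that the determinant condition suffices for the outward iteration, and that any $\varepsilon>0$ works) is reasonable diligence but introduces nothing beyond what the paper already takes for granted.
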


This means that we are in a particularly good situation whenever
Oseledec's theorem applies, see \cite[Ch.~4]{Viana} for backgropund,
as is the case for constant-length substitutions.

\begin{remark}
  In \cite{BS2}, a variant of the cocycle $B^{n}(k)$ for $S$-adic
  systems is considered, called the \emph{spectral cocycle}. There, the
  main objects are spectral measures $\sigma^{}_{\nts f}$ of a certain
  class of functions associated to a suspension flow.  One of the main
  results is the expression of the lower local dimension
  $\underline{d} {\ts} (\sigma^{}_{\nts f}, x)$ in terms of the
  Lyapunov exponent of the spectral cocycle.  It is shown that the
  singularity of these measures is implied by the Lyapunov exponent
  being strictly bounded from above by $\log\sqrt{\lambda_{\bs{a}}}$
  for a.e.~$k\in\mathbb{R}^{d}$, see \cite[Cor.~2.4]{BS2}, where
  $\lambda_{\bs{a}}$ is the generalisation of the PF eigenvalue for
  $S$-adic systems (which is actually also defined as a Lyapunov
  exponent).

  Flows generated by deterministic substitutions form a subclass of
  $S$-adic systems, for which a dynamical version of the singularity
  result in Theorem.~\ref{thm:1D} for almost every choice of roof
  function is shown in \cite[Cor.~2.6]{BS2}. The one-dimensional
  systems we cover in this work pertain to the specific case where the
  roof function is given by the left PF eigenvector of the
  substitution matrix of $\varrho$.  \exend
  \end{remark}

\subsection{Further consequences}

In theory, the Lyapunov exponent could still be negative, thus
signifying an exponential decay in the outward direction. However,
since the Radon--Nikodym densities $\bs{h}(k)$ can be recovered from the
vectors $v(k)$, one can rule out the existence of negative exponents
via measure{\ts}-theoretic arguments when the elements $\vL$ of
the geometric hull $\YY$ satisfy some additional properties.

\begin{prop}\label{prop:no-neg}
   Assume that the elements of\/ $\YY$ are Meyer sets, and 
   assume that\/ $\widehat{\gamma}^{}_{\mathsf{ac}} \ne 0$.
   Then, its Radon--Nikodym density cannot decay at infinity.
\end{prop}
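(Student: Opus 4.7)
The plan is to exploit the rigidity that the Meyer property imposes on the autocorrelation and its Fourier transform, and then to conclude by an elementary non-decay property of almost-periodic functions.

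First, since every $\vL\in\YY$ is Meyer, $\vL-\vL$ is uniformly discrete and relatively dense, hence itself Meyer, and the autocorrelation $\gamma$, supported on $\vL-\vL$, admits a relatively dense set of $\varepsilon$-almost periods for every $\varepsilon>0$: the set of $t\in\RR$ with $\|\delta^{}_{t}*\gamma-\gamma\|\leqslant\varepsilon$ (in an appropriate norm on translation-bounded measures) is relatively dense. This is classical Meyer-set machinery, as developed in detail in \cite{S} and the references therein.

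Second, I would transfer this almost periodicity through the Fourier transform. The key input here is Strungaru's refinement of the Eberlein decomposition for Meyer sets \cite{S}: the Lebesgue pieces of $\widehat{\gamma}$ inherit almost-periodic rigidity from $\gamma$, and not merely translation-boundedness. Concretely, the Radon-Nikodym density $h$ of $\widehat{\gamma}^{}_{\mathsf{ac}}$ admits a representative that is Bohr almost periodic on $\RR$: for every $\varepsilon>0$, the set of $\tau\in\RR$ with $\sup^{}_{k\in\RR}|h(k+\tau)-h(k)|\leqslant\varepsilon$ is relatively dense.

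Third, I would conclude by the standard observation that a non-zero Bohr almost-periodic function cannot vanish at infinity. If $h\not\equiv 0$, pick $k^{}_{0}$ with $|h(k^{}_{0})|=c>0$ and set $\varepsilon=c/2$; the relatively dense set of $\varepsilon$-almost periods then produces $\tau$ with $|\tau|$ arbitrarily large and $|h(k^{}_{0}+\tau)|\geqslant c/2$. This precludes any decay at infinity and proves the proposition.

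The main obstacle is the second step: a careful justification that almost-periodicity of $\gamma$ (a statement about the support and coefficients of a discrete positive measure) descends selectively to the density of the absolutely continuous piece $\widehat{\gamma}^{}_{\mathsf{ac}}$. This requires invoking Strungaru's results on how the Eberlein decomposition interacts with the Lebesgue decomposition in the Meyer-set regime; once that interaction is in place, the remainder of the argument is soft.
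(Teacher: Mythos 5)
Your overall skeleton (invoke Strungaru's Meyer-set results, extract almost periodicity of the absolutely continuous part, conclude non-decay by the standard almost-periods argument) is the same as the paper's, but your second step --- which you yourself flag as the main obstacle --- contains a genuine gap, and the way to close it is not the one you propose. You want the Radon--Nikodym density $h$ itself to admit a Bohr almost periodic representative. That is too strong: Bohr almost periodic functions are uniformly continuous, whereas $h$ is merely a locally integrable density with no reason to have a continuous version. The correct statement, and the one the paper uses, lives at the level of measures: by \cite{S}, for Meyer sets one has $\widehat{\gamma}^{}_{\mathsf{ac}} = \widehat{\mu}$ for a pure point measure $\mu$ with Meyer set support, and then \cite[Cor.~11.1]{GLA} gives that $\widehat{\mu}$ is a \emph{strongly almost periodic measure}. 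One then mollifies: for $g\in C_{\mathsf{c}}(\RR)$ non-negative with small support, $f \defeq \widehat{\gamma}^{}_{\mathsf{ac}} * g$ is a genuine Bohr almost periodic function with $f\ne 0$. If $h$ decayed at infinity, so would the measure $\widehat{\gamma}^{}_{\mathsf{ac}}$ and hence $f$, forcing $f=0$ by your (correct) third step --- contradiction. Without the convolution step, your argument does not go through.

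Your first step is also misleading and, in the relevant regime, false: asserting that $\gamma$ has a relatively dense set of $\varepsilon$-almost periods in a norm on translation-bounded measures amounts to saying $\gamma$ is strongly (or norm) almost periodic, which would force $\widehat{\gamma}$ to be pure point and hence $\widehat{\gamma}^{}_{\mathsf{ac}}=0$, contradicting the hypothesis. For a Meyer set only the strongly almost periodic \emph{part} of $\gamma$ in the Eberlein decomposition enjoys this property; the almost periodicity you need is that of $\widehat{\gamma}^{}_{\mathsf{ac}}$ on the Fourier side, obtained directly from Strungaru's structure theorem rather than transferred from $\gamma$.
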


\begin{proof}
  When the point set $\vL \in \YY$ under consideration is a Meyer set,
  it was recently shown by Strungaru \cite{S} that the absolutely
  continuous part of its diffraction measure is of the form
  $\widehat{\gamma}^{}_{\mathsf{ac}} = \widehat{\mu}$ where $\mu$ is a
  pure point measure with Meyer set support. Moreover, it follows from
  \cite[Cor.~11.1]{GLA} that $\widehat{\mu}$ is a strongly almost
  periodic measure. Consequently, for any $g\in C_{\mathsf{c}} (\RR)$,
  the convolution $f\defeq\widehat{\mu}* g$ is a Bohr almost periodic
  function. By starting from a non-negative $g$ with sufficiently
  small support, we make sure that $f\ne 0$.
   
  If the Radon--Nikodym density $h$ of $\widehat{\mu}$ decays at
  infinity, then so does $\widehat{\mu}$ as a measure, and hence also
  $f$.  On the other hand, for any $\varepsilon >0$, the
  $\varepsilon$-almost periods of $f$ are relatively dense. This only
  leaves $f=0$, which is a contradiction, and $h$ cannot decay at
  infinity.
\end{proof}

For primitive inflation rules that lead to a hull $\YY$ of Meyer sets
with non-trivial absolutely continuous diffraction,
we can neither have $\chi^{}_{\min} < 0$, by
Proposition~\ref{prop:no-neg}, nor $\chi^{}_{\min} >0$, by
Corollary~\ref{coro:1D}, which has the following rather strong
consequence. For its formulation, let $\chi^{B} (v,k) \defeq
\limsup_{n\to\infty} \frac{1}{n} \log \| B^{(n)} (k) \ts v \|$.

\begin{coro}\label{coro:Pisot}
  Assume that the hull\/ $\YY$ contains Meyer sets only, and that
  the diffraction measure\/ $\widehat{\gamma}^{ }_{\mathsf{ac}}$ is
  nontrivial. Then, $\chi^{B} (v,k) = \log\sqrt{\lambda}$ holds for
  a set of\/ $k \in \RR$ of positive measure, for all\/
  $v = v^{(i)} (k)$ as in\/ $\cH_i$ from Eq.~\eqref{eq:H-decompose}. In
  particular, this criterion applies whenever the inflation
  multiplier\/ $\lambda$ of the primitive substitution is a PV number.
  Whenever\/ $\chi^{B} (v,k)$ is constant a.e., this constant must
  be\/ $\log \sqrt{\lambda}$.
\end{coro}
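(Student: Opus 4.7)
The plan is to combine Corollary~\ref{coro:1D} and Proposition~\ref{prop:no-neg} in order to sandwich the Lyapunov exponent $\chi^{B}(v^{(i)}(k),k)$ from both sides at the critical value $\log\sqrt{\lambda}$. The key identity is the outward iteration of the rank-one building blocks: picking a consistent branch of the spectral decomposition in \eqref{eq:H-decompose} and inverting \eqref{eq:H-iter} leads to
\[
    v^{(i)}(\lambda^{n} k) \, = \, \lambda^{n/2}\,\bigl[B^{(n)}(k)\bigr]^{-1} v^{(i)}(k) ,
\]
which describes how each eigendirection of $\cH(k)$ propagates under the inverse Fourier matrix cocycle.

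First I would extract a lower bound from Corollary~\ref{coro:1D}: the existence of a non-trivial $\widehat{\gamma}^{}_{\mathsf{ac}}$ forces $\chi^{}_{\min}(k)\leqslant 0$, equivalently $\chi^{B}(k)\geqslant\log\sqrt{\lambda}$, on a set $E^{}_{1}\subset\RR$ of positive measure. Next, I would use Proposition~\ref{prop:no-neg} together with the translation-boundedness of $\widehat{\gamma}^{}_{\mathsf{ac}}$ to pin the norms $\|v^{(i)}(\lambda^{n} k)\|$ between two positive constants: the non-decay of the Radon--Nikodym density $h=\sum_{i}\|v^{(i)}\|^{2}$ supplies the lower bound, while the same translation-boundedness argument used in the proof of Theorem~\ref{thm:1D} excludes any super-$\lambda^{n/2}$ growth along the orbit. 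Inserting this into the displayed iteration yields
\[
    \lim_{n\to\infty}\,\frac{1}{n}\log\bigl\|[B^{(n)}(k)]^{-1} v^{(i)}(k)\bigr\|
    \, = \, -\log\sqrt{\lambda}
\]
for $k$ in a set $E^{}_{2}$ of positive measure.

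The final step is to pass from this inverse-cocycle rate along the distinguished vector $v^{(i)}(k)$ to the forward rate $\chi^{B}(v^{(i)}(k),k)$. Because the family $\{v^{(i)}(\cdot)\}$ is transported equivariantly by the cocycle, each $v^{(i)}(k)$ sits in a single Oseledec subspace, on which the forward and inverse Lyapunov exponents are negatives of one another. Combined with Corollary~\ref{coro:1D}, this pins $\chi^{B}(v^{(i)}(k),k)=\log\sqrt{\lambda}$ on $E^{}_{1}\cap E^{}_{2}$, which still has positive measure. The a.e.~constancy claim follows at once: any a.e.\ constant value which is attained on a set of positive measure must equal that value. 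For the Pisot addendum I would invoke the classical fact, compare \cite{TAO} and references therein, that primitive inflations whose inflation multiplier is a PV number have hulls consisting of Meyer sets, so that the Meyer hypothesis is automatic.

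The main obstacle is precisely the last step, as for a generic vector $v$ the inverse-cocycle rate only captures the smallest Lyapunov exponent $v$ actually sees, which need not equal $-\chi^{B}(v,k)$. To overcome this one has to exploit that the vectors $v^{(i)}(k)$ are not arbitrary but are produced by a coherent equivariant spectral decomposition of $\cH(k)$, so that they naturally reside in individual Oseledec subspaces; making this rigorous is likely to require either an explicit Oseledec decomposition for the cocycle $B^{(n)}(k)$ over the dilation $k\mapsto\lambda k$ or an application of the ergodic-theoretic framework from \cite{BS2} referenced earlier.
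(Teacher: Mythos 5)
Your proposal follows essentially the same route as the paper: the first claim is obtained by sandwiching the outward rate of each $v^{(i)}$ between the non-growth forced by translation boundedness (Corollary~\ref{coro:1D}) and the non-decay forced by Proposition~\ref{prop:no-neg} for Meyer hulls, and the PV addendum rests on the fact that primitive PV inflations yield hulls of Meyer sets (the paper cites Sing's result that a fixed point is a relatively dense subset of a model set, together with minimality of the hull, where you instead cite \cite{TAO}; both are fine). The ``main obstacle'' you single out --- passing from the decay/growth rate of $[B^{(n)}(k)]^{-1}v^{(i)}(k)$ under the outward iteration to the forward directional exponent $\chi^{B}(v^{(i)}(k),k)=\limsup_n \frac{1}{n}\log\|B^{(n)}(k)\,v^{(i)}(k)\|$ --- is not addressed in the paper's proof either, which simply declares the first claim clear from the preceding discussion; so your write-up is, if anything, more explicit than the paper about where the remaining work (an Oseledec-type regularity statement for the vectors $v^{(i)}(k)$, or a reformulation of the exponent in terms of the outward iteration) would have to go.
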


\begin{proof}
  While the first claim is clear from the previous arguments, the
  second follows from the fact that the geometric realisation of a
  primitive Pisot (or PV) substitution always leads to a geometric
  hull with the Meyer property.

  Let us be more precise with the last point. Though we still do not
  know whether the Pisot substitution conjecture holds, we do know
  that the fixed point of a primitive PV inflation is a relatively
  dense subset of a model set, and hence a Meyer set; see \cite{Bernd}
  and references therein for the details. Since every hull of a
  primitive PV inflation is minimal and can be generated as the orbit
  closure of a fixed point (possibly under some power of the inflation
  rule), compare \cite[Sec.~4.2]{TAO}, the corresponding hull consists
  of Meyer sets only.

  The last claim of the corollary is clear.
 \end{proof}

In favourable situations, some of which will be discussed in the next
section, one can calculate the Lyapunov exponents explicitly, and then
apply Theorem~\ref{thm:1D} directly. Otherwise, one can look for upper
bounds to $\chi^{B} (k)$ in order to establish the estimate needed in
Theorem~\ref{thm:1D}.  Let us briefly explain one particularly useful
method that is based on a subadditivity argument; compare
\cite{Schmeling}.

Since the norm in Eq.~\eqref{eq:chi-B} is arbitrary, we may choose a
submultiplicative one, such as the spectral norm or the Frobenius norm.
Then, for any $m,n\in\NN$, one has
\[
    \| B^{(m+n)} (k) \| \, = \, \| B^{(m)} (k) B^{(n)} (\lambda^m k) \| 
    \, \leqslant \, \| B^{(m)} (k) \| \, \| B^{(n)} (\lambda^{m} k) \|
\]
and thus, with $L_n (k) \defeq \log \| B^{(n)} (k) \|$, also
\[
   L_{m+n} (k) \, \leqslant \, L_{m} (k) + L_{n} (\lambda^m k) \ts .
\]
Using arithmetic progressions, which correspond to averages along
sequences that are totally Bohr ergodic in the sense of
\cite{Schmeling}, one can invoke a standard argument known from
Fekete's subadditive lemma. Indeed, for a fixed $N\in\NN$, one has
\[
  \frac{L^{}_{q N +r} (k)}{q N + r} \, \leqslant \,
  \frac{q}{qN+r}  \biggl( \frac{1}{q} \sum_{\ell=0}^{q-1} L^{}_{N}
  (\lambda^{\ell N}k) \biggr)
   + \frac{L_r (\lambda^{q N} k)}{qN+r} \ts ,
\]
where $q\in\NN$ and $0\leqslant r < N$. As $q\to\infty$, under mild
conditions that are satisfied in our case, the first term with the
Birkhoff sum converges to the mean, namely $\frac{1}{N} \MM (L_N)$ as
detailed below, while the second converges to zero, both for a.e.\
$k\in\RR$.  One thus obtains the following bound for the maximal
Lyapunov exponent of $B^{(n)}$.  For further details of the proof, we
refer to \cite[Lemma~6.16]{BFGR} and the treatment in \cite{BHL}.

\begin{lemma}\label{lem:bound}
  Let\/ $B^{(n)} (.)$ be the Fourier matrix cocycle of a primitive
  inflation rule with inflation multiplier\/ $\lambda > 1$, and let\/
  $L_n (k) = \log \| B^{(n)} (k) \|$, for every\/ $n\in\NN$, be Bohr
  almost periodic. Then, for any\/ $N\in\NN$ and a.e.\ $k\in\RR$, one
  has
\[
   \chi^{B} (k) \, = \,
   \limsup_{n\to\infty} \myfrac{1}{n} \ts L_n (k)
   \, \leqslant \,  \myfrac{1}{N} \MM (L^{}_{\nts N}) \ts ,
\]
where\/ $\MM (f) \defeq \lim_{T \to\infty} \frac{1}{T} \int_{0}^{T}
f(t) \dd t$ is the \emph{mean} of the function $f$.  \qed
\end{lemma}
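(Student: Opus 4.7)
My plan is to follow the outline sketched immediately before the statement: exploit submultiplicativity of $L_n$ along arithmetic progressions of step size $N$, then pass the Cesàro average to the mean using the hypothesis that $L_N$ is Bohr almost periodic.

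\medskip

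First, fix $N\in\NN$. Iterated submultiplicativity gives, for any $q\in\NN$ and any $0 \leqslant r < N$, the estimate
\[
   L^{}_{qN+r} (k) \, \leqslant \, \sum_{\ell=0}^{q-1}
   L^{}_{N} (\lambda^{\ell N} k) \, + \, L^{}_{r} (\lambda^{qN} k) \ts ,
\]
obtained by $q$-fold application of $L^{}_{m+n}(k) \leqslant L^{}_{m}(k) + L^{}_{n}(\lambda^m k)$ with $m=N$, followed by one final step with $m = qN$ and $n=r$. Dividing by $qN+r$ yields the bound displayed in the text.

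\medskip

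Second, I pass to the limit $q\to\infty$ with $N$ fixed. The prefactor $\frac{q}{qN+r}$ tends to $\frac{1}{N}$. For the remainder, since $L^{}_{r}$ is Bohr almost periodic for each $r<N$, it is in particular bounded, so $\frac{L^{}_{r}(\lambda^{qN}k)}{qN+r} \to 0$ uniformly in $k$. The nontrivial step is to show that, for a.e.\ $k\in\RR$,
\[
   \myfrac{1}{q} \sum_{\ell=0}^{q-1} L^{}_{N}(\lambda^{\ell N} k)
    \,  \xrightarrow[q\to\infty]{} \,  \MM (L^{}_{\nts N}) \ts .
\]
For this, I would invoke a Birkhoff-type result for Bohr almost periodic functions sampled along the geometric progression $k, \lambda^N k, \lambda^{2N} k, \dots$, which, after the exponential change of variable $k=\ee^{s}$, becomes sampling along an arithmetic progression of step $N\log\lambda$ on the real line. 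By Weyl equidistribution (applied to each frequency of $L^{}_N$) together with the approximation of Bohr almost periodic functions by trigonometric polynomials in sup-norm, these geometric Birkhoff averages converge to $\MM(L^{}_N)$ for a.e.\ $k>0$; the analogous argument for $k<0$ and a separate check at $k=0$ complete the statement for a.e.\ $k\in\RR$. A reference to \cite[Lemma~6.16]{BFGR} and the treatment in \cite{BHL}, where essentially this ergodic ingredient is established under the totally Bohr ergodic sampling, can be cited to avoid repetition.

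\medskip

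Combining the three contributions on the right-hand side (the limit of the Birkhoff sum, the asymptotic prefactor, and the vanishing remainder) gives, for a.e.\ $k\in\RR$ and any $N\in\NN$,
\[
   \limsup_{n\to\infty} \myfrac{1}{n} L^{}_{n}(k) \, \leqslant \,
   \myfrac{1}{N} \MM (L^{}_{\nts N}) \ts ,
\]
which is the asserted bound on $\chi^{B}(k)$. The main obstacle, as anticipated, is the a.e.\ convergence of the geometric Birkhoff averages of a Bohr almost periodic function; everything else is bookkeeping from Fact~\ref{fact:F-matrix} and submultiplicativity of the chosen matrix norm.
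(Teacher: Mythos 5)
Your proposal is correct and follows essentially the same route as the paper, whose proof is the discussion preceding the lemma: submultiplicativity of $L_n$ along arithmetic progressions of step $N$, the Fekete-style splitting into a Birkhoff sum plus a vanishing remainder, and delegation of the a.e.\ convergence of the averages along the exponential sequence $\lambda^{\ell N}k$ to \cite[Lemma~6.16]{BFGR} and \cite{BHL}. Your added sketch of that ergodic ingredient (Weyl equidistribution for the lacunary sequence plus sup-norm approximation of $L_N$ by trigonometric polynomials) is exactly what those references supply.
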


Note that the mean of an almost periodic function always exists, and
that Bohr almost periodicity can be relaxed by an application of
Sobol's theorem; compare \cite{BHL}. In the more restrictive situation
that $\lambda$ is an integer and $L^{}_{\nts N}$ is thus $1$-periodic,
one can determine the mean via Birkhoff's ergodic theorem
\cite[Thm.~2.30]{EW} whenever $L^{}_{\nts N}$ is integrable in the
Lebesgue sense, as explained in some detail in \cite[Sec.~6.3]{BHL}.

In general, observing that $L^{}_{\nts N} $ is a quasiperiodic
function, its mean can be evaluated as an integral over the $D$-torus,
where $D$ is the algebraic degree of $\lambda$. Indeed, each matrix
entry $B_{ij} (k)$ is quasiperiodic, and can thus be represented as a
section through a function $P_{ij} \bigl( \tilde{k} \bigr)$ that is
$1$-periodic in each of the (possibly several) variables; compare
\cite[Secs.~3.1 and 5.2]{BGM} for details. This works simutaneously
for the entire matrix $B(k)$, and analogously for $B^{(N)} (k)$.
Employing the Frobenius norm, one then has
\begin{equation}\label{eq:mean}
   \myfrac{1}{N}\ts \MM \bigl( \log \| B^{(N)} (.) 
   \|^2_{\mathrm{F}} \bigr)
   \, = \, \myfrac{1}{N} \int_{\TT^D} \log \Bigl(
   \sum_{i,j=1}^{\aaa} \big\lvert P^{(N)}_{ij} 
   \bigl( \tilde{k} \bigr)
   \big\rvert^2 \Bigr) \dd \tilde{k} \ts ,
\end{equation}
where the $1$-periodic trigonometric polynomials $P^{(N)}_{ij}$ are
the entries of $B^{(N)}$. Effectively, one can now calculate the right-hand
side of the upper bound from Lemma~\ref{lem:bound} for increasing $N$,
and test this against the threshold value of $\log \sqrt{\lambda}$
from Theorem~\ref{thm:1D}. It turns out that this works quite well in
concrete examples; see \cite{BGM,BG-block} and
Sections~\ref{sec:abelian} and \ref{sec:higher}.

It is reasonable to expect that
$\chi^{B} (v, k) \leqslant \log\sqrt{\lambda}$ for a.e.\ $k\in \RR$
holds under more general circumstances, that is, possibly for all $v$,
and also beyond the Meyer set case.

\begin{theorem}\label{thm:geq}
   Let\/ $\varrho$ be a primitive inflation rule, with multiplier\/
   $\lambda$ and Fourier matrix\/ $B (k)$. Then, for a.e.\ $k\in\RR$,
   one has\/ $\chi^{}_{\min} (k) \geqslant 0$ or, equivalently, that\/
   $\chi^{B} (k) \leqslant \log \sqrt{\lambda}$.
\end{theorem}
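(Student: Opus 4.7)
The plan is to combine the subadditivity bound from Lemma~\ref{lem:bound} with an elementary second-moment calculation for the entries of the Fourier matrix cocycle. I would work with the submultiplicative Frobenius norm $\| \cdot \|^{}_{\mathrm{F}}$, so that Lemma~\ref{lem:bound} applies and yields, for every $N \in \NN$ and a.e.\ $k\in\RR$,
\[
   \chi^{B}(k) \, \leqslant \, \myfrac{1}{N}\ts\MM(L^{}_{\nts N})
   \, = \, \myfrac{1}{2N}\ts\MM \bigl( \log \ts\| B^{(N)}(.)
   \|^{2}_{\mathrm{F}} \bigr) .
\]
The task then becomes to control the right-hand side uniformly as $N \to \infty$, so that the claim follows by letting $N$ grow.

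Next, I would compute the mean of $\| B^{(N)} \|^2_{\mathrm{F}} = \sum_{i,j} \bigl\lvert B^{(N)}_{ij}\bigr\rvert^2$ entry by entry. Writing
\[
   \big\lvert B^{(N)}_{ij}(k) \big\rvert^2 \, = \!
   \sum_{t, t' \in T^{(N)}_{ij}} \! \ee^{2 \pi \ii (t - t') k} ,
\]
and using that the mean of $\ee^{2\pi \ii \alpha k}$ in $k$ vanishes for $\alpha \neq 0$, only the diagonal terms $t=t'$ survive. Since $T^{(N)}_{ij}$ is a set of distinct elements, this gives $\MM\bigl(\lvert B^{(N)}_{ij}\rvert^2\bigr) = \card\bigl(T^{(N)}_{ij}\bigr) = \bigl(M^N_{\varrho}\bigr)^{}_{ij}$. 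Summing over $i,j$ and invoking Perron--Frobenius for the primitive matrix $M^{}_{\varrho}$ with leading eigenvalue $\lambda$, one obtains $\MM\bigl(\| B^{(N)} \|^2_{\mathrm{F}}\bigr) \leqslant C \lambda^N$ for some constant $C>0$ independent of $N$.

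The last step is Jensen's inequality for the concave function $\log$: since $\MM$ is a positive linear functional of total mass $1$ on Bohr (or quasi{\ts})periodic functions and $\| B^{(N)}\|^2_{\mathrm{F}}$ is non-negative, one has
\[
    \MM\bigl( \log \| B^{(N)}\|^2_{\mathrm{F}} \bigr) \, \leqslant \,
    \log \ts \MM \bigl( \| B^{(N)}\|^2_{\mathrm{F}}\bigr) \, \leqslant \,
    \log (C \lambda^N) .
\]
Combining with the first display gives $\chi^{B}(k) \leqslant \frac{\log C}{2N} + \log \sqrt{\lambda}$, and letting $N \to \infty$ yields $\chi^{B}(k) \leqslant \log\sqrt{\lambda}$ for a.e.\ $k$. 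The equivalent formulation $\chi^{}_{\min}(k) \geqslant 0$ is then immediate from the identity $\chi^{}_{\min}(k) = \log\sqrt{\lambda} - \chi^{B}(k)$ recalled just before the statement.

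The main subtlety I anticipate is the justification of Jensen's inequality together with the torus-integral representation \eqref{eq:mean} when the trigonometric polynomial $\| B^{(N)}(k)\|^2_{\mathrm{F}}$ has zeros. Since this polynomial is not identically zero (its non-negative coefficients sum to $\sum_{i,j}(M^N_{\varrho})^{}_{ij} > 0$), its zero set is a proper real-analytic subvariety and hence has Lebesgue measure zero on the relevant torus $\TT^D$, so the integral of $\log \| B^{(N)}\|^2_{\mathrm{F}}$ is a well-defined extended real number (possibly $-\infty$, which only strengthens the bound) and Jensen's inequality applies in the standard way. A parallel check is that $L^{}_{\nts N}$ satisfies the Bohr almost periodicity (or quasi{\ts}periodicity) hypothesis of Lemma~\ref{lem:bound}, which is inherited from the entries of $B^{(N)}(k)$ being trigonometric polynomials in finitely many frequencies arising from the powers of $\lambda$.
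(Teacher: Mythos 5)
Your proposal is correct and follows essentially the same route as the paper's proof: Jensen's inequality applied to the mean of $\log \| B^{(N)} \|^2_{\mathrm{F}}$, the identification of $\MM\bigl(\lvert B^{(N)}_{ij}\rvert^2\bigr)$ with $\bigl(M^N_{\varrho}\bigr)_{ij}$ (your direct diagonal-term computation is precisely the Parseval step the paper uses on $\TT^D$), the Perron--Frobenius asymptotics for $\sum_{i,j}(M^N_{\varrho})_{ij}$, and the subadditivity bound of Lemma~\ref{lem:bound}. Your closing remark on the zero set of $\| B^{(N)}(\cdot)\|^2_{\mathrm{F}}$ is a sensible technical point that the paper leaves implicit.
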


\begin{proof}
   Choosing the Frobenius norm for convenience,
   the mean of $\log \| B^{(N)} \|^{}_{\mathrm{F}}$ can be calculated
   as explained above around Eq.~\eqref{eq:mean}.
   Now, via Jensen's inequality, we get
\[
    \exp \Bigl( \MM \bigl( \log \| B^{(N)} (.) 
    \|^2_{\mathrm{F}} \bigr) \Bigr) \, \leqslant 
    \int_{\TT^D} \sum_{i,j=1}^{\aaa} \big\lvert
    P^{(N)}_{ij} \bigl( \tilde{k} \bigr) \big\rvert^2 
    \dd \tilde{k} \, = \sum_{i,j} \big\| P^{(N)}_{ij} 
    \big\|^{2}_{2} \, = \sum_{i,j} \bigl( M^N \bigr)_{ij}
\]
with $M$ being the substitution matrix of $\varrho$. The last step
follows from Parseval's equation and the observation that the
coefficients of the trigonometric polynomials can only be $0$ or $1$,
due to the nature of the control points (see below for more), and
$B^{(N)} (0) = M^N$.

Now, since $\varrho$ is primitive, we know that
$\bigl( M^N \bigr)_{ij} \sim C \, \lambda^N v^{}_{i}\, u^{}_{j}$ holds
simultaneously for all $i,j$ as $N\to\infty$, where $C>0$ is some
constant, and $u$ and $v$ are the left and right PF eigenvectors of
$M$, both strictly positive and conveniently normalised (via
$\sum_i v^{}_{i} = \sum_i u^{}_{i} \ts v^{}_{i} = 1$, say). In fact,
the error term of this estimate is exponentially small because all
other eigenvalues of $M$ are strictly smaller than $\lambda$ in
modulus. This gives
\[
     \myfrac{1}{N}\ts \MM \bigl( \log \| B^{(N)} (.) 
   \|^2_{\mathrm{F}} \bigr)
   \, \leqslant \, \myfrac{1}{N} \log \bigl( C'
   \lambda^N \bigr) \, = \, \log (\lambda) +
   \myfrac{1}{N} \log (C')
\]
for some $C' >0$ and all sufficiently large $N$. Consequently,
we also have
\[
    \chi^{B} (k) \, \leqslant \, \liminf_{N\to\infty}
    \myfrac{1}{2 N} \ts \MM \bigl( \log \| B^{(N)} (.) 
   \|^2_{\mathrm{F}} \bigr) \, \leqslant \,
   \myfrac{1}{2} \log (\lambda) \, = \, 
   \log \sqrt{\lambda}
\]
from Lemma~\ref{lem:bound} for a.e.\ $k\in\RR$ as claimed.
\end{proof}

The previous theorem is a strong almost everywhere result that has to
be satisfied by generic primitive substitutions.  This, together with
Theorem~\ref{thm:1D}, provides a necessary criterion for the presence
of AC spectral components as follows. 

\begin{coro}\label{coro:ac-cond}
  Let\/ $\varrho$ be a primitive inflation rule, with multiplier\/
  $\lambda$ and Fourier matrix\/ $B (k)$, where we assume that\/
  $\det (B(k))\ne 0$ for some\/ $k$. Let\/ $B^{(n)} (k)$ be the
  Fourier matrix cocycle as before. If the diffraction measure of the
  hull defined by\/ $\varrho$ comprises a non-trivial absolutely
  continuous component, one has\/ $\chi^{B} (k) = \log \sqrt{\lambda}$
  for a subset of\/ $\RR$ of positive measure, or for a.e.\
  $k\in\RR$ whenever\/ $\chi^{B} (k)$ is almost surely constant.  \qed
\end{coro}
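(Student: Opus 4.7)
The plan is to derive Corollary~\ref{coro:ac-cond} as an immediate consequence of Theorem~\ref{thm:geq} combined with Corollary~\ref{coro:1D}, translating freely between the maximal Lyapunov exponent $\chi^{B}(k)$ of the Fourier matrix cocycle and the minimal exponent $\chi^{}_{\min}(k)$ via the identity $\chi^{}_{\min}(k)=\log\sqrt{\lambda}-\chi^{B}(k)$ recorded earlier in the text, which turns upper bounds on one exponent into lower bounds on the other and vice versa. The determinant hypothesis on $B(k)$ is exactly what is needed to invoke Corollary~\ref{coro:1D}, so the setup transfers verbatim.

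Concretely, I first assume that the diffraction measure of the hull defined by $\varrho$ admits a non-trivial absolutely continuous component. Corollary~\ref{coro:1D} then supplies a measurable set $P\subset\RR$ of positive Lebesgue measure on which $\chi^{}_{\min}(k)\leqslant 0$, which by the identity above is equivalent to the pointwise inequality $\chi^{B}(k)\geqslant \log\sqrt{\lambda}$ throughout $P$. On the other hand, Theorem~\ref{thm:geq}, which holds without any spectral assumption, produces a Lebesgue null set $N$ off which $\chi^{B}(k)\leqslant \log\sqrt{\lambda}$. Since $N$ has measure zero, $P\setminus N$ still has positive measure, and on this set the two inequalities pinch to $\chi^{B}(k)=\log\sqrt{\lambda}$. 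This proves the first claim.

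For the addendum, suppose moreover that $\chi^{B}(k)$ is almost surely equal to some constant~$c$. Intersecting this with the positive-measure set just constructed forces $c=\log\sqrt{\lambda}$, so $\chi^{B}(k)=\log\sqrt{\lambda}$ then holds for a.e.\ $k\in\RR$, completing the proof. Because the two ingredients are already proved in the paper, no substantial obstacle is anticipated; the only point to monitor is the bookkeeping of null sets when intersecting the positive-measure set from Corollary~\ref{coro:1D} with the full-measure set on which the upper bound of Theorem~\ref{thm:geq} is valid, and this is entirely routine.
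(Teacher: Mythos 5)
Your proposal is correct and follows essentially the same route the paper intends: the corollary is stated without an explicit proof, being presented as the immediate combination of the a.e.\ upper bound $\chi^{B}(k)\leqslant\log\sqrt{\lambda}$ from Theorem~\ref{thm:geq} with the positive-measure lower bound coming from the absence criterion (Theorem~\ref{thm:1D}, packaged as Corollary~\ref{coro:1D}), exactly the pinching argument you carry out. Your use of Corollary~\ref{coro:1D} rather than the raw contrapositive of Theorem~\ref{thm:1D} is the right choice, since the latter alone would only give $\chi^{B}(k)>\log\sqrt{\lambda}-\varepsilon$ on positive-measure sets for each $\varepsilon$, and the null-set bookkeeping you mention is indeed all that remains.
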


To date, few examples are known where a non-trivial AC component
occurs at all, namely the Rudin--Shapiro sequence \cite{Q,TAO} and its
relatives, which all have a similar structure \cite{NF-Hadamard}. To
the best of our knowledge, no example outside the class of
constant-length substitutions (and their topological conjugates, which
need not be of constant length) is known.

Let us briefly compare Corollary~\ref{coro:ac-cond} with another
necessary criterion, taken from \cite{BS}, for constant-length
substitutions, which reads as follows.

\begin{theorem}[{\cite[Thm.~1.1]{BS}}]\label{thm:Boris}
  Let\/ $\varrho$ be a primitive substitution of constant length,
  and let\/ $M$ be its substitution matrix, with PF eigenvalue\/
  $\lambda$. If\/ $\widehat{\gamma}^{}_{\mathsf{ac}} \ne 0$, there
  is an eigenvalue\/ $\alpha$ of\/ $M$ with\/ $\lvert \alpha \rvert
  = \sqrt{\lambda}$.  \qed
\end{theorem}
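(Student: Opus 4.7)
The plan is to combine the necessary criterion of Corollary~\ref{coro:ac-cond} with features specific to constant-length substitutions so as to force an eigenvalue of $M$ onto the circle of radius $\sqrt{\lambda}$. Since $\varrho$ has constant length $L$, one has $\lambda=L\in\NN$ and every displacement lies in $\{0,1,\ldots,L-1\}$, so each entry $B^{}_{ij}(k)$ is a $1$-periodic trigonometric polynomial with $0/1$ Fourier coefficients. We may therefore regard $B$ as a continuous map $\TT=\RR/\ZZ\to\Mat(\aaa,\CC)$, and $B^{(n)}$ as a continuous linear cocycle over the ergodic $L$-expansion $E^{}_{L}(k)=Lk\bmod 1$ on $(\TT,\mathrm{Leb})$ with $\log^{+}\|B\|$ bounded. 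Oseledec's multiplicative ergodic theorem (see \cite[Ch.~4]{Viana}) then yields an a.e.\ constant top Lyapunov exponent $\chi^{B}$.

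Applying Theorem~\ref{thm:geq} gives $\chi^{B}\leqslant\log\sqrt{\lambda}$, while Corollary~\ref{coro:ac-cond}, together with the hypothesis $\widehat{\gamma}^{}_{\mathsf{ac}}\neq 0$ and the a.e.\ constancy from Oseledec, forces the matching lower bound. Hence $\chi^{B}=\log\sqrt{\lambda}$ almost everywhere. Noting that each Fourier coefficient of $B^{(n)}$ lies in $\{0,1\}$, Parseval on $\TT$ gives
\[
    \int_{\TT}\bigl\|B^{(n)}(k)\bigr\|^{2}_{\mathrm{F}}\dd k
    \,=\,\sum_{i,j,x}(D^{(n)}_{x})^{}_{ij}
    \,=\,\sum_{i,j}(M^{n})^{}_{ij}
    \,\sim\,c\,\lambda^{n},
\]
so the $L^{2}$-average growth rate of the cocycle equals $\log\sqrt{\lambda}$ and matches its a.e.\ pointwise growth rate. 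In other words, Jensen's inequality employed in Lemma~\ref{lem:bound} is asymptotically sharp under our hypothesis.

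The final step is to convert this asymptotic saturation into a concrete eigenvalue of $M=B(0)$ on the circle $|\alpha|=\sqrt{\lambda}$. The natural device is the transfer (composition) operator $(Tf)(k)\defeq B(k)\,f(E^{}_{L}k)$ acting on $L^{2}(\TT,\CC^{\aaa})$: its iterated cocycle is precisely $B^{(n)}$, and its harmonic analysis on $\TT$ diagonalises into a direct integral of finite-dimensional fibres, the fibre over the frequency $0$ being $M$ itself. A spectral decomposition of $M$ into invariant subspaces (PF direction, eigenvalues of modulus $>\sqrt{\lambda}$, of modulus $=\sqrt{\lambda}$, of modulus $<\sqrt{\lambda}$), together with the sharpness in Jensen's inequality and the rigidity of the rank-$1$ structure established in Theorem~\ref{thm:points}, forces a non-trivial invariant subspace on which the cocycle grows asymptotically like $\lambda^{n/2}$. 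Tracing this invariant subspace back to the fibre at $0$ yields an eigenvalue of $M$ of modulus $\sqrt{\lambda}$.

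The hard part is certainly the final step: passing from an a.e.\ statement about the cocycle to an algebraic statement about the single matrix $M$ requires a genuine rigidity argument. A priori, one only knows that the cocycle grows like $\lambda^{n/2}$ on average, and there is no direct reason why this average growth should be realised by an eigenvector of $M$ itself. The bridge is furnished either by Queff\'elec's matrix Riesz product representation of the spectral measures (made available because $\lambda\in\NN$), or by the spectral cocycle perspective of \cite{BS2}; combined with a first-order perturbation analysis of $B(k)$ near $k=0$ and averaging along $E^{}_{L}$-orbits, this is the technical core of the original argument in \cite{BS}.
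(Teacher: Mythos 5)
This theorem is not proved in the paper at all: it is quoted verbatim from Berlinkov--Solomyak \cite{BS} with a \qed, and the paper's only contribution is the remark immediately following it, which explicitly contrasts the two criteria --- Theorem~\ref{thm:Boris} is ``a restriction on the Lyapunov exponents for the \emph{inward} iteration from Eq.~\eqref{eq:inward}'', whereas Corollary~\ref{coro:ac-cond} is ``a condition on the \emph{outward} iteration''. Your proposal tries to derive the former from the latter, and that is exactly where it breaks down.

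The genuine gap is the final step, and it is not merely technical. The quantity you pin down, $\chi^{B}(k)=\log\sqrt{\lambda}$ for a.e.\ $k$, is an ergodic average of the cocycle over the expanding map $E^{}_{L}$ on $\TT$; the point $k=0$ is a fixed point of $E^{}_{L}$ of Lebesgue measure zero, so the almost-everywhere exponent carries no information about the spectrum of the single fibre matrix $B(0)=M$. There is no ``direct integral of finite-dimensional fibres'' diagonalising the transfer operator with $M$ sitting over frequency $0$ in a way that would let you transfer the a.e.\ growth rate to an eigenvalue of $M$; the rank-one structure of Theorem~\ref{thm:points} concerns the pure point part and gives no rigidity for the absolutely continuous densities. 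The route that actually works (and is the content of \cite{BS}) is the inward iteration: in $v(k/\lambda^{n}) = \lambda^{-n/2}\,B(k/\lambda^{n-1})\cdots B(k/\lambda)\,v(k)$ the matrices converge to $B(0)=M$ as one iterates towards the origin, so the asymptotics of the density near $0$ are governed by $\lambda^{-n/2}M^{n}$, and local integrability together with non-triviality of $\bs{h}$ forces $M$ to have an eigenvalue of modulus exactly $\sqrt{\lambda}$. Your closing paragraph concedes that this ``technical core'' is deferred to \cite{BS}, which means the Lyapunov-exponent computation preceding it, while correct in itself, does no work towards the stated theorem --- it establishes the outward criterion of Corollary~\ref{coro:ac-cond} (which moreover needs the extra hypothesis $\det(B(k))\ne 0$ for some $k$, absent from Theorem~\ref{thm:Boris}), not the eigenvalue condition on $M$.
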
  

In hindsight, this criterion is a restriction on the Lyapunov
exponents for the \emph{inward} iteration from Eq.~\eqref{eq:inward}.
What we have in Corollary~\ref{coro:ac-cond}, under the
non-degeneracy assumption for $\det(B(k))$, is a condition on the
\emph{outward} iteration --- hence a criterion that applies to
primitive inflations in general.  Taken together, this suggests even
more that absolutely continuous components are only possible under
very restrictive conditions.

\section{Abelian bijective substitutions}\label{sec:abelian}

It is natural to ask whether the conditions in Theorem~\ref{thm:1D}
can be confirmed on a larger scale; that is, whether it can be proved
that such a bound exists for an entire class of substitutions without
computing the exponents explicitly. Fortunately, this is the case for
a specific class, which we elaborate here. In particular, we will show
the following result.

\begin{theorem}\label{thm:pos-abelian}
  Let\/ $\varrho$ be a primitive, bijective constant-length
  substitution that is aperiodic and whose IDA\/ $\cB$ is
  Abelian. Then, all Lyapunov exponents of the outward iteration
  \eqref{eq:iter-out} of\/ $\varrho$ are strictly positive, and the
  geometric realisation of the inflation tiling has singular
  diffraction. Moreover, all spectral measures of the dynamical
  spectrum are singular.
\end{theorem}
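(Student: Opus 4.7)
The plan is to exploit the Abelian structure of the IDA $\cB$ to simultaneously diagonalise the entire Fourier matrix cocycle, and then to evaluate the relevant Lyapunov exponents explicitly via logarithmic Mahler measures of one-variable trigonometric polynomials. Since $\varrho$ is bijective and of constant length $L = \lambda$, each digit matrix $D^{}_{j}$ with $0 \leqslant j \leqslant L-1$ is a permutation matrix associated with a column permutation $\sigma^{}_{j} \in \vS^{}_{\aaa}$. The matrices $\{ D^{}_{j} \}$ pairwise commute (because they generate the Abelian algebra $\cB$), so the group $G = \langle \sigma^{}_{0}, \ldots, \sigma^{}_{L-1} \rangle$ is Abelian. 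As $G$ is Abelian and finite, its permutation representation decomposes into one-dimensional characters, which means there exists a unitary $U$ such that $U^{*} D^{}_{j}\ts U = \diag \bigl( \chi^{}_{1} (\sigma^{}_{j}), \ldots, \chi^{}_{\aaa} (\sigma^{}_{j}) \bigr)$ simultaneously for all $j$, with characters $\chi^{}_{1}, \ldots, \chi^{}_{\aaa}$ of $G$ (listed with multiplicity).

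Consequently, $U^{*} B(k) \, U = \diag \bigl( \mu^{}_{1} (k), \ldots, \mu^{}_{\aaa}(k) \bigr)$ with trigonometric polynomials
\[
    \mu^{}_{\ell} (k) \, \defeq \, \sum_{j=0}^{L-1} \chi^{}_{\ell}
    (\sigma^{}_{j}) \ee^{2 \pi \ii j k}
\]
of unimodular coefficients, and Fact~\ref{fact:F-matrix} gives $U^{*} B^{(n)} (k) \, U = \diag \bigl( \prod_{j=0}^{n-1} \mu^{}_{\ell} (L^{j} k) \bigr)^{}_{\! \ell}$. Because $L \geqslant 2$ is an integer, the map $T \! : k \mapsto L k \bmod 1$ is ergodic on $\TT = \RR/\ZZ$ with respect to Lebesgue measure, and $\log \lvert \mu^{}_{\ell} \rvert$ is Lebesgue integrable, each $\mu^{}_{\ell}$ being a non-zero trigonometric polynomial with only finitely many zeros on $\TT$. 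Birkhoff's ergodic theorem then yields, for a.e.\ $k\in\TT$,
\[
   \lim_{n\to\infty} \myfrac{1}{n} \sum_{j=0}^{n-1} \log
   \lvert \mu^{}_{\ell} (L^{j} k) \rvert \, = \, m (\mu^{}_{\ell})
    \, \defeq \int_{0}^{1} \log \lvert \mu^{}_{\ell} (k)
    \rvert \dd k \ts ,
\]
the logarithmic Mahler measure of $\mu^{}_{\ell}$. Reading off \eqref{eq:iter-out} in the common eigenbasis of $U$, the Lyapunov exponent of the outward iteration along the $\ell$-th eigendirection equals $\chi^{\mathrm{out}}_{\ell} = \log\sqrt{\lambda} - m(\mu^{}_{\ell})$.

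It thus remains to show $m(\mu^{}_{\ell}) < \log\sqrt{L}$ for every $\ell$. For the trivial character, $\mu^{}_{\ell} (k) = \bigl( 1 - \ee^{2 \pi \ii L k} \bigr)/\bigl(1 - \ee^{2 \pi \ii k}\bigr)$ has all roots on the unit circle, so $m(\mu^{}_{\ell}) = 0$. For a non-trivial $\chi^{}_{\ell}$, Parseval gives $\| \mu^{}_{\ell} \|_{2}^{2} = \sum_{j} \lvert \chi^{}_{\ell} (\sigma^{}_{j}) \rvert^{2} = L$, and Jensen's inequality then yields $m(\mu^{}_{\ell}) \leqslant \frac{1}{2} \log \| \mu^{}_{\ell} \|_{2}^{2} = \log\sqrt{L}$, with equality if and only if $\lvert \mu^{}_{\ell} \rvert^{2} \equiv L$ on $\TT$. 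The key obstacle is excluding this \emph{flat polynomial} (Rudin--Shapiro-type) case for a non-trivial character on an Abelian column group: flatness forces the vanishing of all non-zero autocorrelations $\sum_{j} \chi^{}_{\ell} (\sigma^{}_{j} \sigma^{-1}_{j+m}) = 0$, which I expect to rule out by combining primitivity and aperiodicity of $\varrho$ with the fact that such perfect unimodular sequences are known to arise only when the column group is essentially non-Abelian (as is the case for Rudin--Shapiro itself, whose column group is the dihedral group $D^{}_{4}$ rather than an Abelian subgroup of $\vS^{}_{\aaa}$).

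Given the resulting uniform bound $\chi^{\mathrm{out}}_{\ell} \geqslant \varepsilon > 0$, the maximal Lyapunov exponent of the Fourier matrix cocycle satisfies $\chi^{B} (k) \leqslant \log\sqrt{\lambda} - \varepsilon$ almost everywhere, and Theorem~\ref{thm:1D} rules out an absolutely continuous component in the diffraction of the hull $\YY$. For the extension to the full dynamical spectrum (recorded in Theorem~\ref{thm:extend}), one invokes the standard link between diffraction and spectral measures for constant-length substitutions: varying the weight vector $u \in \CC^{\aaa}$ as in Remark~\ref{rem:diffraction} realises a family of diffraction measures that exhausts the maximal spectral type of the Koopman operator on $L^{2} (\YY,\mu)$, and since every such diffraction is singular by the same Lyapunov argument applied componentwise via Lemma~\ref{lem:type-recursion}, all dynamical spectral measures are singular as claimed.
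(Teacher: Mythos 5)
Your strategy --- simultaneous diagonalisation of the commuting permutation digit matrices, reduction of the cocycle to scalar products $\prod_j \mu^{}_{\ell}(L^j k)$, Birkhoff's ergodic theorem to identify the exponents as $\log\sqrt{L\ts} - \mathfrak{m}(\mu^{}_{\ell})$, and then Theorem~\ref{thm:1D} plus the maximal-spectral-type argument --- is exactly the route taken in the paper. However, there is a genuine gap at the single step that carries all the weight: you never actually prove $\mathfrak{m}(\mu^{}_{\ell}) < \log\sqrt{L}$ for the non-trivial characters. You correctly identify that Jensen plus Parseval gives $\mathfrak{m}(\mu^{}_{\ell}) \leqslant \log\sqrt{L}$ with equality forcing $\lvert \mu^{}_{\ell}\rvert^2 \equiv L$ on $\TT$, but you then only say you ``expect to rule out'' this flat case by appealing to primitivity, aperiodicity and a heuristic about flat unimodular sequences requiring non-Abelian column groups. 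That is not a proof, and the heuristic misses the point: the relevant fact is not group-theoretic at all, but the elementary observation that \emph{no} trigonometric polynomial with at least two non-zero coefficients can have constant modulus on the unit circle. Indeed, writing $\lvert \mu^{}_{\ell}(k)\rvert^2 = \sum_{m} c^{}_m\ts \ee^{2\pi\ii m k}$, the extreme coefficient is $c^{}_{L-1} = \chi^{}_{\ell}(\sigma^{}_{L-1})\ts\overline{\chi^{}_{\ell}(\sigma^{}_{0})}$, which is unimodular and in particular non-zero, so $\lvert \mu^{}_{\ell}\rvert^2$ cannot be constant. (Rudin--Shapiro polynomials are only \emph{nearly} flat; exactly flat ones of degree $\geqslant 1$ with unimodular coefficients do not exist, which is why the AC component there is tied to a genuinely two-dimensional, non-scalar block of the cocycle rather than to a flat eigenvalue polynomial.) The paper closes this step via the equivalent chain $\exp\bigl(\mathfrak{m}(\beta_j)\bigr) \leqslant \lVert\beta_j\rVert^{}_{1} < \lVert\beta_j\rVert^{}_{2} = \sqrt{L}$, where the middle inequality is strict precisely because $\beta_j$ is not a monomial.

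Two smaller points. First, your closing claim that varying the weight vector $u\in\CC^{\aaa}$ alone ``exhausts the maximal spectral type'' is too strong: one needs the spectral measures of characteristic functions of \emph{supertiles} as well, which the paper obtains from the diffraction data by a self-similarity scaling argument before invoking \cite[Prop.~7.2]{Q} or \cite[Thm.~4.4]{Bart} to build a measure of maximal type as a linear combination. Second, the cross-reference to Theorem~\ref{thm:extend} is misplaced; that result concerns substitutions with mixed bijective and constant columns, not the diffraction-to-dynamical-spectrum link. With the flatness step filled in as above and the maximal-type argument stated correctly, your proof coincides with the paper's.
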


Throughout this section, we assume $\varrho$ to be an aperiodic,
primitive, bijective substitution of length $L$ on an $\aaa$-letter
alphabet $\cA^{}_{\aaa}$, with corresponding Fourier matrix $B ( k )$
and associated IDA $\cB$.  Due to Fact~\ref{fact:IDA}, we make no
distinction between $\cB$ and the algebra generated by the digit
matrices $\{ D_{x} \mid x\in S_T \} $, which, in this setting, is the
algebra generated by the matrix representation of the permutations
$ \{ g^{}_{0}, g^{}_{1}, \ldots , g^{}_{L-1} \}$, hence
\[
   \cB \, = \, \big\langle \{D_{x} \mid x \in S_T \} \big\rangle
   \, = \, \big\langle \varPhi (G ) \big\rangle  ,
\]
where $g^{}_r$ is the inverse of the $r$-th column of the word vector
$\bigl(\varrho (a_i)\bigr)_{1 \leqslant i \leqslant \aaa}$, viewed as
an element of the permutation group $\vS^{}_{\aaa}$ of $\aaa$
elements,
$G= \langle g^{}_{0}, g^{}_{1}, \ldots , g^{}_{L-1} \rangle $ and
$\varPhi$ is the canonical representation via permutation
matrices. This follows since $P^T=P^{-1}$ for any permutation matrix
$P$, and hence
$D_x=\ \bigl(\varPhi (g^{}_x )\bigr)^T = \varPhi (g^{-1}_x ) $.  We
call $G$ a \emph{generating subgroup} for the algebra $\cB$, where it
is understood that $G$ is a subgroup of $\vS^{}_{\aaa}$.  The
primitivity condition on $\varrho$ translates to a condition on its
generating subgroup as follows.

\begin{lemma}\label{lem:generating}
  Any  generating subgroup\/  $G$ for\/ $\cB$ must
  be a transitive subgroup of\/ $\vS^{}_{\aaa}$.
\end{lemma}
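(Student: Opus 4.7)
The plan is to translate primitivity of $\varrho$ directly into a transitivity statement for the group $G$, exploiting the fact that, in the bijective constant-length setting, the digit matrices are permutation matrices and therefore products of them correspond to products in $G$.

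First, I would unpack the correspondence. Since $\varrho$ is bijective of length $L$, each column of the word vector $\bigl(\varrho(a_i)\bigr)_{1\leqslant i\leqslant \aaa}$ is a bijection on $\cA_{\aaa}$; let $\pi_r$ denote the permutation reading off the $r$-th column, so that $g_r = \pi_r^{-1}$ by the definition in the text. With the convention used to build $D_x$ from the word vector, one checks that $D_r = \varPhi(\pi_r)$, so that the product $D_{x_1} D_{x_2}\cdots D_{x_n}$ is the permutation matrix $\varPhi(\pi_{x_1} \pi_{x_2} \cdots \pi_{x_n})$, and its $(i,j)$-entry equals $1$ precisely when $\pi_{x_1}\pi_{x_2}\cdots\pi_{x_n}(j)=i$. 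Summing over all choices of the indices gives
\[
   \bigl(M_{\varrho}^{\,n}\bigr)_{ij} \, = \,
   \#\bigl\{ (x_1,\dots ,x_n) \in \{0,\dots ,L-1\}^n \bigm|
   \pi_{x_1}\pi_{x_2}\cdots\pi_{x_n}(j)=i \bigr\} ,
\]
which is merely the usual counting interpretation of $M_{\varrho}^{\,n}$ phrased through $G$.

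Next I would invoke primitivity of $\varrho$: there exists $n\in\NN$ such that $\bigl(M_{\varrho}^{\,n}\bigr)_{ij}>0$ for all $1\leqslant i,j\leqslant\aaa$. By the identity above, this means that for every pair $(i,j)$ there is some element $h\in\{\pi_{x_1}\cdots\pi_{x_n}\mid x_\ell\in\{0,\dots,L-1\}\}\subseteq G$ with $h(j)=i$. Consequently $G$ acts transitively on $\{1,\dots,\aaa\}$, which is the assertion. (One does not even need inverses here; in any case $G$ is a finite group, so the sub-semigroup generated by $\{\pi_0,\dots,\pi_{L-1}\}$ coincides with the subgroup they generate, which in turn coincides with $\langle g_0,\dots,g_{L-1}\rangle$ because $\pi_r=g_r^{-1}$.)

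There is no real obstacle: the statement is essentially a bookkeeping translation between the combinatorial content of primitivity of $M_{\varrho}$ and the group-theoretic content of transitivity of $G$. The only point that requires a small amount of care is the conventions relating $\pi_r$, $g_r$, and $D_r$ (the text defines $g_r$ as the inverse of the $r$-th column permutation), so that products of digit matrices correctly correspond to products in $G$ in the expected order. Once this bookkeeping is fixed, primitivity of $\varrho$ yields transitivity of $G$ immediately.
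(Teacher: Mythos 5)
Your argument is correct and is essentially the paper's proof run in the direct rather than the contrapositive direction: both rest on the identification $D_x=\varPhi(\pi_x)$ of digit matrices with permutation matrices of elements of $G$, together with the fact that the entries of $M_{\varrho}^{\,n}=\bigl(\sum_x D_x\bigr)^n$ record which pairs $(i,j)$ are connected by a product of column permutations, so that primitivity is exactly transitivity of the generated (sub)group. Your bookkeeping of the conventions ($\pi_r=g_r^{-1}$, semigroup equals group in the finite setting) is accurate, so the proposal stands as a valid proof.
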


\begin{proof}
Assume that $G$ is \emph{not} transitive. Then, there
are $a^{}_i, a^{}_{j} \in \cA^{}_{\aaa}$ such that $\sigma (a_i) = a_j$
cannot hold for \emph{any} $\sigma\in G$. Consequently, the
representation matrices will be $0$ in positions $i,j$, as are
all linear combinations of them, and hence all elements of
$\cB$ by Fact~\ref{fact:IDA}.

Now, this implies that  $a_j$ can never appear in any word of
the form $\varrho^n (a_i)$ with $n\in \NN$. This contradicts the 
assumed primitivity of $\varrho$, and our claim follows.
\end{proof}

The following property of Abelian subgroups of $\vS^{}_{\aaa}$ is well
known; see \cite[Cor.~10.3.3 and Thm.~10.3.4]{Scott}.

\begin{fact}\label{fact:trans}
  Any transitive Abelian subgroup of\/ $\vS^{}_{n}$
  must be of order\/ $n$. So, if\/ $G$ is an Abelian subgroup
  of\/ $\vS^{}_{\aaa}$ that is generating for the IDA\/ $\cB$ of\/ 
  $\varrho$, it must be of order\/ $\aaa$.   \qed
\end{fact}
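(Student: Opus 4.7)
The plan is to prove the two assertions in turn, with the group-theoretic claim being the main content. I will use the orbit--stabiliser theorem together with the commutativity hypothesis.

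First, let $G$ be a transitive Abelian subgroup of $\vS^{}_{n}$, acting on the set $\{1,\ldots,n\}$. By the orbit--stabiliser theorem, transitivity gives $\lvert G\rvert = n\cdot \lvert G^{}_{1}\rvert$, where $G^{}_{1}$ is the stabiliser of $1$. So the statement $\lvert G\rvert = n$ is equivalent to showing that the stabiliser $G^{}_{1}$ is trivial. This is the key place where commutativity enters.

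Next, I would argue that $G^{}_{1}$ is trivial. Pick any $g\in G^{}_{1}$ and any $x\in\{1,\ldots,n\}$. By transitivity, there is some $h\in G$ with $h(1)=x$. Using that $G$ is Abelian, one computes $g(x)=g(h(1))=h(g(1))=h(1)=x$, so $g$ fixes every point, hence $g=e$. This shows $G^{}_{1}=\{e\}$ and therefore $\lvert G\rvert = n$, proving the first claim. (In fact, this argument shows slightly more, namely that the action of $G$ is regular, though only the cardinality statement is needed here.)

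For the second assertion, I would simply invoke Lemma~\ref{lem:generating}: since $G$ is generating for the IDA $\cB$ of the primitive substitution $\varrho$, $G$ must be a transitive subgroup of $\vS^{}_{\aaa}$. Combined with the assumption that $G$ is Abelian, the first assertion applied with $n=\aaa$ yields $\lvert G\rvert = \aaa$.

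There is essentially no obstacle here, since this is a textbook fact; the only subtlety worth flagging is the transparent use of commutativity to push $g$ through $h$ in the displayed computation, which is what forces the stabiliser to act trivially on every orbit point and hence to be the identity.
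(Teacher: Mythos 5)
Your proof is correct and is the standard argument (trivial point stabilisers via commutativity, then orbit--stabiliser), which is precisely the content of the result in Scott's book that the paper cites without proof. The reduction of the second claim to Lemma~\ref{lem:generating} is also exactly what the paper intends, so there is nothing to add.
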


Bijective substitutions  have a rich structure due to the algebraic 
properties of their columns. These can be exploited to shed light 
on the multiplicity and mutual singularity of the spectral measures 
of the associated dynamical system; see \cite{Bart,Q}.
When the generating group is Abelian, these measures can be written 
down explicitly as Riesz products of polynomials arising from the 
characters $\rho\in \widehat{G}$ evaluated on the columns of $\varrho$. 
The following important result (actually, also its higher-dimensional 
analogue) was outlined in \cite{Q}, and was formally proved 
in \cite{Bart}. For binary block substitutions, it also follows
from \cite{NF,Nat-review}, and it was shown in \cite{squiral} by 
a different method.

\begin{theorem}[{\cite[Thm.~4.19]{Bart}}]\label{thm:Bart}
  Any primitive, bijective constant-length substitution that is
  aperiodic and Abelian has purely singular dynamical spectrum.  \qed
\end{theorem}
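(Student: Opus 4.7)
The plan is to diagonalise the Fourier matrix cocycle simultaneously via the character theory of $G$ and then estimate the resulting scalar cocycles by a Mahler--Jensen argument.

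Firstly, by Lemma~\ref{lem:generating} the generating subgroup $G\subseteq \vS^{}_{\aaa}$ is transitive; combined with Fact~\ref{fact:trans}, $G$ is Abelian of order exactly $\aaa$ and acts regularly on $\cA$, so the permutation representation $\varPhi$ is equivalent to the left-regular representation of $G$. The latter decomposes as $\bigoplus_{\chi \in \widehat{G}} \chi$, with each one-dimensional character appearing exactly once, and hence there is a unitary $U$ (essentially the character table of $G$) that simultaneously diagonalises all digit matrices. Consequently,
\[
    U^{*} B(k) \ts U \, = \, \diag\bigl(\beta^{}_{\chi}(k)\bigr)_{\chi\in\widehat{G}}
    \quad \text{with} \quad
    \beta^{}_{\chi}(k) \, = \sum_{r=0}^{L-1} \overline{\chi(g^{}_{r})}\, \ee^{2 \pi \ii r k} ,
\]
so the Fourier matrix cocycle $B^{(n)}(k)$ splits into $\aaa$ commuting scalar cocycles $\prod_{j=0}^{n-1} \beta^{}_{\chi}(L^{\ts j} k)$.

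Secondly, for each character $\chi$ I would compute the corresponding Lyapunov exponent via the Birkhoff ergodic theorem applied to the circle map $T \colon k \mapsto L k \bmod 1$ on $\TT$, which is a classical example of an ergodic transformation with respect to Lebesgue measure. Since $\log \lvert \beta^{}_{\chi} \rvert$ is locally integrable --- the trigonometric polynomial $\beta^{}_{\chi}$ has at most $L-1$ zeros on $\TT$, around which the singularities are merely logarithmic --- Birkhoff gives, for Lebesgue-a.e.\ $k\in\TT$,
\[
    \chi^{}_{\chi}(k) \, \defeq \, \lim_{n \to \infty} \myfrac{1}{n}
    \sum_{j=0}^{n-1} \log \bigl\lvert \beta^{}_{\chi}(L^{\ts j} k) \bigr\rvert
     \, = \int_{0}^{1} \log \bigl\lvert \beta^{}_{\chi}(t) \bigr\rvert \dd t
     \, = \, \log M(\beta^{}_{\chi}) \ts ,
\]
identifying $\chi^{}_{\chi}$ with the logarithmic Mahler measure of $\beta^{}_{\chi}$. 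Jensen's inequality then yields $\log M(\beta^{}_{\chi}) \leqslant \tfrac{1}{2}\log \| \beta^{}_{\chi} \|_{2}^{2} = \log \sqrt{L}$, because the coefficients $\overline{\chi(g^{}_{r})}$ all have modulus one.

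Thirdly, and this is the crucial step, I must upgrade this to the \emph{strict} inequality $M(\beta^{}_{\chi}) < \sqrt{L}$ for every $\chi \in \widehat{G}$. Equality in Jensen would force $\lvert\beta^{}_{\chi}(k)\rvert^{2}$ to be constant on $\TT$. However, expanding $\lvert\beta^{}_{\chi}\rvert^{2}$ as a trigonometric polynomial in $\ee^{2 \pi \ii k}$, its Fourier coefficient at frequency $L-1$ is $\overline{\chi(g^{}_{L-1})} \, \chi(g^{}_{0})$, which has modulus one and is in particular non-zero. Hence $\lvert\beta^{}_{\chi}\rvert^{2}$ cannot be constant, and the strict inequality follows. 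In view of the outward iteration \eqref{eq:iter-out}, each of the $\aaa$ Lyapunov exponents of the diagonal cocycle equals $\log \sqrt{L} - \log M(\beta^{}_{\chi}) > 0$, establishing the first claim and, through Theorem~\ref{thm:1D}, ruling out absolutely continuous diffraction. For the dynamical spectral measures, I would then invoke the Queff\'elec--Bartlett framework from \cite{Q,Bart}: for a primitive bijective constant-length substitution with Abelian IDA, the spectral measures attached to the character-labelled components of $L^{2}(\YY, \mu)$ can be represented (away from the trivial atomic contribution at $k=0$) as scalar Riesz products of the form $\prod_{j\geqslant 0} \lvert\beta^{}_{\chi}(L^{\ts j} k)\rvert^{2}/L$, and the classical Peyri\`ere criterion renders each such infinite product purely singular with respect to Lebesgue measure precisely when $M(\beta^{}_{\chi}) < \sqrt{L}$, which is exactly what we have just established.

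The principal obstacle lies in the third step; the strict inequality is available only because all character values sit on the unit circle. For non-Abelian generating groups, such as those underlying the Rudin--Shapiro substitution, genuinely flat trigonometric polynomials can appear in the cocycle and the conclusion fails --- consistent with the emergence of absolutely continuous spectrum in that case.
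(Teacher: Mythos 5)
Your proposal is correct, and its core is essentially the paper's own independent derivation of this statement, which is carried out via Theorem~\ref{thm:pos-abelian}: both arguments simultaneously diagonalise the digit matrices through the regular representation of the transitive Abelian group $G$, identify the Lyapunov exponents of the resulting scalar cocycles as $\log\sqrt{L}-\mathfrak{m}(\beta^{}_{\chi})$ via Birkhoff/Oseledec, and deduce strict positivity from a Jensen--Parseval estimate. Your strictness step (the Fourier coefficient of $\lvert\beta^{}_{\chi}\rvert^{2}$ at frequency $L-1$ has modulus one, so $\lvert\beta^{}_{\chi}\rvert^{2}$ is non-constant and Jensen is strict) is a clean, equivalent variant of the paper's chain $\exp(\mathfrak{m}(\beta^{}_{\chi}))<\|\beta^{}_{\chi}\|^{}_{1}<\|\beta^{}_{\chi}\|^{}_{2}=\sqrt{L}$, where the second inequality is strict because $\beta^{}_{\chi}$ is not a monomial. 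The genuine divergence is in the passage from diffraction to the dynamical spectrum: the paper stays inside its own framework, using the correspondence between the diffraction measure and spectral measures of tile indicator functions \cite{BLvE} together with the fact that a measure of maximal spectral type is a linear combination of spectral measures of tiles and supertiles \cite{Q,Bart}, so that Theorem~\ref{thm:1D} alone settles the dynamical claim; you instead invoke the Queff\'{e}lec--Bartlett Riesz-product representation of the character components of $L^{2}(\YY,\mu)$ and a Peyri\`{e}re-type singularity criterion. That route is legitimate, but note that (i) it re-imports the machinery of \cite{Q,Bart} whose conclusion is being reproved, forfeiting the independence that the paper's argument is designed to provide, and (ii) the phrase ``precisely when $M(\beta^{}_{\chi})<\sqrt{L}$'' overstates matters --- only the sufficiency direction is needed, and that is what the standard criterion $\|\beta^{}_{\chi}\|^{}_{1}<\|\beta^{}_{\chi}\|^{}_{2}$ (equivalent, by Jensen, to $\mathfrak{m}(\beta^{}_{\chi})<\log\sqrt{L}\,$) actually delivers. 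A minor aside: Rudin--Shapiro is not a bijective substitution, so it does not literally exhibit a non-Abelian generating group in the present sense; the bijective examples with Lebesgue components are rather those of Frank \cite{NF-Hadamard}.
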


In what follows, we prove that Theorem~\ref{thm:1D} holds for this
class, thus giving an independent proof of Theorem~\ref{thm:Bart} by
yet another method, which also extends to higher-dimensional block
substitutions \cite{BG-block}.  Note that we impose no assumptions on
the length or the height of $\varrho$.

\begin{proof}[Proof of Theorem~\textnormal{\ref{thm:pos-abelian}}]
  By assumption, the generating subgroup $G$ is Abelian, and all digit
  matrices $D_x$ commute with one another. Being permutation matrices,
  they are thus simultaneously diagonalisable, by a unitary matrix $U$
  say. In this case, the diagonal entries of $U D_x \ts U^{-1}$
  are values of characters of $G$, written as $\rho_i (g)$. Note that
  the $\rho_i$ are the irreducible representations of $G$ because the
  latter is Abelian.
  
  The matrix representation we start from is completely reducible by
  standard results \cite{JL}, as a sum of one-dimensional irreducible
  representations, which are the group characters in this case. In its
  diagonalised version, all values that occur are thus roots of unity,
  so $\lvert \rho_i (g) \rvert = 1$ for all $i$ and all $g\in G$. If
  $\varrho$ is of length $L$, the eigenvalues of $B (k)$ are then of
  the form
\[
      \beta_j  (k) \, = \sum_{m=0}^{L-1} 
      \, \overline{\rho_j (g_m)} \, u^m \ts ,
\]  
  which is a polynomial in $u=\ee^{2 \pi \ii k}$ of degree $L-1$
  whose coefficients are all on the unit circle.
  
  Since we are dealing with a constant-length substitution,
  $B^{(n)} (k)$ is $1$-periodic and defines a matrix cocycle over the
  compact dynamical system given by $k \mapsto L k$ modulo $1$ on
  $[0,1)$, which is ergodic with respect to Lebesgue
  measure. Oseledec's multiplicative ergodic theorem, compare
  \cite{BaPe,Viana}, then guarantees the existence of the Lyapunov
  exponents of our matrix cocycle for a.e.\ $k\in \RR$. Moreover,
  still for a set of full measure, the exponents are constant. They
  can be computed explicitly for each invariant subspace, where one
  obtains
\[
     \chi^{}_{j} \, = \, \log \sqrt{L} \, - 
     \lim_{n\to\infty} \frac{1}{n} \sum_{\ell =0}^{n-1}
     \log \lvert \beta_j (L^{\ell} k )\rvert
     \; \underset{k\in\RR}{\overset{\text{a.e.}}{=}} \;
     \log \sqrt{L} \, - \int_{0}^{1} \log
     \lvert \beta_j (k) \rvert \dd k  \, > \, 0 \ts .
\]  
Here, the integral is strictly less than $\log \sqrt{L}$ because
\[
     \exp \left( \int_{0}^{1} \log
     \lvert \beta_j (k) \rvert \dd k \right) \, < \,
     \int_{0}^{1}  \lvert \beta_j (k) \rvert \dd k
     \, = \, \| \beta_j \|^{}_{1}
     \, < \, \| \beta_j \|^{}_{2} \, = \, \sqrt{L} \ts ,
\]
where the first estimate follows from Jensen's inequality and is
strict, as is the second because $\beta_j $ is not a monomial
in $u$. The last equality is Parseval's identity; compare \cite{Neil}.

The diffraction measures of constant-length substitutions are
closely related to the spectral measures of characteristic functions.
More precisely, choosing $u^{}_{i} = \delta^{}_{i, \ell}$, the restriction
of $\widehat{\gamma}$ to $[0,1)$ is the spectral measure of the
characteristic function for the presence of a tile (letter)
of type $\ell$ at $0$; see \cite{BLvE} for details. Since our system
is self-similar, this argument extends to characteristic functions
of supertiles by a simple scaling argument.

Now, by \cite[Prop.~7.2]{Q} or \cite[Thm.~4.4]{Bart}, a spectral
measure of maximal type can be constructed by a linear combination of
spectral measures of characteristic functions of tiles and
supertiles. But if none of them comprise an absolutely continuous
component, the spectral measure of maximal type must be singular as
well.
\end{proof}

\begin{remark}
  The integral to which the second summand of $\chi^{}_j$ converges is
  known to be the \emph{logarithmic Mahler measure} of the polynomial
  $\beta_j$, which we denote by $\mathfrak{m} (\beta_j)$. Here, the
  logarithmic Mahler measure of a (complex) polynomial $p$ is defined
  as
\[
    \mathfrak{m} (p) \, \defeq \int_{0}^{1}
    \log \big\lvert p\bigl( \ee^{2 \pi \ii t} \bigr) 
    \big\rvert \dd t \ts .
\]
When $p(z) = a \prod_{i=1}^{n} (z-\alpha_i)$ is a non-zero polynomial
of degree $n$, one has
\[
   \mathfrak{m} (p)\, = \, \log \lvert a \rvert
   \, + \sum_{i=1}^{n} \log \bigl(\max \big\{ 1, 
   \lvert \alpha_i\rvert \big\} \bigr)
\]
by Jensen's formula. In particular, for monic polynomials,
$\mathfrak{m} (p)$ only depends on the roots of $p$ that lie outside
the unit circle; see \cite{EvWa} for general background.

This connection provides an effective tool to obtain good upper bounds
of Lyapunov exponents for constant-length substitutions, in particular
Abelian ones; compare \cite{BGM}. This step conveniently generalises
to higher-dimensional analogues of Abelian substitutions,\footnote{We
  refer to Example~\ref{ex:block} in the next section for an
  illustration.} then called Abelian \emph{block substitutions};
compare \cite{BG-block}. With that, one completely recovers Bartlett's
singularity result \cite{Bart}, for any dimension; see
Theorem~\ref{thm:higher-abelian} below. \exend
\end{remark}

\begin{example}
   Consider $\varrho^2$  with $\varrho$ from Example~\ref{ex:D4-IDA}, 
   which is a substitution on $\cA^{}_4$ with associated generating 
   subgroup $G=C_2\times C_2$. The Fourier matrix $B(k)$ reads
\[
   B(k) \, = \, \begin{pmatrix}
           1 & u^3 & u^2 & u \\ u^3 & 1 & u & u^2 \\
           u^2 & u & 1 & u^3 \\ u & u^2 & u^3 & 1
    \end{pmatrix} ,
\]
where $u=\ee^{2\pi \ii k}$, while the corresponding eigenvalues 
 are
\begin{align*}
    \beta_1 (k) &=1-u-u^2+u^3  , &  
    \beta_2 (k) &=1+u-u^2-u^3  , \\
    \beta_3 (k) &=1-u+u^2-u^3  , &  
    \beta_4 (k) &=1+u+u^2+u^3  ,
\end{align*}
with corresponding eigenvectors that are $k$-independent.  These four
polynomials are products of cyclotomic polynomials, and hence
$\mathfrak{m}(\beta_j)=0$ for $1\leqslant j \leqslant 4$.  This
results in a degenerate Lyapunov spectrum for a.e.\ $k\in\RR$, and
hence in $\chi^B = 0 < \log\sqrt{L} = \log (2)$.  \exend
\end{example}

Determining which substitutions have the same Lyapunov exponents is
generally difficult, especially since the equality of Mahler measures,
which only depend on the roots of a polynomial outside the unit
circle, does not imply that they come from the same polynomial.
However, as we shall see in the next result, a certain dichotomy gives
rise to families of substitutions that share the same Lyapunov
spectrum (before normalisation).

\begin{theorem}\label{thm:extend}
  Consider the\/ $\aaa$-letter constant-length substitution
  $\varrho\! : \, a_i \mapsto w_i$, with\/ $\lvert w_{i} \rvert = L$
  for all\/ $i$, and assume that the columns are either bijective or
  constant.  Suppose further that the
  group\/ $G'$ generated by the bijective columns is Abelian $($but
  not necessarily transitive in\/ $\vS^{}_{\aaa} \ts )$. Then, all
  Lyapunov exponents associated to\/ $\varrho$ are strictly positive.
\end{theorem}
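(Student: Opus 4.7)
My plan is to extend the argument of Theorem~\ref{thm:pos-abelian} by separating the action of the bijective columns on the trivial isotypic component of $G'$ from that on its orthogonal complement. First I will write
\[
   B(k) \, = \sum_{c \in \mathcal{P}} u^c \, P_{\pi_c} \,
    + \sum_{c \in \mathcal{C}} u^c \, e^{}_{\ell_c} \ts \mathbf{1}^{T} ,
\]
with $u = \ee^{2\pi\ii k}$, where $\mathcal{P}$ and $\mathcal{C}$ index the bijective and constant columns, $P_{\pi_c}$ is the permutation matrix of a bijective column, and $\ell_c$ is the value of a constant column.  Then I decompose $\CC^{\aaa} = V_0 \oplus V_1$, with $V_0$ the fixed subspace of the natural permutation representation of $G'$, spanned by the indicators of the $r$ orbits of $G'$. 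Since $\mathbf{1} \in V_0$, each rank-one summand $e^{}_{\ell_c} \mathbf{1}^{T}$ annihilates $V_1$, while the permutation part preserves both $V_0$ and $V_1$; consequently, $B(k)$ is block lower triangular in this decomposition.

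Next, because $G'$ is Abelian, the commuting permutation matrices $\{P_{\pi_c}\}^{}_{c \in \mathcal{P}}$ are simultaneously diagonalisable by a $k$-independent unitary. On $V_1$, this brings $B_{11}(k)$ into diagonal form with entries
\[
   \beta^{}_{\chi} (k) \, = \sum_{c \in \mathcal{P}}
   u^c \ts \chi(\pi_c) \ts ,
\]
indexed by the non-trivial characters $\chi$ of $G'$ appearing in $V_1$ with their respective multiplicities. On $V_0$, every $P_{\pi_c}$ acts as the identity, so $B_{00}(k) = \beta_0(k) \ts I^{}_r + v(k) \ts \mathbf{1}^{T}$ is a rank-one perturbation of a scalar, with $\beta_0 (k) = \sum_{c \in \mathcal{P}} u^c$. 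The hyperplane $\ker \mathbf{1}^{T} \subset V_0$ is $B_{00}$-invariant with scalar action $\beta_0$, while the one-dimensional quotient carries $\beta_{\mathrm{full}} (k) \defeq \sum_{c=0}^{L-1} u^c$. Combining these invariant flags, a single $k$-independent basis change renders $B(k)$ lower triangular with diagonal entries $\beta_{\mathrm{full}}$ (once), $\beta_0$ (with multiplicity $r-1$), and each $\beta^{}_{\chi}$ (with its multiplicity in $V_1$).

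The base dynamics $k \mapsto L k$ modulo one on $[0, 1)$ is ergodic for Lebesgue measure, and the entries of $B(k)$ are trigonometric polynomials, hence log-integrable. Oseledec's multiplicative ergodic theorem, applied via the invariant flag of this triangular cocycle, then yields the Lyapunov spectrum of $B^{(n)}$ as the list $\{\mathfrak{m}(\beta^{}_{*})\}$ of Mahler measures of the diagonal entries; the corresponding exponents of the outward iteration \eqref{eq:iter-out} are therefore $\log\sqrt{L} - \mathfrak{m}(\beta^{}_{*})$.  To conclude strict positivity, I would bound each Mahler measure strictly below $\log\sqrt{L}$: $\mathfrak{m}(\beta_{\mathrm{full}}) = 0$ since $\beta_{\mathrm{full}}$ factors as a product of cyclotomic polynomials, and for any other $\beta^{}_{*}$ (with at most $|\mathcal{P}|$ non-zero terms, all of unit modulus), Jensen's inequality together with Parseval's identity gives
\[
   \exp \ts \mathfrak{m}(\beta^{}_{*}) \, \leqslant \, \|\beta^{}_{*}\|^{}_{1}
   \, \leqslant \, \|\beta^{}_{*}\|^{}_{2} \, = \,
    \sqrt{|\mathcal{P}|} \, \leqslant \, \sqrt{L} \ts .
\]
Whenever $\beta^{}_{*}$ has more than one non-zero term, both estimates are strict, because a polynomial with constant modulus on the unit circle must be a monomial; in the remaining (monomial) case one has $\mathfrak{m}(\beta^{}_{*}) = 0 < \log\sqrt{L}$ directly. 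The main obstacle will be the triangular structure of $B(k)$ on the $r$-dimensional block $V_0$, which requires the rank-one perturbation analysis absent in the purely bijective setting of Theorem~\ref{thm:pos-abelian}; once it is in place, the Mahler-measure bound proceeds in parallel with that proof.
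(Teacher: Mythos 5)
Your proposal is correct, and its skeleton coincides with the paper's: the splitting $B = B_{\mathsf b} + B_{\mathsf c}$, the observation that the constant columns annihilate every zero-component-sum vector (so the non-trivial character eigenvalues $\beta_\chi$ of $B_{\mathsf b}$ survive as eigenvalues of $B$ with $k$-independent eigenvectors), and the final bound $\mathfrak{m}(\beta_*) < \log\sqrt{L}$ via Jensen and Parseval are all exactly what the paper does. Where you genuinely diverge is in handling the trivial isotypic block $V_0$. The paper identifies the one uninherited eigenvalue $\sum_{m=0}^{L-1}u^m$ from the trace formula, notes that its eigenvector is $k$-dependent, and therefore extracts the corresponding exponent indirectly, by Lyapunov forward regularity: the sum of all exponents equals $-\int_0^1 \log\lvert\det B(k)\rvert\dd k$, and subtracting the $\aaa-1$ known exponents forces the last one to be $-\mathfrak{m}\bigl(\sum_m u^m\bigr)=0$. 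You instead exhibit an explicit $k$-independent invariant flag --- $V_1 \subset V_1\oplus(V_0\cap\ker\mathbf{1}^T)\subset \CC^{\aaa}$, using that $\mathbf{1}^T$ is a common left eigenvector with eigenvalue $\beta_{\mathrm{full}}$ --- which triangularises the cocycle and lets you read off \emph{all} exponents at once as Mahler measures of the diagonal entries, via the standard result on triangular cocycles over an ergodic base. This is arguably cleaner: it sidesteps the regularity argument entirely and makes the multiplicity bookkeeping in the non-transitive case ($\beta_0$ with multiplicity $r-1$, one copy promoted to $\beta_{\mathrm{full}}$) automatic, whereas the paper has to argue separately that ``all but one copy'' of $\sum_{m\in S_{\mathsf b}}u^m$ survive with eigenvectors mixed across orbit blocks. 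Two cosmetic points: your $B_{00}$ should be written with the functional $\mathbf{1}_{O_i}\mapsto\lvert O_i\rvert$ rather than the literal all-ones row vector in the orbit-indicator basis (the kernel and quotient are unaffected), and, like the paper, you implicitly need at least one bijective column so that $\det B(k)\not\equiv 0$; this is guaranteed by the standing primitivity assumption of the section.
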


\begin{remark}
  We stress that the set of constant-length substitutions satisfying
  the conditions of Theorem~\ref{thm:extend} is a subset of the
  substitutions with at least one constant column (also known as a
  \emph{coincidence}). Such substitutions, by Dekking's criterion
  \cite{Dekking}, have pure point spectrum. What we have confirmed
  here, using our method via Lyapunov exponents, is the singularity of
  the spectrum for this specific subset. Though this is a weaker
  result, it is interesting in its own right, as it extends to other
  cases where Dekking's criterion yields no answer.  \exend
\end{remark}

\begin{proof}[Proof of Theorem~\textnormal{\ref{thm:extend}}]
  From the premise, the Fourier matrix of $\varrho$ can be decomposed
  into
\[
   B ( k ) \, = \, B_{\mathsf{b}} (k) + B_{\mathsf{c}} (k) \ts ,
\]
where $B_{\mathsf{b}} (k )$ and $B_{\mathsf{c}} (k)$ are generated by
the bijective and constant columns, respectively. This
gives a partition of the positions as
$\{ 0, \ldots, L-1\} = S_{\mathsf{b}} \, \dot{\cup} \,
S_{\mathsf{c}}$.
The idea of the proof now is to show that all eigenvalues of
$B_{\mathsf{b}}(k)$ except one (and their corresponding eigenvectors)
are essentially inherited by $B(k)$. We begin by illustrating how this
works for cases when $G'$ is transitive, and later describe what
changes in the case when it is not.

It follows from Theorem~\ref{thm:pos-abelian} that
$B_{\mathsf{b}} (k)$ has $\aaa$ linearly independent eigenvectors that
do not depend on $k$. Furthermore, $\aaa -1$ of them have a component
sum equal to zero, with the remaining eigenvector being
$v^{}_{\aaa}= (1,1, \ldots,1)^T$. This property follows from the facts
that these eigenvectors can directly be constructed from the character
table of $G'$ and that $\sum_{g\in G} \rho(g)=0$ for all irreducible
representations $\rho\in\widehat{G'}$ except the trivial one.

Consider any eigenvector $v$ of $B_{\mathsf{b}} (k )$, with eigenvalue
$\beta (k)$ say, with zero component sum. Observe
that we can write $B_{\mathsf{c}} (k)$ as 
\[
   B_{\mathsf{c}} (k) \, = \sum_{z\in S_{\mathsf{c}}}
   \ee^{2\pi \ii z k} R_{a(z)} \ts , \quad
   1 \leqslant a(z) \leqslant \aaa \ts ,
\]
where the matrix $R_m$ has entries $1$ in the $m$-th row and $0$
everywhere else. Consequently, $R_{m} v = 0$ for all
$1\leqslant m \leqslant \aaa$, which implies
$B_{\mathsf{c}} (k ) \ts v =0$. But this means that $v$ is also an
eigenvector of $B (k )$, with the same eigenvalue $\beta (k)$.

As in Theorem~\ref{thm:pos-abelian}, the eigenvalues of 
$B_{\mathsf{b}}$ can be written in terms of the characters of $G'$,
\[
   \beta_{j} (k) \, = \sum_{m\in S_{\mathsf{b}}}
   \overline{\rho^{}_{j} (g^{}_{m} )} \, u^m ,
\]
which is always a polynomial in $u=\ee^{2\pi \ii k}$ of degree at most
$L-1$. All its coefficients are either $0$ or have modulus $1$.
Parseval's equation then once again guarantees that the Lyapunov
exponents arising from these eigenvalues are strictly less than
$\log\sqrt{L}$. The maximal Lyapunov exponent is achieved for
some $j$, which in turn satisfies
\[
   \chi^{B} \, = \, \chi^{}_j \, = \,
   \mathfrak{m} ( \beta_{j} ) \,  < \,  \log\sqrt{L} \ts .
\]
The $\aaa -1 $ exponents shared by $B$ and $B_{\mathsf{b}}$ clearly
satisfy this bound. The idea is now to invoke Lyapunov forward
regularity to show that the last exponent is zero, which is done prior
to (additively) normalising with $\log \sqrt{L}$. This will confirm
that $B$ and $B_{\mathsf{b}}$ indeed share the same set of exponents.

To this end, we note that the $\aaa$-th eigenvalue of $B(k)$ is
$\beta^{\ts\prime}_{\aaa}(u) = \sum_{m=0}^{L-1} u^m$, which easily
follows from the trace formula. By Lyapunov forward regularity
\cite{BaPe}, we see that the sum of the exponents under the outward
iteration \eqref{eq:iter-out}, without the prefactor $\sqrt{\lambda}$,
is given by
\[
\begin{split}
     \sum_{m=1}^{\aaa}  \chi^{\ts\prime}_{m} \, & = \,  -\int_{0}^{1}
         \log \left| \det \bigl(B(k)\bigr) \right| \dd k 
    \, = \, -\sum_{m=1}^{\aaa} \int_{0}^{1} \log
      \left| \beta_{m}^{\ts\prime} (k ) \right| \dd k \\[2mm]
    & = \,  \chi_{1}^{\ts\prime}+ \chi_{2}^{\ts\prime} + \cdots +
        \chi_{\aaa -1}^{\ts\prime} - \mathfrak{m} 
        ( \beta^{\ts\prime}_{\aaa} ) \ts  ,
\end{split}
\]
from which it is clear that
$\chi^{\ts\prime}_{\aaa}=-\mathfrak{m} (\beta^{\ts\prime}_{\aaa})=0$
since $\beta^{\ts\prime}_{\aaa}$ is cyclotomic.  This completes the
argument for the transitive case.

When $G'$ fails to be transitive, we can still use the decomposition
$B = B_{\mathsf{b}} + B_{\mathsf{c}}$, where $B_{\mathsf{b}}$ now has
to be put into block diagonal form via some elementary matrix
operations that partition
$\cA^{}_{\aaa}= \{a^{}_1, \ldots , a^{}_{\aaa} \}$ into orbits of
$G'$. A particularly useful decomposition of $G'$ is
$G' \simeq G'_1 \times \cdots \times G'_s $, wherein each subgroup
$G'_{\ell}$ (which can be the trivial subgroup) acts transitively on
the $s$ orbits in $\cA_{\aaa}$. Furthermore, each nontrivial
$G'_{\ell}$ can be written as a finite product of cyclic groups by the
fundamental theorem of finite Abelian groups.  This also means that
the digit matrices afford the splitting 
\[
  D_m \, = \, \varPhi \bigl( g^{-1}_m \bigr) \, = \:
  \bigoplus_{\ell=1}^{s} \overline{\varPhi^{}_{\ell} 
  \bigl( g^{(\ell)}_{m} \bigr)} 
\]  
with $g^{}_m = \bigoplus_{\ell=1}^{s} \, g^{(\ell)}_{m} $, where 
$\varPhi^{}_{\ell}$ is the permutation representation on $G'_{\ell}$.

With this, we recover the eigenvalues of $B_{\mathsf{b}}$ from each 
block as
\[
   \beta^{(\ell)}_{j} (k) \, = \sum_{m \in S_{\mathsf{b}}}
   \overline{ \rho^{(\ell)}_{j} \bigl( g^{(\ell)}_{m} \bigr)} \, u^m ,
\]
where $\rho^{(\ell)}_{j}$ is an irreducible character of $G'_{\ell}$.
An immediate consequence is that $\sum_{m \in S_{\mathsf{b}}} u^m$ has
multiplicity $s$ as an eigenvalue of $B_{\mathsf{b}}$ (corresponding
to different eigenvectors) since all blocks naturally admit the
trivial representation. Note that non-transitivity in conjunction with
primitivity of the substitution implies that at least one coincidence
must be present, which implies $\card (S_{\mathsf{b}}) < L$.

Similar to the transitive case, any eigenvector of $B_{\mathsf{b}}$
with zero component sum remains an eigenvector of $B$, with the same
eigenvalue. All but one copy of the polynomial
$\sum_{m \in S_{\mathsf{b}}} u^m$ also remain eigenvalues, but this
time with the corresponding eigenvectors being linear combinations of
eigenvectors from different blocks. Finally, the uninherited
eigenvalue (the one with a $k$-dependent eigenvector) is the
cyclotomic polynomial $\sum_{m=0}^{L-1} u^m$, which can be computed
from the trace. It is easy to see that the same arguments
unambiguously apply as in the transitive case, since the eigenvalues
are polynomials in $u$ with coefficients of unit modulus.
\end{proof}

At this point, some examples are in order. 

\begin{example}[$\cA_3$, transitive, $G' \simeq C_3$]
  Consider the substitution $\varrho^{}_3$, with Fourier matrix
  $B^{}_{3}(k)$, given by
\[
\varrho^{}_3: \left\{ \begin{array}{ccc}
          0& \mapsto & 0022 ,\\
         1 & \mapsto & 1002 ,\\
         2 & \mapsto & 2012 ,
         \end{array} \right.
\quad \text{and} \quad
   B^{}_{3}(k) \, = \, \begin{pmatrix}
      1+u & u+u^2 & u \\ 0 & 1 & u^2 \\
      u^2+u^3 & u^3 & 1+u^3
      \end{pmatrix} ,
\]
where $u=\ee^{2\pi \ii k}$ as usual.  The eigenvalues and eigenvectors
of $B^{}_3 (k)$, which derive from the digit matrices that generate
the Abelian IDA, are given by
\begin{gather*}
     \beta_1(k) \, = \, 1+\omega^2 \ts u^2 
      \quad \text{with} \quad 
      v^{}_1 \, = \, (\omega^2,\omega,1)^{\text{T}} , \\ 
    \beta_2 (k) \, = \, 1+\omega \ts u^2 
     \quad \text{with} \quad 
      v^{}_2 \, = \, (\omega,\omega^2,1)^{\text{T}} ,
\end{gather*}
where $\omega=\ee^{\frac{2\pi \ii}{3}}$. The third eigenvalue is given
by $\beta_3(k)= 1+u+u^2+u^3$. The logarithmic Mahler measures
$\mathfrak{m} (\beta_i)$ are all zero since the respective roots all
lie on the unit circle. This implies that the (almost surely constant)
exponents are
$\chi^{}_{j} = \log (2) - \mathfrak{m} (\beta^{}_{j}) = \log (2)$ and
thus all strictly positive. \exend
\end{example}

\begin{example}[$\cA_4$, non-transitive, $G' \simeq C_2\times C_2$]
  The converse of Fact~\ref{fact:trans} is not true. There are Abelian
  subgroups of $\vS^{}_n$ of order $n$ that are not transitive. In
  $\vS_4$, there are seven subgroups isomorphic to Klein's $4$-group
  $C_2\times C_2$, only three of which are transitive. Here, we
  select a substitution where $G'$ has two disjoint orbits. Consider
  the substitution $\varrho^{}_{V}$, alongside with its corresponding
  Fourier matrix,
\[
    \varrho^{}_V:  \left\{
       \begin{array}{ccc}
          0& \mapsto & 0112 ,  \\
          1& \mapsto & 1012 , \\
          2& \mapsto & 3212 , \\
          3& \mapsto & 2312 , 
       \end{array} \right.
\quad \text{and} \quad 
     B(k) \, = \, \underbrace{\begin{pmatrix}
        1 & u  & 0 & 0 \\ u & 1  & 0 & 0\\
        0 & 0  & u & 1 \\ 0 & 0  & 1 & u 
     \end{pmatrix}}_{B_{\mathsf{b}}} +
     \underbrace{\begin{pmatrix}
        0 & 0 & 0 & 0 \\ u^2 & u^2 & u^2 & u^2 \\
       u^3 & u^3 & u^3 & u^3 \\ 0 & 0 & 0 & 0 
     \end{pmatrix}}_{B_{\mathsf{c}}}.
\]
The eigenvalues of $B$ that correspond to three $k$-independent 
eigenvectors of $B_{\mathsf{b}}$ are
\begin{align*}
    \beta_1(k) & =1-u  \ts ,  &   
          v^{}_1 & =(-1,1,0,0)^{T} , \\
    \beta_2(k) & =-1+u \ts , &  
          v^{}_2  & =(0,0,-1,1)^{T} , \\
    \beta_3(k) & =1+u  \ts , &   
          v^{}_3 & =(-1,-1,1,1)^{T} ,
\end{align*}
with the last eigenvalue being $\beta_4 (k) =1+u+u^2+u^3$. Here, one
sees that $v^{}_3$ is a linear combination of the eigenvectors from
the two separate blocks of $B_{\mathsf{b}}$ corresponding to the same
eigenvalue $\beta_3 (k)$. The positivity of the Lyapunov exponents
follows from the same arguments as in our previous examples.

As mentioned above in Theorem~\ref{thm:Boris}, it is a necessary criterion
\cite{BS} for a primitive substitution $\varrho$ of constant length
$L$ to have an absolutely continuous component in its dynamical
spectrum that the substitution matrix of $\varrho$ must have an
eigenvalue of modulus $\sqrt{L}$.  One can easily check that the
substitution matrix $M=B(0)$ of $\varrho^{}_{V}$ has eigenvalues
$\{4,2,0,0 \}$, so it satisfies the criterion. However, $\varrho^{}_V$
contains coincidences, and hence has pure point spectrum by Dekking's
criterion \cite{Dekking}. The absence of AC spectral components is
also rederived here via the positivity of the Lyapunov exponents.
\exend
\end{example} 

Another example that satisfies the $\sqrt{L}$-criterion but evades AC
spectral measures was analysed in \cite{CG}. Based on Bartlett's
approach, it was shown to have singular continuous spectrum. The
absence of AC spectral components was independently shown via Lyapunov
exponents in \cite{BG-block}. Let us comment on how one can
systematically construct examples that satisfy the
$\sqrt{\lambda}$-criterion, but do not have absolutely continuous
spectrum.

In general, one can begin with a (non-primitive) substitution of
length $L$ in $\aaa$ letters, whose columns are bijective and whose
generating subgroup $G'$ is a non-transitive subgroup of
$\vS^{}_{\aaa}$.  From the proof of Theorem~\ref{thm:extend},
$\beta(k)=1+u+\ldots+u^{L-1}$ is always an eigenvalue of
$B_{\mathsf{b}}$, and at least one copy of it survives to be an
eigenvalue of $B$, which means that $\beta(0)=L$ is an eigenvalue of
$M=B(0)$ of the new substitution formed by adding coincidences. One
can then choose to add appropriate columns, so that the resulting
substitution is primitive, and enough columns, so that it is of length
$L^2$.

\section{Inflation tilings in higher dimensions}\label{sec:higher}

At first sight, Theorem~\ref{thm:1D} and the methods employed to
derive it appear as a one-dimensional affair, which fortunately is not
the case. In fact, it is precisely the separation of the geometry of
the underlying space from the geometry of the inflation structure via
the Fourier matrix that can be transferred to higher dimensions, which
is one of the key advantages of the underlying geometric
self-similarity.

The general setting for inflation tilings in $\RR^d$ is as follows. We
assume we are given $\aaa$ prototiles $\cT_1, \ldots, \cT_{\aaa}$
(that is, tiles up to translations), together with an expansive linear
map $Q$ such that, for each $i$, the image $Q (\cT_i)$ is a union of
non-overlapping translates of prototiles. This is called a \emph{stone
  inflation} in \cite{TAO}, and we will once again use $\varrho$ for
the explicit inflation rule. The latter is called \emph{primitive} if
the corresponding incidence (or substitution) matrix is a primitive
matrix. An important subclass is provided by inflations that lead to
tilings of \emph{finite local complexity} (FLC) with respect to
translations. If one wants to go beyond this case, some extra
precaution will be required.

Essentially as in the one-dimensional case, any primitive
inflation defines a unique hull, called $\YY$, which can be obtained
as the orbit closure of a fixed point tiling of $\varrho$ (or of
$\varrho^m$ for a suitable $m\in\NN$) under the translation action of
$\RR^d$. One has the following classic result; compare
\cite{Boris,Robbie,TAO,FR,MR} and references therein.

\begin{fact}
  The hull\/ $\YY$ of a primitive stone inflation\/ $\varrho$ 
  is compact in the local rubber topology,
  consists of a single LI class and gives rise to a minimal
  topological dynamical system\/ $(\YY, \RR^d)$. The latter is
  strictly ergodic, where the unique invariant probability
  measure is the patch frequency measure.  \qed
\end{fact}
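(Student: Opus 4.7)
The plan is to establish each assertion in turn, following the classical scheme (compare \cite{TAO,FR,MR}) with the primitivity of the substitution matrix $M_{\varrho}$ as the central input, and with the expansive map $Q$ providing the hierarchical self-similarity that replaces the simple scalar scaling used in the one-dimensional case.

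First, for compactness of $\YY$ in the local rubber topology, I would exploit that only finitely many prototiles occur up to translation. Any legal patch of diameter at most $R$ can therefore take only finitely many combinatorial forms, and within each form the tile positions are constrained to a compact set once one reference tile is fixed. Hence the space of legal $R$-patches is totally bounded in the rubber metric. A standard diagonal extraction along a nested sequence of balls $B(0,R_n)$ with $R_n\to\infty$ then produces, from any sequence in $\YY$, a subsequence that converges locally, which gives compactness.

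Second, minimality together with the single LI class property rest on repetitivity, which follows from primitivity. Since $M_{\varrho}$ is primitive, some power $M_{\varrho}^{N}$ is strictly positive, so every level{\ts}-$N$ supertile $\varrho^{N}(\cT_{i})$ contains a translate of each prototile. Iterating, any legal patch $P$ appears inside some level{\ts}-$k$ supertile, and by recognisability of primitive stone inflations, every $\cT\in\YY$ admits a hierarchical decomposition into level{\ts}-$k$ supertiles. Consequently $P$ appears in every $\cT\in\YY$ with bounded gap, which is (linear) repetitivity; this immediately forces minimality and that all tilings in $\YY$ share the same set of legal patches, so they form a single LI class.

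Third, for strict ergodicity, I would show that, for every legal patch $P$, the absolute frequency
\[
   \lim_{n\to\infty} \myfrac{1}{\vol (R_{n})}\,
   \card \{ t\in\RR^{d} : t+P\subset \cT\cap R_{n} \}
\]
exists, is independent of $\cT\in\YY$, and the convergence is uniform along any van Hove sequence $\cR$. This is the point where one invokes a Perron--Frobenius argument on the patch-level substitution matrix, i.e., the substitution induced on suitably collared $P$-patches, which inherits primitivity from $M_{\varrho}$. Uniform convergence of the counting averages across all legal patches then pins down a unique translation-invariant probability measure on $\YY$, which by construction is the patch-frequency measure.

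The main obstacle, and the reason the statement is not purely formal, is the non-FLC regime: the rubber topology is essential because combinatorially identical patches may appear in slightly different relative positions, so one must verify that the primitivity-based counting arguments remain valid modulo these small perturbations. The resolution is that the expansive $Q$ rigidifies the geometry at high levels, so any $\varepsilon$-ambiguity in tile placement is absorbed after finitely many applications of $\varrho$; repetitivity and uniform patch frequencies then survive after passing to a suitable power of $\varrho$, and the argument reduces to the familiar FLC case.
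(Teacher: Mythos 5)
The paper offers no proof of this Fact; it is quoted as a classic result with references to \cite{Boris,Robbie,TAO,FR,MR}, and your sketch reconstructs exactly the standard route taken there (compactness by total boundedness of the patch space plus diagonal extraction, repetitivity from primitivity giving minimality and a single LI class, uniform patch frequencies via Perron--Frobenius giving unique ergodicity). So in substance you are doing the same thing the cited literature does, and the overall architecture is sound. Two caveats are worth recording. First, you invoke ``recognisability'' to get that every $\cT\in\YY$ decomposes into level{\ts}-$k$ supertiles; recognisability is about the \emph{uniqueness} of that decomposition, holds only for aperiodic hulls (Solomyak's result \cite{Boris-2}), and is not what you need here --- the \emph{existence} of a supertile decomposition for every element of the orbit closure follows from a compactness/limit argument and is available also in the periodic case, which the Fact is meant to cover (compare Remark~\ref{rem:periodic}). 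Second, your closing claim that the non-FLC difficulties ``reduce to the familiar FLC case'' after passing to a power of $\varrho$ is too optimistic as stated: with finitely many prototiles only up to translation, one must genuinely count patches up to $\varepsilon$-deformations and prove uniform existence of the frequencies of these $\varepsilon$-classes along van Hove sequences, which is the actual content of \cite{FR,LS,MR} and does not follow from a rigidification heuristic alone. Neither point breaks the argument, but both should be repaired or replaced by the appropriate citations before the sketch could stand as a proof.
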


\begin{remark}
   The term `patch frequency' is the extension of a standard notion
   (word frequency) from symbolic dynamics to FLC tilings. Patch
   frequencies are used to define a translation-invariant measure on
   the tiling hull (equipped with the local topology) via cylinder
   sets; compare \cite{Q,TAO}.
   
   Here, we also need the analogous concept for hulls of non-FLC
   tilings, now equipped with the \emph{local rubber topology}
   \cite{BL,TAO}. The latter is a Fell topology and reduces to the
   local topology in the FLC case. The construction of a measure
   via cylinder sets now requires the consideration of patches
   (or clusters) up to $\varepsilon$-deformations for small
   $\varepsilon > 0$. For primitive inflation rules with finitely many
   prototiles up to translations, the resulting properties are very
   similar to the FLC case; see \cite{FR,LS} for details.
\exend
\end{remark}

Let us now assume that each prototile is equipped with a \emph{control
  point} in such a way that any tiling in the hull $\YY$ is MLD with
the corresponding set of control points. In this context, it is often
convenient to distinguish (or colour) the points according to the
prototile type. Unlike the situation in one dimension, there may not
be an obvious or canonical choice for the control points. This does
not matter much because the tiling hull and the corresponding set of
point sets define topologically conjugate dynamical systems, wherefore
we identify the two points of view. It will be clear from the context
whether we speak about tilings or about their MLD control point sets.

\begin{remark}
  Two (possibly coloured) point sets that are MLD need not (and
  generally do not) have the same pair correlations. Consequently, the
  diffraction measures will generally also differ, which reflects the
  fact that the diffraction measure neither is an invariant under
  topological conjugacy nor under metric isomorphism. Nevertheless,
  the \emph{type} of the dynamical spectrum is invariant, and our approach
  via Lyapunov exponents thus aims at statements about the presence or
  absence of absolutely continuous spectral components.  \exend
\end{remark}

Let us now develop the higher-dimensional analogue of the displacement
matrix and how it can be used to derive Lyapunov exponents together
with suitable estimates.

\subsection{Displacement and Fourier matrices}

Let $T_{ij}$ be the set of control point positions of tiles of type
$i$ in the supertile of type $j$, relative to the control point
position of the latter, where the location of the control points in
the supertiles are determined by the action of the linear map $Q$. As
before, $M = \card (T)$ is the incidence matrix, where
$T= (T_{ij})^{}_{1\leqslant i,j \leqslant \aaa}$ is the set-valued
displacement matrix. Likewise,
$Q T = ( Q T_{ij} )^{}_{1\leqslant i,j \leqslant \aaa}$ with
$Q T_{ij} \defeq \{ Q (t) \mid t \in T_{ij} \}$ is the displacement matrix
for the relative positions of supertiles in level{\ts}-$2$ supertiles.
Now, let $T^{(n)}$ denote the displacement matrix for the relative
positions of tiles in the supertiles of level $n$, which is the
displacement matrix for the inflation rule defined by
$\varrho^n$. Clearly, $T^{(1)} = T$ and, adding the relative
displacements by one additional inflation step, one recursively gets
\begin{equation}\label{eq:rel-displace}
     T^{(n+1)}_{ij} \, = \, \bigcup_{\ell=1}^{\aaa} \ts 
     \bigl( T^{}_{i \ell} + QT^{(n)}_{\ell j} \bigr) ,
\end{equation}
where $+$ denotes the Minkowski sum of two point sets, as defined
by 
\[
    U \nts + V \, = \, \{ u+v \mid u \in U, v \in V \} \ts .
\]
It is easy to check that $\card (T^{(n)}) = M^n$.

As before, with $\delta^{}_{T} \defeq \bigl( \delta^{}_{T_{ij}}
\bigr)_{1\leqslant i,j \leqslant \aaa}$,
the \emph{Fourier matrix} of the inflation $\varrho$ is
\begin{equation}\label{eq:F-mat-gen}
     B (k) \, \defeq  \overline{\widehat{\delta^{}_{T}}} (k)
     \, = \, \widehat{\delta^{}_{T}} (-k) \ts 
\end{equation}
with $k \in \RR^d$. For each $k$, one has $B (k) \in \Mat (\aaa, \CC)$,
and each matrix element of $B$, as a function of $k$, is a
multivariate trigonometric polynomial.

\begin{lemma}\label{lem:gen-F-matrix}
  Let\/ $\varrho$ define a primitive stone inflation, with finitely
  many translational prototiles and linear expansion\/ $Q$. Further,
  let\/ $B(k)$ be the Fourier matrix from
  Eq.~\eqref{eq:F-mat-gen}. Then, for\/ $n\in\NN$, the Fourier matrix
  of\/ $\varrho^n$ is given by
\[
    B^{(n)} (k) \, = \, B(k) \ts  B (Q^T k) 
   \cdots B \bigl( (Q^T)^{n-1} k \bigr) 
\]
  and satisfies\/ $B^{(1)} = B$ together with\/ $B^{(n+1)} (k) =
  B(k) \, B^{(n)} (Q^T k)$ for\/ $n\in \NN$.
\end{lemma}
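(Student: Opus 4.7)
The plan is to establish the product formula by induction on $n$, with the key intermediate step being the multiplicative recursion $B^{(n+1)}(k) = B(k) \ts B^{(n)} (Q^T k)$. The base case $B^{(1)} = B$ is immediate from the definition, so the entire argument reduces to deriving this one-step recursion from the set-valued identity \eqref{eq:rel-displace}.

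To convert \eqref{eq:rel-displace} into an identity between trigonometric polynomials, I would translate the relative positions into Dirac combs. Since $\varrho$ is a stone inflation, the control points of distinct tiles inside any level-$(n{+}1)$ supertile are pairwise distinct. Hence the union in \eqref{eq:rel-displace} is disjoint and, moreover, the Minkowski sums $T^{}_{i\ell} + Q T^{(n)}_{\ell j}$ are themselves sums of distinct points, so that
\[
   \delta^{}_{T^{(n+1)}_{ij}} \, = \sum_{\ell=1}^{\aaa}
   \delta^{}_{T^{}_{i\ell}} * \delta^{}_{Q T^{(n)}_{\ell j}} .
\]
Taking Fourier transforms, using the convolution theorem on the right-hand side and the elementary push-forward identity
\[
  \widehat{\delta^{}_{Q U}} (k) \, =
    \sum_{u \in U} \ee^{-2\pi\ii k \ts (Q u)} \, =
    \sum_{u \in U} \ee^{-2\pi\ii (Q^T k)\ts u} \, =
    \widehat{\delta^{}_{U}} (Q^T k) ,
\]
and finally applying the sign flip $k\mapsto -k$ that converts $\widehat{\delta^{}_{T}}$ into $B$, one obtains
\[
   B^{(n+1)}_{ij} (k) \, = \sum_{\ell=1}^{\aaa}
   B^{}_{i\ell} (k) \ts B^{(n)}_{\ell j} (Q^T k) ,
\]
which is the claimed recursion in matrix form.

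Iterating this recursion $n$ times, and absorbing the successive transposes of $Q$ into $(Q^T)^{m}$ via the chain rule $(Q^T)^{m+1} k = Q^T ((Q^T)^m k)$, yields the telescoping product $B(k) \ts B(Q^T k) \cdots B((Q^T)^{n-1} k)$, completing the induction.

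The only nontrivial point in the whole argument is the disjointness claim that legitimises writing the Minkowski sum as a clean convolution of Dirac combs; this is where the stone inflation hypothesis enters (ensuring tiles do not overlap at any level), and hence where an FLC-type assumption would typically be invoked. Once this is granted, the rest is purely formal manipulation of Fourier transforms and linear maps.
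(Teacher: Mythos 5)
Your proposal is correct and follows essentially the same route as the paper: both arguments convert the Minkowski-sum recursion \eqref{eq:rel-displace} into a Dirac-comb convolution identity, apply the convolution theorem together with the observation $\ee^{2\pi\ii k(u+Qv)} = \ee^{2\pi\ii ku}\ts\ee^{2\pi\ii (Q^T k)v}$, and then iterate. Your explicit remark that the non-overlap property of the stone inflation is what makes $\delta^{}_{U} * \delta^{}_{V} = \delta^{}_{U+V}$ legitimate (no multiplicities) is precisely the point the paper compresses into the phrase ``the structure of the Minkowski sum''.
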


\begin{proof}
 This is a consequence of Eq.~\eqref{eq:rel-displace}
 in conjunction with the observation that
\[
    \ee^{2 \pi \ii k (u + Q v)} \, = \, \ee^{2 \pi \ii k u}\,
    \ee^{2 \pi \ii k \ts Qv} \, = \, \ee^{2 \pi \ii k u} \,
    \ee^{2 \pi \ii (Q^T \nts k) \ts v} .
\]
Now, with $B^{(n)} (k) = \widehat{\delta^{}_{T^{(n)}}} (-k)$, the
structure of the Minkowski sum in \eqref{eq:rel-displace} together
with $\delta_{u+v} = \delta_{u} * \delta_{v}$ and the convolution
theorem for Fourier transforms give the claim for $n=2$ by a simple
calculation. The general formula with its recursive structure is then
obvious.
\end{proof}

\subsection{Renormalisation relations for pair correlations}

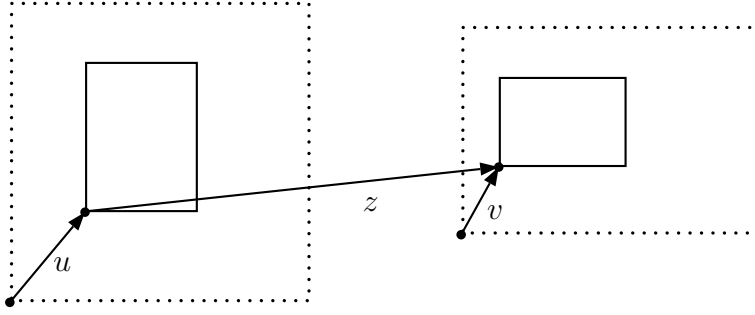
\begin{figure}[t]
\begin{pspicture}(10,4.5)
\psframe[linewidth=1.3pt,linestyle=dotted](0,0)(4,4)
\psdot(0,0)
\psframe[linewidth=1.3pt,linestyle=dotted](6,0.9)(10,3.68)
\psdot(6,0.9)
\psframe[linestyle=solid](1,1.2)(2.5,3.2)
\psdot(1,1.2)
\psframe[linestyle=solid](6.5,1.8)(8.2,3)
\psdot(6.5,1.8)
\psset{arrowsize=5pt,arrowinset=0}
\psline{->}(0,0)(1,1.2)      \rput(0.7,0.5){\large $u$}
\psline{->}(6,0.9)(6.5,1.8)  \rput(6.45,1.2){\large $v$}
\psline{->}(1,1.21)(6.5,1.8)  \rput(4.8,1.3){\large $z$}
\end{pspicture}
\caption{If the two tiles (solid lines) at distance $z$ have offsets
  $u$ and $v$ within their covering supertiles (dotted lines), the
  latter have distance $z+u-v$.\label{fig:two} Here, the distances are
  always defined via the control points of the tiles.}
\end{figure}

Let us begin with the case that the primitive inflation defines an FLC
tiling hull.  In complete analogy to one dimension, $\nu^{}_{ij} (z)$
is defined as the relative frequency of finding a point of type $i$
and one of type $j$ separated by $z\in\RR^d$, which is to be read as
the vector from positions $i$ to $j$. In other words, one has
\begin{equation}\label{eq:def-pc-gen}
    \nu^{}_{ij} (z) \, = \,   \frac{\dens \bigl( \vL_i
     \cap (\vL_j - z ) \bigr)}{\dens (\vL) } \ts ,
\end{equation}
which is once again independent of the choice of $\vL$ from the hull,
with $\supp (\nu^{}_{ij}) = \vL^{}_j - \vL^{}_i$.

The exact renormalisation relations, first announced in
\cite{Neil-MFO}, now read
\begin{equation}\label{eq:nu-reno}
   \nu^{}_{ij} (z) \, = \, \myfrac{1}{\lvert\det(Q)\rvert} 
   \sum_{m,n = 1}^{\aaa} \, \sum_{\substack{u\in T_{im} \\ v \in T_{jn}}}
   \nu^{}_{mn} \bigl( Q^{-1} (z + u - v) \bigr) .
\end{equation}
The derivation is once again based on recognisability in aperiodic
inflation tilings, which holds here as well \cite{Boris-2}, and works
exactly as in the one-dimensional case covered by
Lemma~\ref{lem:frequ-eqs}; see Figure~\ref{fig:two} for an
illustration with block tiles whose lower left corners are chosen as
their reference points, which explains the meaning of $u$, $v$ and $z$
in \eqref{eq:nu-reno}. The extension to also cover the periodic cases
is a consequence of Remark~\ref{rem:periodic}.

With $\vU^{}_{\nts ij} = \sum_{z} \nu^{}_{ij} (z) \, \delta^{}_{z}$,
which can alternatively be defined as in Eq.~\eqref{eq:corr-as-conv},
this leads to the corresponding relations for the pair correlation
measures, namely
\begin{equation}\label{eq:ups-reno}
    \vU^{}_{\nts ij} \, = \, \myfrac{1}{\lvert\det(Q)\rvert} 
    \sum_{m,n=1}^{\aaa}
    \widetilde{\delta^{}_{T_{im}}} * \ts
    \delta^{}_{T_{jn}} * \bigl( Q.\vU^{}_{mn}\bigr) .
\end{equation}
Its derivation from \eqref{eq:nu-reno} is based on the same
calculation as in the one-dimensional case.  At this point, using the
results from \cite{BL,FR} on Delone dynamical systems in the local
rubber topology, it is not difficult to see that the pair correlation
measures $\vU_{ij}$ are well-defined, and that Eq.~\eqref{eq:ups-reno}
is the correct relation among them, also in the non-FLC case, provided
we start from a primitive inflation rule with \emph{finitely} many
translational prototiles. The crucial step here is that $T$ then still
is a finite matrix of displacement sets.

\begin{prop}
    Let\/ $\YY$ be the tiling hull of a primitive stone inflation with
    finitely many translational prototiles. Let\/ $Q$ denote the
    corresponding expansive linear map and\/ $T$ the displacement
    matrix. Then, the pair correlation measures\/ $\vU_{ij}$ are
    well defined and satisfy the renormalisation 
    relations~\eqref{eq:ups-reno}.

    Moreover, whenever\/ $\YY$ is FLC, each\/ $\vU_{ij}$ is a pure 
    point measure, and the pair correlation functions\/ $\nu^{}_{ij}$ 
    defined in \eqref{eq:def-pc-gen} satisfy the 
    relations~\eqref{eq:nu-reno}.  \qed
\end{prop}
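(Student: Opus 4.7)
My approach is to treat the FLC and non-FLC cases in parallel, using the representation
\[
    \vU^{}_{\nts ij} \, = \, \frac{\widetilde{\delta^{}_{\! \vL_i}}
    \circledast \delta^{}_{\! \vL_j}}{\dens (\vL)}
\]
from Eq.~\eqref{eq:corr-as-conv}. The first task is to establish existence, translation boundedness, and hull-independence of this Eberlein convolution. In the FLC case, this follows by the same strict-ergodicity argument used in Section~\ref{sec:general}. In the non-FLC case, I would invoke the results of \cite{BL,FR,LS}: since $\varrho$ has only finitely many translational prototiles, the hull $\YY$ equipped with the local rubber topology is a compact, minimal and strictly ergodic $\RR^d$-dynamical system, and each $\vL^{}_{j}$ is uniformly discrete with a density that is constant on $\YY$. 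Together with Lemma~\ref{lem:transformable}, this ensures that $\vU^{}_{\nts ij}$ is a well-defined, translation-bounded and transformable measure, independent of the choice of $\vL\in\YY$.

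Next, I would derive \eqref{eq:ups-reno} from the geometric content of local recognisability of the unique level-$1$ supertile containing each tile, which holds for primitive stone inflations with finitely many translational prototiles by \cite{Boris-2}. Pick $\vL\in\YY$ and let $\widetilde{\vL}\in\YY$ denote the corresponding deflated tiling, with the control points of its type-$m$ tiles written as $\widetilde{\vL}^{}_{m}$. Then $Q . \widetilde{\vL}^{}_{m}$ is precisely the set of control points of the type-$m$ level-$1$ supertiles in $\vL$, and recognisability yields the disjoint decomposition
\[
    \vL^{}_{i} \, = \, \bigcup_{m=1}^{\aaa} \bigl( T^{}_{im} +
    Q . \widetilde{\vL}^{}_{m} \bigr) ,
\]
where for periodic hulls a consistent supertile marking is chosen as in Remark~\ref{rem:periodic}. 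On the level of Dirac combs, this reads $\delta^{}_{\! \vL_i} = \sum_m \delta^{}_{T^{}_{im}} * \bigl( Q . \delta^{}_{\! \widetilde{\vL}^{}_{m}} \bigr)$. Substituting this into the Eberlein convolution, invoking bilinearity, the identity $Q . (\mu * \nu) = (Q . \mu) * (Q . \nu)$ for linear $Q$ recalled in Section~\ref{sec:prelim}, and the volume scaling $\dens (Q . \mu) = \lvert \det Q \rvert^{-1} \dens (\mu)$, and finally identifying the Eberlein convolutions $\widetilde{\delta^{}_{\! \widetilde{\vL}^{}_{m}}} \circledast \delta^{}_{\! \widetilde{\vL}^{}_{n}}$ with $\dens (\vL) \, \vU^{}_{\nts mn}$ via the hull-independence from the previous paragraph, produces \eqref{eq:ups-reno}.

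In the FLC case, the support $\vL^{}_{j} - \vL^{}_{i}$ is locally finite, so $\vU^{}_{\nts ij}$ is automatically pure point, and the relative frequencies $\nu^{}_{ij} (z)$ from Eq.~\eqref{eq:def-pc-gen} exist uniformly and constantly on $\YY$ by strict ergodicity. They are precisely the coefficients in $\vU^{}_{\nts ij} = \sum_z \nu^{}_{ij} (z) \, \delta^{}_{z}$. Expanding both sides of \eqref{eq:ups-reno} as sums of point masses using $\delta^{}_{u} * \delta^{}_{v} = \delta^{}_{u+v}$ and $Q . \delta^{}_{w} = \delta^{}_{Qw}$, and matching coefficients at each $z \in \vL^{}_{j} - \vL^{}_{i}$, then yields \eqref{eq:nu-reno}; the pointwise argument is literally the analogue of the one-dimensional derivation in Lemma~\ref{lem:frequ-eqs}, as illustrated by Figure~\ref{fig:two}.

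The main obstacle is the careful justification of the convolution manipulation in the non-FLC case, where coefficient-wise arguments at individual points are unavailable and one has to stay entirely at the level of measures in the Fell-topology framework. It is essential here that the prototile set is finite up to translations, because this ensures that the displacement matrix $T$ itself is finite so that \eqref{eq:ups-reno} remains a finite sum of well-defined translation-bounded measures, and it is the strict ergodicity of $\YY$ in the local rubber topology established in \cite{FR,LS} that lets one identify the Eberlein convolutions on the deflated sets with the $\vU^{}_{\nts mn}$ of the original hull.
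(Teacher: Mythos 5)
Your proposal is correct, and for the non-FLC half of the statement it is essentially a fleshed-out version of what the paper only sketches (the proposition carries no separate proof; the surrounding text is the argument). The paper's route is frequency-first: it derives \eqref{eq:nu-reno} by the pointwise recognisability/counting argument of Lemma~\ref{lem:frequ-eqs} and Figure~\ref{fig:two}, passes to \eqref{eq:ups-reno} by ``the same calculation as in one dimension'', and then appeals to \cite{BL,FR} to extend the measure-level statement beyond FLC. You reverse the order: you establish \eqref{eq:ups-reno} directly at the level of measures, via the Dirac-comb decomposition $\delta^{}_{\vL_i} = \sum_{m} \delta^{}_{T_{im}} * \bigl(Q.\delta^{}_{\widetilde{\vL}_m}\bigr)$ over supertile types together with the behaviour of the Eberlein convolution under convolution by finite measures and under push-forward by $Q$, and only afterwards recover \eqref{eq:nu-reno} in the FLC case by matching point masses. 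This buys a uniform treatment of the FLC and non-FLC cases and makes transparent where the prefactor $\lvert\det Q\rvert^{-1}$ in \eqref{eq:ups-reno} comes from; the cost is that the two steps you yourself flag carry the real analytic weight and would need to be written out, namely (i) the interchange of convolution by the finite measures $\delta^{}_{T_{im}}$ with the volume-averaged limit, which is a van Hove boundary estimate, and (ii) the fact that the deflated point sets $\widetilde{\vL}_m$ again belong to $\YY$ and hence have the same pair correlations, which rests on unique composition \cite{Boris-2} in the aperiodic case and on the consistent marking of Remark~\ref{rem:periodic} in the periodic one. Both are standard, and the paper treats them as such, so there is no gap --- only a different (and arguably cleaner) organisation of the same ingredients.
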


\begin{remark}
  While the formulation in terms of a stone inflation is the closest
  analogue to the one-dimensional case, it is by no means necessary.
  Clearly, one can also consider a more general inflation scheme with
  a rule that guarantees the gapless and overlap{\ts}-free cover of
  space in the limit. Various scenarios, and how they are related, are
  described in \cite[Ch.~6]{TAO}, including many classic examples such
  as the Ammann--Beenker and the Penrose tilings, but also more
  complicated ones that can be reformulated as a stone inflation via
  fractiles (tiles with fractal boundaries); see \cite{TAO,Dirk} for
  various examples. We leave further details to the interested reader.
  \exend
\end{remark}

\subsection{Analysis after Fourier transform}

Let $Q^* = (Q^T)^{-1}$ be the \emph{dual} matrix and observe that
$\widehat{Q.\ts \mu} = \lvert\det(Q)\rvert^{-1} Q^* \! . \ts
\widehat{\mu}\ts $, as shown in \cite[Lemma~2.5]{BGM}.
Then, Fourier transform turns relation \eqref{eq:ups-reno} into
\begin{equation}\label{eq:gen-FT-eq}
   \widehat{\vU}^{}_{\nts ij} \, = \, \det(Q)^{-2}
   \sum_{m,n=1}^{\aaa} B^{}_{i m} (.) \, \overline{B^{}_{j n} (.)}
   \, \bigl( Q^* \! . \widehat{\vU}^{}_{m n} \bigr) .
\end{equation}
By Lemma~\ref{lem:type-recursion}, which remains valid here without
any change, Eq.~\eqref{eq:gen-FT-eq} gives rise to three separate
relations for the spectral types.

The analysis of the pure point part can be done in complete analogy
to Section~\ref{sec:pp-part} and leads to
\[
     \bs{I} (k) \, = \, \det (Q)^{-2}
     \bs{A} (k) \ts \bs{I} (Q^T k)
\]
together with the eigenvector relation
$\bs{A} (0) \ts \bs{I} (0) = \det (Q)^2 \ts \bs{I} (0)$. Since
$\lvert \det (Q) \rvert$ is the PF eigenvalue of $M \defeq B(0)$,
which is the inflation or incidence matrix of our system, the entries
$I_{ij} (0)$ of $\bs{I} (0)$ once again satisfy \eqref{eq:pp-dens}, so
$I_{ij} (0) = \dens (\vL)^{-2} \dens (\vL_i) \dens (\vL_j)$.

Likewise, when we again represent the absolutely continuous part of
$\widehat{\vU}$ by the vector $\bs{h}$ of Radon--Nikodym densities,
Lemma~\ref{lem:ac-rec} still holds, now with the relation
\[
    \bs{h} (k) \, = \, \myfrac{1}{\lvert\det (Q)\rvert} \bs{A} (k) 
    \ts \bs{h} (Q^T k) \ts .
\]
Clearly, also the argument with the dimensional reduction can be
applied here. Consequently, we may consider the cocycle $B^{(n)} (k)$
from Lemma~\ref{lem:gen-F-matrix} in conjunction with the Lyapunov
exponents
\begin{equation}\label{eq:L-general}
   \chi^{B} (k) \, \defeq \, \limsup_{n\to\infty}
   \myfrac{1}{n} \log \big \| B^{(n)} (k) \big \|
   \quad \text{and} \quad
   \chi^{}_{\min} (k) \, = \, \log
   \sqrt{\lvert \det (Q) \rvert}\, - \chi^{B} (k) \ts .
\end{equation}
A rather straight-forward generalisation of our
previous proof in one dimension results in the following
higher-dimensional counterpart.

\begin{theorem}\label{thm:higher-D}
  Let\/ $\varrho$ be a primitive inflation rule in\/ $\RR^d$, with
  finitely many translational prototiles and expansive linear map\/
  $Q$.  Let\/ $B(k)$ be the corresponding Fourier matrix, with\/
  $\det (B(k)) \ne 0$ for at least one\/ $k\in\RR^d$.  If there is
  an\/ $\varepsilon > 0$ such that
\[  \chi^{B} (k) \, \leqslant \,
    \log \sqrt{\lvert \det(Q)\rvert} \, - \varepsilon
\]
  holds for a.e.\ $k\in\RR^d$, where\/ $\chi^{B} (k)$ is the maximal
  Lyapunov exponent from \eqref{eq:L-general}, the diffraction
  measure of the system cannot have an absolutely continuous part.
  \qed
\end{theorem}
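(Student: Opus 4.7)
The plan is to mirror the proof of Theorem~\ref{thm:1D} almost verbatim, with the scalar multiplier $\lambda$ replaced everywhere by the Jacobian modulus $\lvert\det(Q)\rvert$ and the dilation $k \mapsto \lambda k$ on the Fourier side replaced by $k \mapsto Q^T k$. The structural ingredients are in place from the discussion preceding the statement: the Radon--Nikodym density vector $\bs{h}$ of $\widehat{\vU}_{\mathsf{ac}}$ satisfies $\bs{h}(k) = \lvert\det(Q)\rvert^{-1} \bs{A}(k)\ts \bs{h}(Q^T k)$ for a.e.~$k \in \RR^d$. Performing the dimensional reduction as after Lemma~\ref{lem:ac-rec}, I switch to the Hermitian positive semi-definite matrix $\cH(k) = \bigl(h_{ij}(k)\bigr)$, which obeys
\[
    \cH(k) \, = \, \lvert\det(Q)\rvert^{-1}\, B(k)\ts \cH(Q^T k)\ts B^{\dagger}(k) \ts .
\]
Via Fact~\ref{fact:decompose}, I decompose $\cH(k)$ into at most $\aaa$ rank-one summands $v^{(i)}(k)\ts (v^{(i)}(k))^{\dagger}$, and each vector $v = v^{(i)}$ then satisfies the reduced iteration $v(k) = \lvert\det(Q)\rvert^{-1/2}\, B(k)\ts v(Q^T k)$.

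Next, I would exploit the hypothesis $\det(B(k)) \ne 0$ for some $k$. Since $k \mapsto \det(B(k))$ is a multivariate trigonometric polynomial, hence real-analytic on $\RR^d$ and not identically zero, its zero set has Lebesgue measure zero, so $B(k)$ is invertible a.e. Iterating and using Lemma~\ref{lem:gen-F-matrix} yields the outward iteration
\[
    v\bigl((Q^T)^n k\bigr) \, = \, \lvert\det(Q)\rvert^{n/2}\, \bigl(B^{(n)}(k)\bigr)^{-1} v(k)
\]
for a.e.~$k$. Combining the estimate $\lVert v(k)\rVert \leqslant \lVert B^{(n)}(k)\rVert \cdot \lVert(B^{(n)}(k))^{-1} v(k)\rVert$ with the hypothesis $\chi^{B}(k) \leqslant \log\sqrt{\lvert\det(Q)\rvert} - \varepsilon$ produces, for some $0 < \delta < \varepsilon$ and all sufficiently large $n$, the lower bound $\lVert v((Q^T)^n k)\rVert \geqslant C(k)\,\ee^{n\delta}$ at a.e.~$k$ with $v(k) \ne 0$.

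The closing step is the translation-boundedness contradiction in direct analogy with the one-dimensional argument via \cite[Lemma~9.3]{BFGR}. Setting $h_s \defeq \sum_{\ell=1}^{\aaa} h_{\ell\ell}$, Fact~\ref{fact:decompose} yields $h_s(k) = \sum_i \lVert v^{(i)}(k)\rVert^2 \geqslant 0$, and $h_s$ is the Radon--Nikodym density of a translation-bounded positive measure (the sum of the absolutely continuous parts of the diagonal diffraction measures). Fix a compact set $K \subset \RR^d$ with $\vol(K) > 0$; then $(Q^T)^n K$ has volume $\lvert\det(Q)\rvert^n \vol(K)$, so translation boundedness forces $\int_{(Q^T)^n K} h_s(x) \dd x = \cO\bigl(\lvert\det(Q)\rvert^n\bigr)$ as $n \to \infty$. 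On the other hand, the substitution $x = (Q^T)^n k$ combined with the lower bound on $\lVert v^{(i)}\rVert$ gives, provided at least one $v^{(i)}$ is not a.e.\ zero on $K$,
\[
    \int_{(Q^T)^n K} h_s(x) \dd x \, = \, \lvert\det(Q)\rvert^n \int_K h_s\bigl((Q^T)^n k\bigr) \dd k \, \geqslant \, C'\,\lvert\det(Q)\rvert^n\, \ee^{2n\delta}
\]
with some $C' > 0$ for large $n$. These bounds are incompatible for $\delta > 0$ as $n \to \infty$, forcing $v^{(i)} = 0$ a.e.\ for every $i$. Hence $\cH \equiv 0$ almost everywhere, giving $\widehat{\vU}_{\mathsf{ac}} = 0$ and therefore $\widehat{\gamma}^{}_{\mathsf{ac}} = 0$, as claimed.

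The main obstacle is essentially bookkeeping around the linear map. One must ensure that the Jacobian factor $\lvert\det(Q)\rvert^n$ appearing from the change of variables on the Fourier side matches the volume scaling on the direct side, so that only the exponential discrepancy $\ee^{2n\delta}$ drives the contradiction; this is precisely why $\log\sqrt{\lvert\det(Q)\rvert}$ is the sharp threshold in the hypothesis. One also needs that $\{(Q^T)^n K\}$ is admissible for the translation-boundedness estimate, but since $Q$ is expansive, so is $Q^T$, and $(Q^T)^n K$ grows uniformly in every direction, which suffices here.
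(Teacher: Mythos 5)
Your proposal is correct and is essentially the paper's own argument: the paper proves Theorem~\ref{thm:higher-D} precisely by declaring it "a rather straight-forward generalisation" of the proof of Theorem~\ref{thm:1D}, replacing $\lambda$ by $\lvert\det(Q)\rvert$ and the dilation by $k\mapsto Q^{T}k$, exactly as you carry out. Your bookkeeping of the Jacobian factor against the volume growth of $(Q^{T})^{n}K$, and the resulting $\ee^{2n\delta}$ discrepancy forcing $v^{(i)}=0$ a.e., matches the intended translation-boundedness contradiction via \cite[Lemma~9.3]{BFGR}.
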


In analogy to the one-dimensional case, $\chi^{}_{\min} (k) >0$ for a
subset of full measure within an open neighbourhood of $0$ already
suffices to rule out an absolutely continuous diffraction
component. In fact, we can also repeat the proof of
Theorem~\ref{thm:geq}, with minor modifications, so that we obtain the
following result.

\begin{coro}\label{coro:higher-D}
  Let\/ $\varrho$ be a primitive inflation rule with the conditions as
  in Theorem~\textnormal{\ref{thm:higher-D}}. Then, one has\/
  $\chi^{}_{\min} (k) \geqslant 0$ for a.e.\ $k\in\RR^d$.  If the
  system displays a non-trivial diffraction component of absolutely
  continuous type, one must have\/ $\chi^{}_{\min} (k) = 0$ for a
  subset of\/ $\RR^d$ of positive measure, which has full
  measure when\/ $\chi^{}_{\min} (k)$ is constant for a.e.\
  $k\in\RR^d$. \qed
\end{coro}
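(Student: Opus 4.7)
My plan is to establish the lower bound $\chi^{}_{\min}(k) \geqslant 0$ by adapting the proof of Theorem~\ref{thm:geq} to $\RR^d$, and then to deduce the second statement by contraposition of Theorem~\ref{thm:higher-D} (or its local sharpening noted immediately after it), combined with this bound. For the first part, I would first extend the subadditive averaging of Lemma~\ref{lem:bound} to $\RR^d$. From the cocycle identity in Lemma~\ref{lem:gen-F-matrix}, namely $B^{(m+n)}(k) = B^{(m)}(k)\, B^{(n)}\!\bigl((Q^T)^m k\bigr)$, together with submultiplicativity of the Frobenius norm, the function $L_n(k) \defeq \log \| B^{(n)}(k) \|^{}_{\mathrm{F}}$ satisfies
\[
L_{m+n}(k) \, \leqslant \, L_m(k) + L_n\bigl( (Q^T)^m k \bigr).
\]
Averaging along arithmetic progressions $n = qN + r$ and letting $q \to \infty$, a Fekete-type argument combined with an ergodic-mean statement for $k \mapsto Q^T k$ (Birkhoff on a torus when $Q^T$ is integral; a Sobol-type statement on the Bohr compactification in the general quasiperiodic case, as in \cite{BHL}) yields $\chi^B(k) \leqslant \frac{1}{N}\, \MM(L^{}_{\nts N})$ for every $N \in \NN$ and a.e.\ $k \in \RR^d$.

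Next, I would evaluate $\MM(\log \| B^{(N)} \|^{2}_{\mathrm{F}})$ in the style of Theorem~\ref{thm:geq}. Each entry $B^{(N)}_{ij}(k)$ is a trigonometric polynomial whose frequencies lie in the finite displacement set $T^{(N)}_{ij}$; hence $B^{(N)}_{ij}(k) = P^{(N)}_{ij}(\tilde k)$ is a section of a function on a $D$-torus $\TT^D$, where $D$ is the $\ZZ$-rank of the subgroup of $\RR^d$ generated by all occurring displacements. Jensen's inequality then gives
\[
\exp\Bigl( \MM\bigl( \log \| B^{(N)}(.) \|^{2}_{\mathrm{F}} \bigr) \Bigr)
\, \leqslant \int_{\TT^D} \sum_{i,j=1}^{\aaa} \bigl\lvert P^{(N)}_{ij}(\tilde k) \bigr\rvert^2 \dd \tilde k
\, = \sum_{i,j} \bigl( M^N \bigr)_{\nts ij},
\]
where the last equality uses Parseval together with the fact that each control point sits in a unique level-$N$ supertile, so the coefficients of $P^{(N)}_{ij}$ lie in $\{0,1\}$, and $B^{(N)}(0) = M^N$. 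Primitivity of $M$ together with Perron--Frobenius then yields $\sum_{i,j}(M^N)_{ij} \sim C \lvert \det(Q) \rvert^N$, since $\lvert \det(Q) \rvert$ is the PF eigenvalue of $M$ (as stated in the paragraph following Eq.~\eqref{eq:gen-FT-eq}). Dividing by $2N$ and letting $N \to \infty$ produces $\chi^B(k) \leqslant \log \sqrt{\lvert \det(Q) \rvert}$ for a.e.\ $k$, i.e.\ $\chi^{}_{\min}(k) \geqslant 0$ by \eqref{eq:L-general}.

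The second assertion is then immediate by contraposition: if the AC part of the diffraction is non-trivial, then the local sharpening of Theorem~\ref{thm:higher-D} forbids $\chi^{}_{\min}(k) > 0$ on a full-measure subset of any open neighbourhood of $0$; combined with the just-established a.e.\ inequality $\chi^{}_{\min}(k) \geqslant 0$, this forces $\chi^{}_{\min}(k) = 0$ on a subset of positive measure. Under the additional hypothesis that $\chi^{}_{\min}$ is a.e.\ constant, this constant must equal $0$, and the positive-measure subset can be upgraded to one of full measure. The main obstacle is the ergodic averaging step in the anisotropic $d$-dimensional setting: when $Q^T$ is not integral, $L^{}_{\nts N}$ is only quasiperiodic on $\RR^d$, and one must justify that arithmetic-progression averages along $Q^T$-orbits converge to $\MM(L^{}_{\nts N})$. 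This is the natural multidimensional analogue of the one-dimensional Sobol/Bohr-ergodic step treated in \cite{BHL}, and I expect it to be technical rather than conceptual.
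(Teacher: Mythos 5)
Your proposal is correct and follows essentially the same route as the paper: the corollary is obtained there precisely by repeating the proof of Theorem~\ref{thm:geq} with minor modifications (subadditivity of $L_n$, Jensen plus Parseval with $\{0,1\}$-coefficients, Perron--Frobenius asymptotics with PF eigenvalue $\lvert\det(Q)\rvert$) and then combining the resulting a.e.\ bound $\chi^{}_{\min}(k)\geqslant 0$ with the contrapositive of the local sharpening of Theorem~\ref{thm:higher-D}. The quasiperiodic averaging issue you flag for non-integral $Q^T$ is handled in the paper exactly as you anticipate, via Sobol-type arguments and representation of the mean as an integral over a torus.
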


Let us now apply the theory to two planar examples of rather different
nature. Further cases are discussed in detail in \cite{BG-block}.

\subsection{Block substitutions}\label{sec:block}

Here, we consider unit cubes of finitely many types, say white and
black in the binary case, together with a diagonal matrix
$Q = \diag (q^{}_{1}, \ldots, q^{}_{d} )$, where all
$q^{}_i \geqslant 2$ for expansiveness. Using the key argument around
Jensen's inequality and Parseval's equation from the proof of
Theorem~\ref{thm:pos-abelian}, it is not difficult to prove the
following higher-dimensional version of \cite[Cor.~9]{Neil}; see
\cite{BG-block,Neil-diss} for details and further consequences.

\begin{theorem}\label{thm:gen-absence}
  Let\/ $\varrho$ be a primitive, binary block substitution in\/ $d$
  dimensions, with expansion matrix\/
  $Q = \diag (q^{}_{1}, \ldots, q^{}_{d} )$ where\/
  $q^{}_i \geqslant 2$ for\/ $1 \leqslant i \leqslant d$. Then, the
  minimal Lyapunov exponent, for a.e.\ $k\in\RR^d$, is bounded away
  from\/ $0$, and we have absence of absolutely continuous
  diffraction. Moreover, all dynamical spectral measures are
  singular. \qed
\end{theorem}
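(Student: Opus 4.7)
The plan is to follow the template of Theorem~\ref{thm:pos-abelian} (extended as in Theorem~\ref{thm:extend}) and adapt it to the $d$-dimensional diagonal block setting. First I would observe that the digit matrices of a binary block substitution lie in $\{I, S\} \subset \Mat(2, \CC)$, with $S$ the swap matrix, so the IDA is Abelian (in fact $\cB \simeq \CC \oplus \CC$). Decompose the block $P = \{0,\ldots,q^{}_{1} - 1\} \times \cdots \times \{0,\ldots,q^{}_{d} - 1\}$ as $S^{}_{c} \,\dot{\cup}\, S^{}_{b}$ according to whether the column at $x\in P$ is constant or a swap. Writing $u^x = \ee^{2\pi \ii k\cdot x}$ and simultaneously diagonalising all digit matrices in the ($k$-independent) basis $v^{}_{\pm} = (1, \pm 1)^T$, the Fourier matrix $B(k)$ has eigenvalues
\[
    \beta^{}_{1} (k) \, = \sum_{x \in P} u^x
    \quad \text{and} \quad
    \beta^{}_{2} (k) \, = \sum_{x \in S_{c}} u^x - \sum_{x \in S_{b}} u^x \ts .
\]

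Since $Q = \diag(q^{}_{1}, \ldots, q^{}_{d})$ is integer with all $q^{}_{i} \geqslant 2$, the toral endomorphism $k \mapsto Q^T k \bmod \ZZ^d$ on $\TT^d$ is ergodic with respect to Haar measure. As the eigenvectors of the cocycle $B^{(n)} (k)$ are $k$-independent, the iteration is simultaneously diagonal in this basis, and Oseledec's theorem gives that the two Lyapunov exponents exist a.e., are a.e.\ constant, and equal
\[
   \chi^{}_{j} \, = \, \log\sqrt{\lvert\det Q\rvert}
   \, - \, \mathfrak{m} (\beta^{}_{j}) \ts , \quad j \in \{1,2\},
\]
where $\mathfrak{m}$ denotes the logarithmic Mahler measure over $\TT^d$. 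For $\beta^{}_{1}$, a trivial factorisation yields $\beta^{}_{1} = \prod_{i=1}^{d} (1 + u^{}_{i} + \cdots + u_{i}^{q_{i}-1})$, a product of cyclotomic factors in separated variables, hence $\mathfrak{m}(\beta^{}_{1}) = 0$ and $\chi^{}_{1} = \log\sqrt{\lvert\det Q\rvert} > 0$.

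The main estimate concerns $\chi^{}_{2}$. Since the coefficients of $\beta^{}_{2}$ are all $\pm 1$ and supported on $P$, Parseval gives $\|\beta^{}_{2}\|^{2}_{2} = \lvert\det Q\rvert$, and the Jensen--Cauchy--Schwarz chain
\[
    \exp\bigl(\mathfrak{m}(\beta^{}_{2})\bigr) \, \leqslant
    \, \|\beta^{}_{2}\|^{}_{1} \, \leqslant \,
    \|\beta^{}_{2}\|^{}_{2} \, = \, \sqrt{\lvert\det Q\rvert}
\]
delivers the bound $\chi^{}_{2} \geqslant 0$, with strict inequality unless $|\beta^{}_{2}|$ is a.e.\ constant. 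The hard part is ruling out this Rudin--Shapiro-type degeneracy, which is exactly where constant-length binary block substitutions separate from Rudin--Shapiro. For this I would argue via the autocorrelations $c^{}_{z} \defeq \sum_{x\in P\cap (P-z)} \epsilon^{}_{x}\, \epsilon^{}_{x+z}$, where $\epsilon^{}_{x}\in\{\pm 1\}$ are the coefficients of $\beta^{}_{2}$: if $|\beta^{}_{2}|^{2} \equiv \lvert\det Q\rvert$, then $c^{}_{z} = 0$ for every $z\neq 0$, so each $c^{}_{z}$ must be a sum of an even number of $\pm 1$ terms. But $|P\cap (P-z)| = \prod_{i=1}^{d} (q^{}_{i} - |z^{}_{i}|)$ for $|z^{}_{i}| < q^{}_{i}$, and picking $z^{}_{i}\in[0,q^{}_{i})$ of opposite parity to $q^{}_{i}$ (always possible as $q^{}_{i} \geqslant 2$) makes this product odd, a contradiction. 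Hence $\chi^{}_{2} > 0$ too, and both exponents are bounded away from $0$ (in fact constant in $k$), so Theorem~\ref{thm:higher-D} rules out absolutely continuous diffraction.

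To upgrade this to the statement about dynamical spectral measures, I would invoke the higher-dimensional analogue of the Bartlett--Queff\'elec argument used in the final step of the proof of Theorem~\ref{thm:pos-abelian}: a spectral measure of maximal type can be synthesised from spectral measures of indicator functions of tiles and supertiles, and these are identified via \cite{BLvE} with the diffraction measures of $\varrho$ and its self-similar rescalings, all of which we have just shown to be purely singular. Consequently, no spectral measure of $(\YY, \RR^d)$ carries an absolutely continuous component.
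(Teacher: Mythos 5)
Your overall route is the one the paper intends here: the statement is only sketched in the text, with a pointer to the Jensen--Parseval mechanism of Theorem~\ref{thm:pos-abelian} and to \cite{Neil,BG-block}, and you reproduce exactly that mechanism --- reduce the $2\times 2$ Fourier cocycle to two scalar polynomials $\beta_1,\beta_2$, identify the exponents as $\log\sqrt{\lvert\det Q\rvert}-\mathfrak{m}(\beta_j)$ via the ergodic toral endomorphism $k\mapsto Q^Tk \bmod \ZZ^d$, and beat the threshold by $\exp(\mathfrak{m})\leqslant\|\cdot\|_1\leqslant\|\cdot\|_2$; the passage to dynamical spectral measures is verbatim the last step of the proof of Theorem~\ref{thm:pos-abelian}. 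Your parity argument excluding constant $\lvert\beta_2\rvert$ is correct and a nice self-contained way to see why Rudin--Shapiro behaviour needs more than two letters, although it is more than is needed: the single corner shift $z=(q_1-1,\ldots,q_d-1)\neq 0$ has $\lvert P\cap(P-z)\rvert=1$, so $c_z=\pm 1\neq 0$ already does the job.

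There is, however, one genuine gap: the theorem is stated for \emph{arbitrary} primitive binary block substitutions, and your opening claims fail as soon as some column is a coincidence (a constant column word). At such a position the digit matrix is $\left(\begin{smallmatrix}1&1\\0&0\end{smallmatrix}\right)$ or $\left(\begin{smallmatrix}0&0\\1&1\end{smallmatrix}\right)$, not $I$ or $S$; the IDA is then the non-commutative period-doubling algebra $\cB_{\mathrm{pd}}$ of Example~\ref{ex:pd}, only the line $\CC\ts(1,-1)^T$ is invariant, and the cocycle is merely triangularisable in a fixed basis rather than diagonalisable. Correspondingly, your formula for $\beta_2$ assigns the wrong coefficients: identity columns contribute $+1$, swap columns $-1$, and coincidence columns $0$ (not $+1$). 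The repair is short and actually makes this case easier: the Lyapunov exponents of a triangular cocycle with integrable diagonal are still the Birkhoff averages of the logarithms of the diagonal entries, and with at least one coincidence Parseval gives $\|\beta_2\|_2^2\leqslant\lvert\det Q\rvert-1$, so $\chi_2>0$ with no degeneracy discussion at all. One should also record that $\det B(k)=\beta_1(k)\ts\beta_2(k)\not\equiv 0$ unless every column is a coincidence --- in which case the hull is periodic and there is nothing to prove --- so that the non-degeneracy hypothesis of Theorem~\ref{thm:higher-D} is met. With that case added, your argument is complete and agrees with the proof the paper defers to \cite{Neil,BG-block}.
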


Likewise, we easily obtain the following version
of Theorem~\ref{thm:pos-abelian} in higher dimensions.

\begin{theorem}\label{thm:higher-abelian}
  Let\/ $\varrho$ be a primitive, bijective block substitution in\/
  $d$ dimensions, whose IDA is Abelian, with\/ $Q$ as in
  Theorem~\textnormal{\ref{thm:gen-absence}}. Then,
  $\chi^{}_{\min} (k) > 0$ holds for a.e.\ $k\in\RR^{d}$, and we have
  absence of absolutely continuous diffraction. As before, all
  spectral measures are singular. \qed
\end{theorem}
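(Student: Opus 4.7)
The plan is to run the argument of Theorem~\ref{thm:pos-abelian} almost verbatim, with the one-dimensional torus $[0,1)$ under $k\mapsto L\ts k$ replaced by $\TT^{d}$ under the toral endomorphism $k\mapsto Q^{T} k$ modulo $\ZZ^{d}$. Because $Q=\diag(q^{}_{1},\ldots,q^{}_{d})$ has integer diagonal entries $q^{}_{i}\geqslant 2$, this endomorphism is ergodic with respect to Haar measure on $\TT^{d}$, so Oseledec's multiplicative ergodic theorem again supplies constant Lyapunov exponents of the Fourier matrix cocycle $B^{(n)}(k)$ from Lemma~\ref{lem:gen-F-matrix}, for a.e.\ $k\in\RR^{d}$.

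First I would exploit Abelianness of the IDA to simultaneously diagonalise all digit matrices $D^{}_{x}$ by a $k$-independent unitary $U$. Bijectivity forces each $D^{}_{x}$ to be a permutation matrix, so the diagonal entries of $U D^{}_{x} U^{-1}$ are values of characters $\rho^{}_{j}\in\widehat{G}$ of the Abelian generating group $G$. Consequently, the eigenvalues of $B(k)$ are multivariate trigonometric polynomials
\[
   \beta^{}_{j}(k) \, = \sum_{x\in S^{}_{T}} \overline{\rho^{}_{j}(g^{}_{x})}\,\ee^{2\pi\ii k\cdot x} ,
\]
with coefficients of unit modulus, and the eigenvectors are independent of $k$. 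The cocycle thus decomposes into $\aaa$ scalar cocycles, and Birkhoff's ergodic theorem on $\TT^{d}$ yields, for a.e.\ $k$ and every $j$,
\[
   \chi^{}_{j} \, = \, \log\sqrt{\lvert\det Q\rvert} \, - \int_{\TT^{d}} \log\lvert\beta^{}_{j}(k)\rvert \dd k \, = \, \log\sqrt{\lvert\det Q\rvert} \, - \, \mathfrak{m}^{}_{d}(\beta^{}_{j}) ,
\]
with $\mathfrak{m}^{}_{d}$ denoting the natural multivariate logarithmic Mahler measure.

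To deduce $\chi^{}_{j}>0$ for every $j$, I would invoke the chain of strict inequalities
\[
   \exp\bigl(\mathfrak{m}^{}_{d}(\beta^{}_{j})\bigr) \, < \, \|\beta^{}_{j}\|^{}_{1} \, < \, \|\beta^{}_{j}\|^{}_{2} \, = \, \sqrt{\lvert S^{}_{T}\rvert} \, = \, \sqrt{\lvert\det Q\rvert} ,
\]
coming from Jensen's inequality, Cauchy--Schwarz on $\TT^{d}$, and Parseval's identity, respectively. The final equality uses the block structure: the rule fills every lattice site of $Q([0,1)^{d})\cap\ZZ^{d}$, so $\lvert S^{}_{T}\rvert=q^{}_{1}\cdots q^{}_{d}=\lvert\det Q\rvert$, and each of these coefficients has modulus one. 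Since $\lvert\det Q\rvert\geqslant 2$, $\beta^{}_{j}$ is genuinely non-monomial, so $\lvert\beta^{}_{j}\rvert$ is not a.e.\ constant on $\TT^{d}$; this secures strictness in both of the first two steps. Hence $\chi^{}_{\min}(k)>0$ for a.e.\ $k\in\RR^{d}$, and Theorem~\ref{thm:higher-D} immediately forbids any absolutely continuous diffraction component. Singularity of all dynamical spectral measures then follows by the same synthesis used at the end of the proof of Theorem~\ref{thm:pos-abelian}: a maximal spectral measure is realised as a finite linear combination of spectral measures of characteristic functions of tiles and supertiles via \cite{BLvE}, and each of these is now singular.

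The main obstacle I expect is bookkeeping rather than substance: verifying that the $k$-independent simultaneous diagonalisation really reduces the full matrix cocycle on $\TT^{d}$ to $\aaa$ scalar cocycles whose exponents can be evaluated by Birkhoff's theorem on the torus, and that Oseledec's regular points form a set of full Lebesgue measure in $\RR^{d}$ (rather than just on $\TT^{d}$), which is transparent from the $\ZZ^{d}$-periodicity of $B(k)$. Abelianness of the IDA is precisely what delivers a unitary diagonaliser independent of $k$, so the reduction is clean; the strictness of the Jensen--Cauchy--Parseval chain, in turn, is automatic once one observes that the block-substitution hypothesis forces every coefficient to have modulus one and that $\lvert\det Q\rvert\geqslant 2$ precludes the degenerate monomial case.
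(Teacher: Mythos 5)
Your proposal is correct and follows essentially the same route the paper intends (and only sketches): simultaneous $k$-independent diagonalisation of the permutation digit matrices via characters of the Abelian group, the strict Jensen--Cauchy--Schwarz--Parseval chain bounding the multivariate logarithmic Mahler measure of each eigenvalue polynomial below $\log\sqrt{\lvert\det Q\rvert}$, Birkhoff/Oseledec over the toral endomorphism $k\mapsto Q^{T}k$, and the transfer to spectral measures via maximal-type spectral measures of tile and supertile indicator functions. Your observation that non-monomiality (guaranteed by $\lvert S_{T}\rvert=\lvert\det Q\rvert\geqslant 2$ unimodular coefficients) secures strictness, and that $\det(B(k))=\prod_{j}\beta_{j}(k)$ is then a non-trivial trigonometric polynomial so the hypothesis of Theorem~\ref{thm:higher-D} holds, completes the argument as in the one-dimensional case.
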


\begin{remark}
  In both theorems, we are in the situation that our cocycle is
  effectively defined over a compact dynamical system, namely the one
  induced by the action of $Q$ on the $d$-torus. Consequently, by
  Oseledec's theorem, we know that the Lyapunov exponents almost
  surely exist as limits, and are constant on a set of full measure.

  Also, in both cases, we know that the treatment of the diffraction
  measure gives access to a representative spectral measure of maximal
  type, via the lookup functions of tiles and supertiles at the
  origin, and thus determines its spectral type as well. In this sense,
  the answer for these cases is complete.
  \exend
  \end{remark}

Let us illustrate Theorem~\ref{thm:higher-abelian} with a bijective
ternary block substitution that also relates to our previous treatment
of Abelian substitutions.

\begin{example}[Planar block substitution with three tiles]\label{ex:block}
  Consider the inflation rule 
\[
    \includegraphics[width=0.9\textwidth]{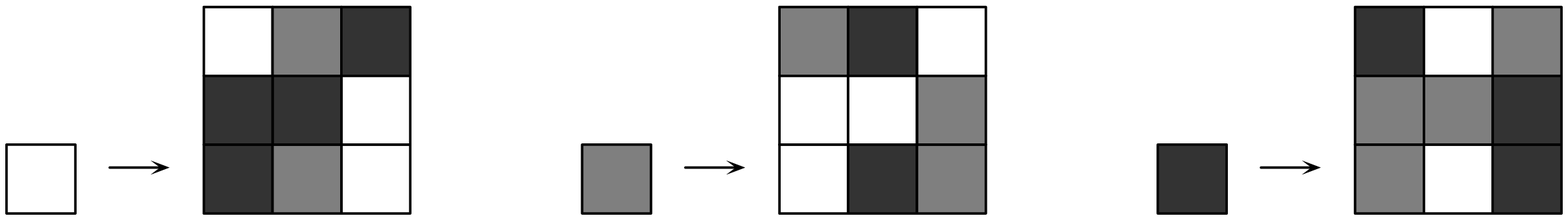}
\]
which is clearly primitive and bijective, with $Q = \diag (3,3)$.  By
inspection, one verifies that the permutation subgroup is isomorphic
to $C_3$.

Its Fourier matrix reads
\[
    B (k_1 , k_2 ) \, = \, \begin{pmatrix}
    x^2 (1+y) + y^2 & 1+y+xy+x^2y^2 & x (1+y^2) \\
    x (1+y^2) & x^2 (1+y) + y^2 & 1+y+xy+x^2y^2  \\
    1+y+xy+x^2y^2 & x (1+y^2) & x^2 (1+y) + y^2 
    \end{pmatrix}
\]
with $x=\ee^{2\pi \ii k_1}$ and $y=\ee^{2\pi \ii k_2}$.  The
eigenvalues of the Fourier matrix are given by
\begin{align*}
  \beta_1(k_1,k_2) &= (1+x+x^2)(1+y+y^2) \ts , \\
  \beta_2(k_1,k_2) &= (x^2+x^2y+y^2 )+\omega(1+y+xy+x^2y^2)
     +\omega^2(x+xy^2) \ts , \\
  \beta_3(k_1,k_2) &= (x^2+x^2y+y^2 )+\omega^2(1+y+xy+x^2y^2)
     +\omega(x+xy^2) \ts ,
\end{align*}
where $\omega=\ee^{\frac{2\pi \ii}{3}}$. As polynomials in two
variables with complex coefficients, all of them are of height $1$,
and have logarithmic Mahler measures strictly less than
$\log \sqrt{\lvert \det (Q)\rvert} = \log (3)$. In line with
Theorem~\ref{thm:higher-D} and the comment following it, we thus see
that the diffraction measures are singular, as are all spectral
measures of the dynamical spectrum.

It is clear that there is an abundance of similar examples.
As long as the permutation subgroup is Abelian,
the corresponding bounds do not require $Q$ to be a
homothety. 
\exend
\end{example}

\subsection{The Godr\`{e}che--Lan\c{c}on--Billard
     tiling}\label{sec:LB}

\begin{figure}[t]
 \includegraphics[width=0.83\textwidth]{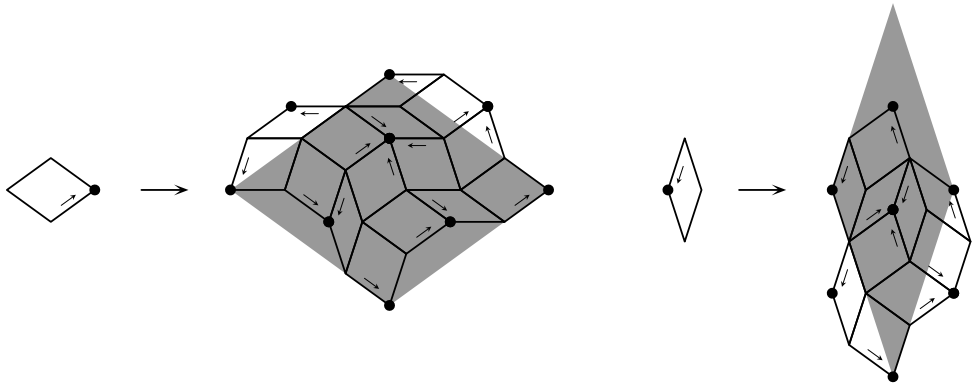}
\caption{Modified inflation rule for the
    Godr\`{e}che--Lan\c{c}on--Billiard tiling, with the control points
    marked by black dots.\label{fig:LB}}
\end{figure}

Here, we briefly consider a non-PV inflation tiling of the plane that
was described by Godr\`{e}che and Lan\c{c}on in \cite{GL}, following
up an idea from Lan\c{c}on and Billiard \cite[Fig.~4]{LB}.  It uses
the two rhombuses of the Penrose tiling for a primitive inflation rule
with expansive map $Q = \lambda R$, where
\begin{equation}\label{eq:lb-lam}
    \lambda \, = \, 2 \cos \bigl( \myfrac{\pi}{10}\bigr)
    \, = \, \sqrt{\myfrac{1}{2}\bigl( 5 + \sqrt{5}\, \bigr) }
    \, \approx \, 1.902 \ts 
\end{equation}
and $R$ is a rotation through $\frac{\pi}{10}$; see
\cite[Sec.~6.5.1]{TAO} for a detailed account, including its
reformulation as a stone inflation with two fractiles, which we do not
use here.  This inflation rule defines an FLC tiling hull as usual,
and any two elements are locally indistinguishable. We call the
elements GLB tilings from now on. Note that $\lambda$ is not a PV
number, so that one cannot have non-trivial eigenfunctions by
\cite{Boris}, which also implies that the pure point part of the
diffraction is trivial.  As we shall see, we cannot have absolutely
continuous diffraction either, which means that the diffraction is
essentially singular continuous.

For our purposes here, it is a little easier to harvest the underlying
fivefold rotation symmetry, and to work with the square of the
inflation in conjunction with an added rotation through $\pi$, as
illustrated in Figure~\ref{fig:LB}.  As one can quickly check, this
modified rule defines the same tiling hull as the original
one,\footnote{Some statistical data, such as the relative frequency of
  the prototiles or the frequency of vertices that carry a control
  point, can be extracted with standard Perron--Frobenius arguments
  from either rule.} now with the expansive map
$\tilde{Q} = \lambda^2 \one$.  Each prototile contains an orientation
vector as a marker, and the unique vertex it points to is chosen as
its control point. Note that control points of different tiles can
coincide, and that the vertex points of any GLB tiling differ in
whether they carry a control point or not. Giving weight $\frac{1}{5}$
and $\frac{2}{5}$ to the control points of thick and thin rhombuses,
respectively, one checks that these weights always add up to $1$ in an
occupied vertex point; compare \cite[Figs.~6.27 and 6.28]{TAO}. The
relative frequency of control points among the vertices is
$\frac{5-\sqrt{5}}{10} \approx 0.276 \ts\ts 393$.

Let $B^{(n)} (k)$ denote the Fourier matrix cocycle for the modified
inflation rule. One can easily check numerically that
$\det ( B^{(1)} (k)) \ne 0$ for some $k=k_0$, hence by continuity for
all $k$ in a small ball around $k_0$, and thus a.e.\ $k\in\RR^2$
by analyticity. Then, to apply Theorem~\ref{thm:higher-D}, we need to
compare $\frac{1}{N} \MM \bigl( \log \| B^{(N)} (.) \| \bigr)$ with
$\frac{1}{2} \log \bigl(\det \bigl(\tilde{Q} \bigr)\bigr) = 2 \log
(\lambda)$, with $\lambda$ as in \eqref{eq:lb-lam}.  Employing the
Frobenius norm, we may equivalently compare
$\frac{1}{N} \MM \bigl( \log \| B^{(N)} (.) \|^{2}_{\mathrm{F}}
\bigr)$ with $4 \log (\lambda) \approx 2.571 {\ts} 862$. Here,
$\| B^{(N)} (k) \|^{2}_{\mathrm{F}}$ is a non-negative trigonometric
polynomial in $k$ that is also quasiperiodic and, for any fixed $N$,
bounded away from $0$. Its logarithm is then a Bohr almost periodic
function that can be represented as a section of a periodic function
on the $4$-torus, and its mean is then given as an integral over
$\TT^4$. The numerical calculation leads to the values shown in
Table~\ref{tab:LB}, and thus to the following conclusion.

\begin{table}
\caption{Some upper bounds for $\chi^{B} (k)$ via means as
  explained in the main text, which are to be compared with
  the threshold value $4 \log (\lambda) \approx 2.571 {\ts}862$.
  The numerical error is less than $0.005$ in all cases 
  listed.\label{tab:LB}}
\renewcommand{\arraystretch}{1.2}
\begin{tabular}{|c|c@{\;\,\;}c@{\;\,\;}c@{\;\,\;}c@{\;\,\;}c
      @{\;\,\;}c@{\;\,\;}c@{\;\,\;}c|}
\hline
$N$ & 6 & 7 & 8 & 9 & 10 & 11 & 12 & 13 \\
\hline
$\tfrac{1}{N} \MM \bigl( \log \| B^{(N)}(.) \|^{2}_{\mathrm{F}} \bigr)$ & 
 2.643 & 2.572 & 2.517 & 2.474 & 2.440 & 2.411 & 2.387 & 2.367  \\[0.5mm]
  \hline
\end{tabular}
\end{table}

\begin{theorem}
  The Fourier transform\/ $\ts\widehat{\vU}$ of the pair correlation
  measures for any GLB tiling with the control points as defined above
  is a vector of singular measures, and\/ $\widehat{\vU}$ is the same
  for all elements of the tiling hull.
    
  For any GLB tiling, with vertex set $\vL$, the measure\/
  $\omega = \sum_{x\in \vL}\, w(x) \ts \delta_x$ with the coincidence
  weights\/ $w(x)$ from above has singular diffraction with trivial
  pure point part.  \qed
\end{theorem}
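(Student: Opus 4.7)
The plan is to combine Theorem~\ref{thm:higher-D} (to exclude absolutely continuous components of $\widehat{\vU}$) with the non-PV property of $\lambda$ from~\eqref{eq:lb-lam} (to exclude non-trivial Bragg peaks), and to invoke strict ergodicity for independence of the choice of $\vL\in\YY$. All three ingredients are already in place, so the proof amounts to applying the developed machinery to the numerical data of Table~\ref{tab:LB} for the modified inflation with $\tilde{Q} = \lambda^2 \one$.

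First, I would verify the non-degeneracy hypothesis of Theorem~\ref{thm:higher-D}: since the entries of $B$ are trigonometric polynomials, $\det(B(\,\cdot\,))$ is real-analytic on $\RR^2$, so a single numerical evaluation at the point $k_0$ indicated in the text certifies that its zero set has Lebesgue measure zero. Next, I would apply the multivariate form of Lemma~\ref{lem:bound}: each entry of $B^{(N)}(k)$ is a quasiperiodic trigonometric polynomial realised as a section of a $\TT^4$-periodic function, reflecting the algebraic degree $4$ of $\lambda$ over $\QQ$, so $\log \|B^{(N)}\|^2_{\mathrm{F}}$ is Bohr almost periodic and its mean is given by an integral over $\TT^4$ as in~\eqref{eq:mean}. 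From Table~\ref{tab:LB}, already at $N=8$ one has
\[
   \myfrac{1}{N}\, \MM \bigl( \log \| B^{(N)}(.) \|^{2}_{\mathrm{F}} \bigr)
   \, \leqslant \, 2.517 \, < \, 4 \log (\lambda) \, \approx \, 2.5719 \ts ,
\]
and the bound decreases monotonically for larger $N$. Dividing by $2$ and invoking Lemma~\ref{lem:bound} yields a uniform gap $\varepsilon > 0$ with $\chi^{B}(k) \leqslant \log\sqrt{\lvert\det(\tilde{Q})\rvert} - \varepsilon$ for a.e.\ $k\in\RR^2$, whence Theorem~\ref{thm:higher-D} forces $\widehat{\vU}_{\mathsf{ac}} = 0$. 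The components of $\widehat{\vU}$ are then singular with respect to Lebesgue measure, which proves the first assertion; their independence of $\vL$ is immediate from~\eqref{eq:corr-as-conv} together with strict ergodicity of $(\YY,\RR^2)$.

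For the diffraction of $\omega$, I would invoke Remark~\ref{rem:diffraction}: packaging the coincidence weights $w(x)$ into a weight vector $u\in\CC^{\aaa}$ on the prototile types (namely $\tfrac{1}{5}$ for control points of thick rhombuses and $\tfrac{2}{5}$ for thin ones), the autocorrelation $\gamma_u$ is a linear combination of the $\vU_{ij}$, so $\widehat{\gamma}_u$ inherits singularity from $\widehat{\vU}$. Triviality of its pure point part follows from the non-PV nature of $\lambda$: by Solomyak's result~\cite{Boris} cited after~\eqref{eq:lb-lam}, the system $(\YY,\RR^2)$ admits no non-trivial continuous eigenfunctions, so the only surviving Bragg peak of $\widehat{\gamma}_u$ is the always-present central one at $k=0$ coming from~\eqref{eq:pp-dens}. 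The main obstacle is the numerical tightness of the threshold at $N=7$, where the bound essentially coincides with $4\log(\lambda)$; one must therefore work with $N\geqslant 8$ and keep the numerical error (stated as $<0.005$ in Table~\ref{tab:LB}) small enough to certify $\varepsilon>0$. This is delicate but routine, and the monotone decay of the bounds with $N$ (giving a comfortable margin of order $0.1$ by $N=10$) leaves ample safety room.
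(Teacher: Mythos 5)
Your proposal is correct and follows essentially the same route as the paper: certify $\det(B(k))\ne 0$ a.e.\ by analyticity from one numerical evaluation, bound $\chi^{B}$ via the quasiperiodic mean over $\TT^4$ using the $N\geqslant 8$ entries of Table~\ref{tab:LB}, apply Theorem~\ref{thm:higher-D} to kill the absolutely continuous part, and use the non-PV nature of $\lambda$ together with Solomyak's result to reduce the pure point part to the central peak. Your explicit caution about the tightness of the bound at $N=7$ and the reliance on $N\geqslant 8$ is a sensible refinement of what the paper leaves implicit, but it does not change the argument.
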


Indeed, as mentioned above, we know that the pure point part is the
trivial one.  The singular diffraction measure
$\widehat{\gamma^{}_{\omega}}$ is thus of the form
\[
    \widehat{\gamma^{}_{\omega}} \, = \, I^{}_{0} \, \delta^{}_{0}
    + \bigl(  \widehat{\gamma^{}_{\omega}}\bigr)_{\mathsf{sc}} 
    \quad \text{with} \quad  I^{}_{0} \, = \,  \left(
    \frac{5-\sqrt{5}}{10} \, \dens (\vL) \right)^{\! 2} ,
\]
where
$\dens (\vL) = \frac{1+\sqrt{5}}{5 } \lambda = \frac{2}{5} \sqrt{5+2
  \sqrt{5}\ts } \approx 1.231$, if we work with rhombuses of unit edge
length; compare \cite[Cor.~9.1]{TAO}.  Now, it is not difficult to see
that the vertex set with uniform weights is locally derivable from the
pattern with the weights used above and vice versa, which means they
are MLD. Although this will lead to different pair correlation and
diffraction measures, their spectral type remains unchanged, with the
following consequence.

\begin{coro}
  The diffraction measure of the uniform Dirac comb\/ $\delta_{\vL}$
  on the vertex set\/ $\vL$ of any GLB tiling is singular, and of the
  form\/
  $\,\widehat{\gamma} = \dens (\vL)^2 \ts \delta^{}_{0} +
  (\widehat{\gamma})^{}_{\mathsf{sc}}$.  \qed
\end{coro}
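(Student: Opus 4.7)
The plan is to reduce the corollary to the first part of the preceding theorem via the MLD relation between tiles and vertices, combined with Solomyak's absence-of-eigenvalues result. Because the GLB hull is FLC, the vertex set of any tiling admits a finite disjoint decomposition $\vL = \vL_{1} \dot{\cup} \cdots \dot{\cup} \vL_{r}$ by vertex-star type. Each $\vL_{\alpha}$ is locally derivable from the tile decomposition and conversely, so every $\delta_{\vL_\alpha}$ can be written as a finite integer combination of translates of the tile-control-point combs $\delta_{\vL_i}$, via an inclusion--exclusion that records how many tiles of each type carry a given vertex type at a fixed relative position.

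This linear relation survives Eberlein convolution. Each cross-correlation $\widetilde{\delta_{\vL_\alpha}} \circledast \delta_{\vL_\beta}$ becomes a finite linear combination of translates of the tile pair correlation measures $\vU^{}_{ij}$, and Fourier transformation then produces a finite sum of the $\widehat{\vU^{}_{ij}}$ multiplied by character functions. Since each $\widehat{\vU^{}_{ij}}$ is singular by the first part of the theorem, and singularity is preserved under multiplication by bounded continuous functions and under finite linear combinations, every cross correlation measure for the vertex partition has singular Fourier transform. Decomposing $\delta_{\vL} = \sum_{\alpha} \delta_{\vL_\alpha}$ and summing the corresponding correlations shows that $\widehat{\gamma}$ has no absolutely continuous part.

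For the pure point part, note that $(\widehat{\gamma})^{}_{\mathsf{pp}}$ is concentrated on the group of topological eigenvalues of $(\YY,\RR^2)$. Because the inflation multiplier $\lambda$ from \eqref{eq:lb-lam} is not a PV number, the result of Solomyak \cite{Boris} rules out non-trivial continuous eigenfunctions, so $(\widehat{\gamma})^{}_{\mathsf{pp}}$ is concentrated at $k=0$, where its intensity equals $\dens(\vL)^2$ by the density-squared identity analogous to \eqref{eq:pp-dens}, applied with all weights set to one. Combining both paragraphs yields $\widehat{\gamma} = \dens(\vL)^2 \delta_{0} + (\widehat{\gamma})^{}_{\mathsf{sc}}$ as claimed. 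The heavy lifting is already done by the Lyapunov upper bound in the preceding theorem; the main obstacle specific to this corollary is the explicit inclusion--exclusion bookkeeping, which is finite by FLC but requires the full vertex-to-tile incidence structure before one can safely replace vertex combs by linear combinations of translates of the tile control-point combs.
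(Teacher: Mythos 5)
Your overall strategy is sound and genuinely different from the paper's. The paper disposes of the corollary in one line: the uniform vertex comb is locally derivable from the weighted control-point comb treated in the theorem (and vice versa), and the spectral type of the diffraction is unchanged under this passage. You instead make the transfer explicit by writing $\delta_{\vL}$ as a finite linear combination of translates of the control-point combs $\delta_{\vL_i}$, pushing that through the Eberlein convolution to express $\gamma$ in terms of the $\vU_{ij}$, and then Fourier transforming; since multiplication by the resulting trigonometric polynomials and finite summation preserve singularity, $\widehat{\gamma}$ inherits singularity from $\widehat{\vU}$. Your treatment of the pure point part (eigenvalues of $(\YY,\RR^2)$ are trivial because $\lambda$ is not PV, by Solomyak, and the intensity at $0$ is $\dens(\vL)^2$) coincides with the paper's. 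The explicit route has the merit of not appealing to any general invariance statement for diffraction under MLD; its cost is that the decomposition step must actually be carried out, and this is where your write-up has a gap.

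The gap is the claim that each $\delta_{\vL_\alpha}$ (vertices of a fixed star type) is a finite \emph{integer} combination of translates of the $\delta_{\vL_i}$ ``via inclusion--exclusion''. A fixed vertex-star type is collared information, and I see no reason why such a collared comb should be expressible through the \emph{uncollared} control-point combs at all; moreover, integrality of the coefficients is neither needed nor plausible. Fortunately, the per-type decomposition is unnecessary: what you actually use downstream is only a representation of the total comb, $\delta_{\vL}=\sum_{i}\sum_{v\in V_i} c^{}_{i,v}\, \delta_v * \delta_{\vL_i}$, where $V_i$ is the vertex set of prototile $i$ relative to its control point. Such a representation exists with the explicit real weights $c^{}_{i,v}=\theta^{}_{i,v}/(2\pi)$, $\theta^{}_{i,v}$ being the interior angle of prototile $i$ at the vertex $v$: since the GLB tiling is edge-to-edge, the angles of the tiles incident to any vertex sum to $2\pi$, so the weights sum to $1$ at every vertex and to $0$ elsewhere. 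This is precisely the generalisation of the coincidence weights $\frac{1}{5}$ and $\frac{2}{5}$ used in the theorem (those are $72^\circ/360^\circ$ and $144^\circ/360^\circ$ for the distinguished vertices). With this replacement your argument closes: $\widehat{\gamma}$ is a finite sum of bounded continuous functions times the singular measures $\widehat{\vU}_{ij}$, hence singular, and the pure point part reduces to $\dens(\vL)^2\ts\delta^{}_0$ as you argue.
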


It remains to extend this result to the spectral measures of the
corresponding translational dynamical system, where we expect
the spectral measure of maximal type to be singular as well, with
a trivial pure point part due to the constant eigenfunction.

\section*{Appendix: The (skew) Kronecker product algebra}

The structure of the correlation measures relies on some properties of
the Kronecker product matrices
\begin{equation}\label{eq:def-A}
   \bs{A} (k) \, \defeq \, B(k) \otimes \overline{B(k)} \ts ,
\end{equation}
defined for $k\in\RR$. Obviously, one has
$\overline{\bs{A} (k)} = \bs{A} (-k)$ and
$\det (\bs{A} (k)) = \lvert \det (B(k))\rvert^{2 \aaa}$.

In view of the structure of Eq.~\eqref{eq:def-A}, let us now consider
the $\RR$-algebra $\bs{\cA}$ that is generated by the matrix family
$\{ \bs{A} (k) \mid k\in\RR \}$.  Due to the Kronecker product
structure, $\bs{\cA}$ fails to be irreducible, no matter what the
structure of the IDA $\cB$ is. Let us explore this in some more
detail. Let $V=\CC^{\aaa}$ and consider $W\defeq V \otimes^{}_{\CC} V$,
the (complex) tensor product, which is a vector space over $\CC$ of
dimension $\aaa^{2}$, but also one over $\RR$, then of dimension
$2\ts \aaa^{2}$. In the latter setting, consider the involution
$C \! : \, W \!\longrightarrow W$ defined by
\[
      x \otimes y \; \longmapsto \; C (x \otimes y)
      \, \defeq \, \overline{y \otimes x} \, = \,
      \bar{y} \otimes \bar{x}
\]
together with its unique extension to an $\RR$-linear mapping on $W$.
Observe that there is no \mbox{$\CC$-linear} extension, because
$C \bigl(a \ts (x \otimes y)\bigr) = \bar{a}\; C (x\otimes y)$ for
$a \in\CC$. With this definition of $C$, one finds for an arbitrary
$k\in\RR$ that
\[
\begin{split}
  \bs{A} (k)\, C(x\otimes y) \, & = \, 
   \bigl(B(k) \otimes \,\overline{\! B(k)\! }\, \bigr)
   ( \bar{y} \otimes \bar{x} ) \, = \, 
    \bigl(B(k)\ts \bar{y}\bigr) \otimes 
    \bigl(\, \overline{\! B(k)\ts  x \nts} \, \bigr) \\
   & = \, C \bigl(   \bigl(B(k) \otimes \, 
       \overline{\! B(k)\! }\, \bigr)
       (x \otimes y ) \bigr) \, = \, 
    C \bigl( \bs{A} (k) \, ( x \otimes y ) \bigr),
\end{split}
\]
so $C$ commutes with the linear map defined by $\bs{A} (k)$, for any
$k\in\RR$. The $\RR$-linear mapping $C$ has eigenvalues $\pm 1$ and is
diagonalisable, as follows from the unique splitting of an arbitrary
$w\in W$ as
$w = \frac{1}{2} \bigl( w + C(w)\bigr) + \frac{1}{2} \bigl( w -
C(w)\bigr)$.  So, our vector space splits as
$W \! = W_{\! +} \oplus W_{\! -}$ into \emph{real} vector spaces that
are eigenspaces of $C$. Their dimensions are
\[
   \dim^{}_{\RR} (W_{\! +}) \, = \,  \dim^{}_{\RR} (W_{\! -})
    \, = \, \aaa^{2}
\]
since $W_{\! -} = \ii \ts W_{\! +}$ with
$W_{\! +} \cap W_{\! -} = \{ 0 \}$.  It is thus clear that $W_{\! +}$
and $W_{\! -}$ are invariant (real) subspaces for the $\RR$-algebra
$\bs{\cA}$.

Observe next that we have
\[
   \bs{A} (k) \, = \, B(k) \otimes \overline{B (k)} \; = \!
   \sum_{x,y \in S_{T}}\! \ee^{2 \pi \ii k (x-y)} \, D_{x} \otimes D_{y}
   \; = \sum_{z\in \triangle_{T}}  \ee^{2 \pi \ii k z}\, F_{z}
\]
where $\triangle_{T} \defeq S_{T} - S_{T}$ is the Minkowski difference,
with $-\triangle_{T}=\triangle_{T}$, and
\begin{equation}\label{eq:F-def}
   F_{z} \; = \sum_{\substack{x,y \in S_{T} \\ x-y = z}} 
     D_{x} \otimes D_{y} \ts .
\end{equation}
In analogy to before, $F_{z} = F_{z'}$ is possible for $z\ne z'$.
For instance, if $z=x-y$ with $x\ne y$ and $D_{x} = D_{y}$, one
can get $F_{z} = F_{-z}$ if there is no other way to write $z$ as
a difference of two numbers in $S_{T}$.

For $a\in\CC$, one easily checks that
$C \circ (a F_{z}) = \bar{a} \ts C \circ F_{z} = \bar{a} F_{-z} \circ
C$, which implies $[C, \bs{A}(k)]=0$ for all $k\in\RR$, in line with
our previous derivation.  It is immediate that $\bs{\cA}$ is contained
in the \mbox{$\RR$-algebra} $\bs{\cA}_{F}$ that is generated by the
matrices $\{ F_{z}+F_{-z} \mid 0 \leqslant z\in\triangle_{T}\}$
together with
$\{ \ii\ts ( F_{z} - F_{-z}) \mid 0 \leqslant z\in\triangle_{T}\}$,
and an argument similar to the one previously used for $\cB$ shows
that $ \bs{\cA}_{F} \subseteq \bs{\cA}$, hence
$\bs{\cA} = \bs{\cA}_{F}$. Since
$\dim^{}_{\CC} (\cB) \leqslant \aaa^{2}$, and since we generate the
\emph{real} algebra only after taking the Kronecker product, one has
\[
    \dim^{}_{\RR} (\bs{\cA}) \, \leqslant \, \aaa^{4} \ts ,
\]
which is also clear from $\dim^{}_{\RR} (W_{\! +}) = \aaa^{2}$.
Moreover, one has the following result.

\begin{lemma}
  Let\/ $\varrho$ be a primitive inflation rule on an alphabet with
  $\aaa$ letters, and assume that the IDA\/ $\cB$ of\/ $\varrho$ is
  irreducible over\/ $\CC$.  Then, the induced\/ $\RR$-algebra\/
  $\bs{\cA}$ is isomorphic with\/ $\Mat (\aaa^2, \RR)$, and its action
  on the subspace\/ $W_{\! +}$ is irreducible as well, this time
  over\/ $\RR$.
\end{lemma}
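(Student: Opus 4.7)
The plan is to complexify and apply Burnside's theorem twice. Write $\bs{\cA}_\CC \defeq \bs{\cA} + \ii\ts \bs{\cA}$ for the complexification, viewed as a complex subalgebra of $\Mat(\aaa^2,\CC)$, and let $R \defeq \{A \in \Mat(\aaa^2,\CC) : CA = AC\}$ be the real form of this matrix algebra fixed by conjugation with $C$. Since every $\bs{A}(k)$ commutes with $C$ (as already verified in the excerpt), and real linear combinations and products of such operators again commute with $C$, one has $\bs{\cA} \subseteq R$. A direct computation using the anti-linearity of $C$ shows that $R \cap \ii R = \{0\}$ (an element of the intersection satisfies $A = CAC^{-1} = -A$, so $A = 0$), hence $\bs{\cA} \cap \ii\ts\bs{\cA} = \{0\}$ and $\dim_\RR \bs{\cA} = \dim_\CC \bs{\cA}_\CC$. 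Choosing any real basis of $W_{\!+}$, which is simultaneously a complex basis of $W$, identifies $R$ with $\Mat(\aaa^2,\RR)$ of real dimension $\aaa^4$. The lemma therefore reduces to proving $\bs{\cA}_\CC = \Mat(\aaa^2,\CC)$.

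For this equality, by Burnside it suffices to show that $\bs{\cA}_\CC$ acts irreducibly on $W = V \otimes \overline{V}$. The hypothesis, together with the Burnside remark recalled in the excerpt, gives $\cB = \Mat(\aaa,\CC)$. Identify $W \cong \Mat(\aaa,\CC)$ via the outer product $v \otimes \overline{w} \mapsto vw^{*}$; then the action of $\bs{A}(k)$ becomes $X \mapsto B(k)\ts X\ts B(k)^{*}$, and every element of $\bs{\cA}_\CC$ acts as a $\CC$-linear combination of sandwich operators $X \mapsto M\ts X\ts M^{*}$ with $M$ in the multiplicative semigroup $\mathcal{S}$ generated by $\{B(k) : k \in \RR\}$. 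The plan is to combine the polarization identity
\[
   M_1\ts X\ts M_2^{*} \, = \, \tfrac{1}{4}\sum_{j=0}^{3} \ii^{\ts j}
   \,(M_1 + \ii^{\ts j} M_2)\,X\,(M_1 + \ii^{\ts j} M_2)^{*}
\]
with the two facts that $\text{span}_\CC \mathcal{S} = \cB = \Mat(\aaa,\CC)$ and (from the excerpt) that the Fourier decomposition $\bs{A}(k) = \sum_{x,y} \ee^{2\pi\ii k(x-y)} D_x \otimes D_y$ already places all $F_z$ inside $\bs{\cA}_\CC$. Combining these with closure of $\bs{\cA}_\CC$ under products then promotes the diagonal sandwiches $M \otimes \overline{M}$ into the full family of cross terms $M_1 \otimes \overline{M_2}$ with $M_1, M_2 \in \cB$. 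Since for every nonzero $X \in \Mat(\aaa,\CC)$ the bilinear map $(M_1, M_2) \mapsto M_1\ts X\ts M_2^{*}$ from $\Mat(\aaa,\CC)^{2}$ onto $\Mat(\aaa,\CC)$ is surjective, this gives $\bs{\cA}_\CC \cdot X = W$, proving irreducibility and hence $\bs{\cA}_\CC = \Mat(\aaa^2,\CC)$ by Burnside.

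For the second claim, let $U \subseteq W_{\!+}$ be any $\bs{\cA}$-invariant real subspace. Since $U \cap \ii U \subseteq W_{\!+} \cap W_{\!-} = \{0\}$, the complex subspace $U \oplus \ii U \subseteq W$ is $\bs{\cA}_\CC$-invariant of real dimension $2\dim_\RR U$. By the already established equality $\bs{\cA}_\CC = \Mat(\aaa^2,\CC)$ and Schur's lemma, $U \oplus \ii U \in \{0,W\}$, so $\dim_\RR U \in \{0, \aaa^2\}$ and hence $U \in \{0, W_{\!+}\}$, as claimed. The main obstacle is the polarization step in the middle paragraph: although the displayed identity formally reduces cross terms to diagonal sandwiches, the generators of $\bs{\cA}_\CC$ directly supply only sandwiches by elements of the semigroup $\mathcal{S}$, whereas the sums $M_1 + \ii^{j} M_2$ that appear lie in the linear span $\cB$ but not in $\mathcal{S}$; the delicate point is thus to exploit the Fourier separation over $k$ together with the algebraic closure of $\bs{\cA}_\CC$ under products to progressively fabricate those off-diagonal sandwiches that the generators do not provide on the nose.
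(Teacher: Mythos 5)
Your overall architecture is sound and in places cleaner than the paper's: the identification of the $C$-commutant $R$ with $\Mat(\aaa^2,\RR)$ via a real basis of $W_{\! +}$, the observation that $\bs{\cA}\cap\ii\ts\bs{\cA}=\{0\}$, and the deduction of irreducibility on $W_{\! +}$ from $\bs{\cA}_{\CC}=\Mat(\aaa^2,\CC)$ are all correct. The problem is that the one claim carrying all the weight, namely $\bs{\cA}_{\CC}=\Mat(\aaa^2,\CC)$, is not actually proved. The polarisation identity you display runs in the wrong direction for your purposes: it expresses the cross sandwich $X\mapsto M_1 X M_2^{*}$ through diagonal sandwiches by the matrices $M_1+\ii^{\ts j}M_2$, but those sums lie only in the linear span $\cB$ and not in the multiplicative semigroup $\mathcal{S}$ generated by $\{B(k)\}$, which is all that the generators of $\bs{\cA}_{\CC}$ supply. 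You flag this yourself as ``the main obstacle'' and then offer only a plan (``progressively fabricate those off-diagonal sandwiches'') with no argument; that plan \emph{is} the mathematical content of the lemma, so the proof is incomplete.

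Moreover, the gap cannot be closed at the level of generality at which your middle paragraph operates. The implication ``$\mathrm{span}_{\CC}\,\mathcal{S}=\Mat(\aaa,\CC)$ implies $\mathrm{span}_{\CC}\{M\otimes\overline{M}\mid M\in\mathcal{S}\}=\Mat(\aaa^2,\CC)$'' is false for a general multiplicative semigroup: take $\mathcal{S}$ to be a group of unitary matrices spanning $\Mat(\aaa,\CC)$ (the image of an irreducible representation of a finite group, say); then every $U\otimes\overline{U}$ fixes the vector $\sum_i e_i\otimes\overline{e_i}$, i.e.\ the line $\CC\ts\one$ under your identification $W\cong\Mat(\aaa,\CC)$, so the span is a proper, reducible subalgebra. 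Any correct argument must therefore exploit structure of the Fourier matrices beyond irreducibility of $\cB$ --- for instance the relation $\overline{B(k)}=B(-k)$ and the fact that the extreme coefficients $F_{x_m}=D_{x_m}\otimes D_{0}$ and $F_{-x_m}=D_{0}\otimes D_{x_m}$ are pure tensors --- and your sketch never pins down how. For comparison, the paper avoids the sandwich picture altogether: it realises $\bs{\cA}$ as the fixed-point set of the $\RR$-algebra automorphism $\sigma\colon M\otimes N\mapsto\overline{N}\otimes\overline{M}$ of $\varGamma=\cB\otimes_{\CC}\cB\simeq\Mat(\aaa^2,\CC)$, exhibits an explicit real basis of that fixed algebra of dimension $\aaa^4$, identifies it with $\Mat(\aaa^2,\RR)$ by an inner automorphism, and obtains irreducibility on $W_{\! +}$ from central simplicity; the step you are missing corresponds there to verifying that $\bs{\cA}$ exhausts the fixed-point algebra.
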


\begin{proof}
  Here, $\cB$ irreducible over $\CC$ means $\cB = \Mat (\aaa , \CC)$.
  With $\varGamma \defeq \cB \otimes^{}_{\CC} \cB$, where
  $\otimes^{}_{\CC}$ denotes the tensor product over $\CC$, one has
  $\varGamma \simeq \Mat (\aaa^{2}, \CC)$ by standard
  arguments. Clearly, $\varGamma$ is a $\CC$-algebra of dimension
  $\aaa^{4}$, but also an $\RR$-algebra, then of dimension
  $2 \ts \aaa^{4}$.  Now, using the Kronecker product as
  representation of the tensor product,
  $M\otimes N \mapsto \overline{N} \otimes \overline{M}$ defines a
  mapping that has a unique extension to an automorphism $\sigma$ of
  $\varGamma$ as an $\RR$-algebra.

  Our $\RR$-algebra $\bs{\cA}$ consists of all fixed points of
  $\sigma$, so $\bs{\cA} = \{ Q \in \varGamma \mid \sigma (Q) = Q
  \}$. Employing the elementary matrices $E_{ij}$ from
  $\Mat (\aaa , \RR)$ together with
  $E_{ij,k\ell} \defeq E_{ik} \otimes E_{j \ell}$, we can give a basis of
  $\bs{\cA}$, seen as a vector space over $\RR$, by
\[
   \big\{ \tfrac{1}{2} (E_{ij,k\ell} + E_{k\ell,ij}) \mid 
   (i,j) \leqslant (k,\ell) \big\}
   \cup \big\{ \tfrac{\ii}{2} (E_{ij,k\ell} - E_{k\ell,ij})
   \mid (i,j) < (k,\ell) \big\} ,
\] 
where lexicographic ordering is used for the double indices.  Note
that the cardinalities are $\frac{1}{2} \aaa^{2} (\aaa^{2} + 1)$ and
$\frac{1}{2} \aaa^{2} (\aaa^{2} - 1)$, which add up to
$\dim^{}_{\RR} (\bs{\cA}) = \aaa^{4}$.

Next, observe that we can get $E_{ij,k\ell}$ and $E_{k\ell,ij}$ by a
simple (complex) linear combination of
$ \tfrac{1}{2} (E_{ij,k\ell} + E_{k\ell,ij})$ and
$\tfrac{\ii}{2} (E_{ij,k\ell} - E_{k\ell,ij})$, and vise versa. Put
together, this defines a (complex) inner automorphism of
$\varGamma$. Observing that
$\Mat (\aaa^{2}, \RR) = \{ Q \in \varGamma \mid \overline{Q} = Q \}$,
this construction can now be used to show that
$\bs{\cA} \simeq \Mat (\aaa^{2}, \RR)$, which is a central simple
algebra. Since $\bs{\cA} W_{\! +} \subseteq W_{\! +}$ and
$\dim^{}_{\RR} (W_{\! +}) = \aaa^{2}$, the claimed irreducibility over
the reals follows.
\end{proof}

Note that all $F_{z}$ are non-negative, integer matrices. They clearly
satisfy the relation
$\sum_{z\in\triangle_{T}} F_{z} = \bs{A} (0) = M_{\varrho} \otimes
M_{\varrho}$.  Moreover, under some mild conditions, the spectral
radius of $F^{}_0$ is $\lambda$, while the other matrices $F_z$ have
smaller spectral radius.

Let us come back to Eq.~\eqref{eq:def-A}, which implies
\[
    \| \bs{A} (k) \|^{}_{\mathrm{F}} \, = \,
    \| B (k) \|^{2}_{\mathrm{F}} \ts .
\]
If we consider the matrix cocycle defined by
$\bs{A}^{(n)} (k) = B^{(n)} (k) \otimes \overline{B^{(n)} (k)}$, it is
immediate that the maximal Lyapunov exponents, for all $k\in\RR$, are
related by
\begin{equation}\label{eq:A-versus-B}
   \chi^{\bs{A}} (k) \, = \, 2 \ts \chi^{B} (k) \ts ,
\end{equation}
which also holds for the higher-dimensional case with $k\in\RR^d$.
Clearly, one can now reformulate Theorems~\ref{thm:1D} and
\ref{thm:higher-D} in terms of $\chi^{\bs{A}}$.  In particular, one
has the following reformulation of Theorem~\ref{thm:geq} and
Corollary~\ref{coro:ac-cond} and their higher-dimensional analogues.

\begin{coro}
  Let\/ $\varrho$ be a primitive inflation rule in\/ $\RR^d$ with
  finitely many translational prototiles and expansive map\/
  $Q$. Let\/ $B^{(n)} (.)$ be its Fourier matrix cocycle, with\/
  $\det(B(k))\ne 0$ for at least one\/ $k\in\RR^d$, and\/
  $\bs{A}^{(n)} (.) = B^{(n)} (.) \otimes \overline{B^{(n)} (.)}$ the
  corresponding Kronecker product cocycle. If the diffraction measure
  of the hull defined by\/ $\varrho$ contains a non-trivial absolutely
  continuous component, one has\/
  $\chi^{\bs{A}} (k) = \log\ts\ts \lvert \det(Q) \rvert$ for a subset
  of\/ $\RR^d$ of positive measure, which has full measure when\/
  $\chi^{\bs{A}} (k)$ is constant for a.e.\ $k\in\RR^d$.  \qed
\end{coro}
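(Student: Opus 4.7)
The plan is to obtain this statement as a direct transcription of the results already established for the Fourier matrix cocycle $B^{(n)}$, via the pointwise identity $\chi^{\bs{A}}(k) = 2\ts\chi^{B}(k)$ recorded in \eqref{eq:A-versus-B}. Since \eqref{eq:A-versus-B} holds for every $k\in\RR^d$ (it merely reflects the multiplicativity of the Frobenius norm under Kronecker products, $\|M\otimes \overline{M}\|^{}_{\mathrm{F}} = \|M\|^{2}_{\mathrm{F}}$, combined with the limsup defining the exponents), no almost-everywhere bookkeeping is lost in the translation, and the proof will essentially consist of rescaling thresholds and the subset on which equality holds.

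First I would invoke the upper bound $\chi^{B}(k)\leqslant \log\sqrt{\lvert\det(Q)\rvert}$ valid for a.e.\ $k\in\RR^d$, which is Theorem~\ref{thm:geq} in one dimension and Corollary~\ref{coro:higher-D} in the general case. Next, I would apply the contrapositive of the Theorem~\ref{thm:1D}/Theorem~\ref{thm:higher-D} criterion, as packaged in Corollary~\ref{coro:ac-cond} and its higher-dimensional analogue: the presence of a non-trivial absolutely continuous diffraction component precludes a uniform strict inequality $\chi^{B}(k)\leqslant \log\sqrt{\lvert\det(Q)\rvert} - \varepsilon$ on a set of full measure. Combined with the universal upper bound, this forces $\chi^{B}(k) = \log\sqrt{\lvert\det(Q)\rvert}$ on a subset of $\RR^d$ of positive measure.

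Now I would apply \eqref{eq:A-versus-B} pointwise on this subset, obtaining
\[
   \chi^{\bs{A}}(k) \, = \, 2\ts\chi^{B}(k) \, = \,
   2\ts\log\sqrt{\lvert\det(Q)\rvert} \, = \, \log\ts\lvert\det(Q)\rvert
\]
on the very same positive-measure set, which is the first claim. For the almost-sure constancy statement, if $\chi^{\bs{A}}(k)$ equals a constant $c$ for a.e.\ $k$, then by \eqref{eq:A-versus-B} the same is true of $\chi^{B}(k)$, with a.e.\ value $c/2$; since this a.e.\ value is at most $\log\sqrt{\lvert\det(Q)\rvert}$ by the upper bound and attains that value on a set of positive measure by the previous step, it must actually equal $\log\sqrt{\lvert\det(Q)\rvert}$, so $c = \log\lvert\det(Q)\rvert$ a.e.

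I do not anticipate any serious obstacle; the proof is essentially a one-step reformulation. The only point deserving care is to confirm that $\chi^{\bs{A}}$ and $2\chi^{B}$ really agree pointwise and not merely almost everywhere, so that the subset of positive measure transports faithfully between the two formulations. This is guaranteed by the Kronecker identity for the Frobenius norm mentioned above, which applies uniformly in $k$ and thus commutes with the $\limsup$ in the definition of the maximal Lyapunov exponent.
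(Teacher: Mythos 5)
Your proposal is correct and is exactly the argument the paper intends: the corollary is stated as an immediate reformulation of Theorem~\ref{thm:geq}, Corollary~\ref{coro:ac-cond} and their higher-dimensional analogues via the pointwise identity $\chi^{\bs{A}}(k)=2\ts\chi^{B}(k)$ from \eqref{eq:A-versus-B}, which is precisely your route. The care you take about the identity holding pointwise (via $\|\bs{A}^{(n)}(k)\|^{}_{\mathrm{F}}=\|B^{(n)}(k)\|^{2}_{\mathrm{F}}$) is the right and only subtlety.
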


\section*{Acknowledgements}

It is a pleasure to thank Frederic Alberti, Alan Bartlett, Scott
Balchin, Natalie Frank, Uwe Grimm, Andrew Hubery, Robbie Robinson,
Boris Solomyak and Nicolae Strungaru for helpful discussions. We also
thank two anonymous reviewers for their thoughtful comments.  This
work was supported by the German Research Foundation (DFG), within the
CRC 1283.

\bigskip

\end{document}